\numberwithin{equation}{section}
\newtheorem{thm}[equation]{Theorem}
\newtheorem{lem}[equation]{Lemma}
\newtheorem{prop}[equation]{Proposition}
\newtheorem{cor}[equation]{Corollary}
\newtheorem{question}[equation]{Question}
\theoremstyle{definition}
\theoremstyle{remark}
\newtheorem{rem}[equation]{Remark}
\newtheorem{example}[equation]{Example}
\newcommand{\id}{\operatorname{id}}
\newcommand{\im}{\operatorname{im}}
\newcommand{\Hom}{\operatorname{Hom}}
\newcommand{\Ext}{\operatorname{Ext}}
\newcommand{\FP}{\operatorname{FP}}
\newcommand{\colim}{\operatorname{colim}}
\newcommand{\pd}{\operatorname{pd}}
\newcommand{\cd}{\operatorname{cd}}
\newcommand{\chac}{\operatorname{char}}
\newcommand{\Ind}{\operatorname{Ind}}
\newcommand{\cls}{{\scriptstyle\bf S}}
\newcommand{\cll}{{\scriptstyle\bf L}}
\newcommand{\clh}{{\scriptstyle\bf H}}
\newcommand{\cllh}{{\scriptstyle\bf LH}}
\begin{document}

\title{On Profinite Groups of Type $\FP_\infty$}
\address{Department of Mathematics\\
Royal Holloway, University of London\\
Egham\\
Surrey TW20 0EX\\
UK}
\email{Ged.CorobCook.2012@live.rhul.ac.uk}
\thanks{This work was done as part of the author's PhD at Royal Holloway, University of London, supervised by Brita Nucinkis.}
\subjclass[2010]{Primary 20E18; Secondary 20J06}
\keywords{profinite group, cohomology, permutation module, soluble, finiteness condition, finite rank}

\begin{abstract}
Suppose $R$ is a profinite ring. We construct a large class of profinite groups $\widehat{\cll'\clh_R}\mathfrak{F}$, including all soluble profinite groups and profinite groups of finite cohomological dimension over $R$. We show that, if $G \in \widehat{\cll'\clh_R}\mathfrak{F}$ is of type $\FP_\infty$ over $R$, then there is some $n$ such that $H_R^n(G,R \llbracket G \rrbracket) \neq 0$, and deduce that torsion-free soluble pro-$p$ groups of type $\FP_\infty$ over $\mathbb{Z}_p$ have finite rank, thus answering the torsion-free case of a conjecture of Kropholler.
\end{abstract}

\maketitle

%
%

\section*{Introduction}

Recall that, given a ring $R$, a group $G$ is said to be of type $\FP_n$ over $R$ if there is a projective resolution $P_\ast$ of $R$ as an $R[G]$-module with trivial $G$-action for which $P_0, \ldots, P_n$ are finitely generated. We use a similar definition for $R$ and $G$ profinite, except that we work in the category of profinite $R \llbracket G \rrbracket$-modules and continuous homomorphisms.

In this paper we are interested in groups of type $\FP_\infty$. In the abstract case several results are already known: an important example is Kropholler's work \cite{Krop}, which shows for a large class of torsion-free groups, including all torsion-free soluble groups, that being of type $\FP_\infty$ over $R$ implies finite cohomological dimension over $R$. As far as we know, no result of this kind has previously been known for profinite groups.

In the same spirit as \cite{Krop}, we define closure operations on classes $\mathfrak{X}$ of profinite groups to construct a larger class $\widehat{\cll'\clh_R}\mathfrak{X}$, containing all profinite soluble groups, with the property that the cohomology of $\widehat{\cll'\clh_R}\mathfrak{X}$-groups relates in a manageable way to that of their $\mathfrak{X}$-subgroups. In particular, taking $\mathfrak{X}=\mathfrak{F}$, the class of finite groups with the discrete topology, we can show that if $G \in \widehat{\cll'\clh_R}\mathfrak{F}$ and $M$ is an $R \llbracket G \rrbracket$-module of type $\FP_\infty$ which is projective as an $R$-module, then there is some $n$ such that $\Ext_{R \llbracket G \rrbracket}^n(M,M \hat{\otimes}_R R \llbracket G \rrbracket) \neq 0$. Setting $M=R$ gives the result.

We get particularly nice conclusions when the group $G$ in question is torsion-free soluble pro-$p$, when some group-theoretic work along the lines of \cite[Section 2]{Krop3} shows that $G$ must have finite rank. Thus we can answer in the affirmative the torsion-free case of \cite[Open Question 6.12.1]{R-Z}, attributed to Kropholler.

Most of the paper is spent on developing the disparate elements needed to make the main result work. Some of the machinery we need is an adaptation of tools that already exist for abstract groups, but that have never been explicitly carried over before. In particular, in Section \ref{progrm} we investigate colimits of systems of profinite modules, which will arise in Sections \ref{typeIsys} and \ref{typeIIsys}. The existence of these colimits is an immediate consequence of showing that the category of profinite $R$-modules, for a topological ring $R$, is a reflective subcategory of the category of topological $R$-modules, using the usual profinite completion functor. We have been unable to find any source stating this.

Section \ref{spermmod} develops the theory of signed permutation modules: permutation modules whose $G$-action has an extra sign introduced by the process of taking tensor-induced complexes, as in \cite[7.3]{Scheiderer}. This difficulty is avoided for abstract groups by looking at cellular group actions on finite dimensional contractible complexes, which give exact sequences of (unsigned) permutation modules; in lieu of a suitable profinite analogue for these, we take a purely algebraic approach, which is slightly harder to control. Thus we need to show in particular that signed permutation modules are preserved by the constructions we will use, that is, induction and tensor-induction.

At last, in Section \ref{hierpg}, we can define the class of profinite groups that our main results will hold for. There are two operations that expand our class: $\cll'$ and $\clh$. The strategy is to apply these two alternately, infinitely many times by transfinite induction: thus groups which we can show are in the class will be ones which admit a sufficiently nice hierarchical decomposition. Thus we can show that soluble profinite groups, and elementary amenable groups of order coprime to $2$, are in the class by using their hierarchical definitions.

Sections \ref{typeIsys} and \ref{typeIIsys} deal with one of $\cll'$ and $\clh$ each: in either case, the aim is to overcome the obstacle that when the first variable is of type $\FP_\infty$, the $\Ext$-groups do not in general commute with direct limits in the second variable. Instead we show that, in the two specific situations that arise, cohomology behaves just nicely enough for our purposes. See below for how `nicely' is `nicely enough'. We put all these pieces together in Section \ref{main} to obtain the results claimed on cohomology.

In Section \ref{tdmod} we recall the work of Boggi in \cite{Boggi}, which provides some useful machinery for dealing with profinite cohomology with profinite coefficients, which allows us to deduce some group-theoretic properties in the soluble pro-$p$ case in Section \ref{solgp}.

\section{Profinite Groups, Rings and Modules}
\label{progrm}

See \cite{R-Z} or \cite{Wilson} for this background on profinite groups and profinite modules and \cite{Weibel} for background on homological algebra.

Before starting on the main body of the paper, we must establish one important fact: that categories of profinite spaces, groups and modules have all small colimits. Explicitly, these colimits are the profinite completions of the topological colimits of systems of profinite spaces/groups/modules, considered by inclusion as topological ones.

We start by fixing notation. $Set$, $Grp$, $Ab$, $Rng$ and $Mod(R)$ will be the categories of sets, groups, abelian groups, rings and left $R$-modules respectively, for $R \in Rng$. $Top$, $TGrp$, $TAb$, $TRng$ and $TMod(R)$ will be the categories of topological spaces, topological groups, topological abelian groups, topological rings and topological left $R$-modules for $R \in TRng$ respectively. Similarly $Pro$, $PGrp$, $PAb$, $PRng$ and $PMod(R)$ will be profinite spaces, profinite groups, profinite abelian groups, profinite rings and profinite left $R$-modules, for $R \in TRng$ -- we do not require $R \in PRng$. In this paper, modules will always be left modules. We also define $U$ to be the forgetful functor on each of $Top$, $TGrp$, $TAb$, $TRng$, $TMod(R)$, $Pro$, $PGrp$, $PAb$, $PRng$ and $PMod(R)$ which forgets the topology, but keeps the algebraic structure; in particular, for $R \in TRng$, $U$ sends elements of $TMod(R)$ and $PMod(R)$ to $Mod(U(R))$, rather than $Mod(R)$.

We now develop some properties of the topological categories that will help us to develop properties of the corresponding profinite categories.

\begin{lem}
\label{invtop}
Suppose $C$ is $Top$, $TGrp$, $TAb$, $TRng$ or $TMod(R)$ for $R \in TRng$. Suppose $A \in U(C)$, $B \in C$, and we have a morphism $f:A \rightarrow U(B)$ in $U(C)$. Then the collection of open sets $$T=\{f^{-1}(O): O \subseteq_{open} B\}$$ is a topology on $A$ which makes $A$ into an element of $C$.
\end{lem}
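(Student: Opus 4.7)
The plan is to separate the statement into two parts: first verifying that $T$ is a topology on $A$ (a purely set-theoretic check), and then, case by case on $C$, verifying that the algebraic operations on $A$ are continuous with respect to $T$ using only the facts that $f$ commutes with these operations and that the operations on $B$ are continuous.

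For the first part, since $f$ is a (set-theoretic) map $A \to U(B)$, preimage commutes with arbitrary unions and finite intersections, and $f^{-1}(\emptyset) = \emptyset$, $f^{-1}(B) = A$. So $T$ is the initial topology induced by $f$, and in particular $f\colon (A,T) \to B$ is continuous by construction.

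The main step is the algebraic one. The crucial auxiliary observation I will isolate first is that if $A$ carries the initial topology from $f\colon A \to B$, then the product topology on $A \times A$ agrees with the initial topology induced by $f \times f\colon A \times A \to B \times B$. This is a short diagram chase: a subbasic open set for the product topology on $A \times A$ has the form $\pi_i^{-1}(f^{-1}(V_i)) = (f\times f)^{-1}(\pi_i^{-1}(V_i))$ with $V_i \subseteq_{open} B$, and these are exactly $(f \times f)^{-1}$ applied to a subbasis of the product topology on $B \times B$. With this observation in hand, each case is a one-line check: for a binary operation $m_B\colon B \times B \to B$ continuous and an open $O \subseteq B$,
\[
m_A^{-1}(f^{-1}(O)) \;=\; (f \times f)^{-1}(m_B^{-1}(O)),
\]
which is open in $A \times A$ by the observation, using that $f \circ m_A = m_B \circ (f \times f)$ since $f$ is a morphism in $U(C)$. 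The same argument handles inversion in $TGrp$ (a unary operation, so even simpler, no product topology needed), addition and negation in $TAb$, addition, negation and multiplication in $TRng$. For $TMod(R)$ with $R \in TRng$, the scalar multiplication $\mu_A\colon R \times A \to A$ requires the analogous observation that the product topology on $R \times A$ equals the initial topology from $\id_R \times f\colon R \times A \to R \times B$, which follows by the same diagram chase; then $\mu_A^{-1}(f^{-1}(O)) = (\id_R \times f)^{-1}(\mu_B^{-1}(O))$ is open, using $f(ra) = r f(a)$.

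There is no serious obstacle here; the only subtle point is the compatibility between initial topologies and product topologies, which I have flagged above. Once that is recorded, the rest of the proof is a uniform template applied to each algebraic operation in turn, and the case $C = Top$ requires nothing beyond the first part.
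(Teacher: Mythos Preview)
Your proposal is correct and follows essentially the same approach as the paper: both verify that $T$ is a topology and then, case by case, check continuity of each algebraic operation via the identity $m_A^{-1}(f^{-1}(O)) = (f\times f)^{-1}(m_B^{-1}(O))$ (and its analogues for inversion and scalar multiplication). The only difference is that you explicitly isolate the compatibility of the initial topology with products, which the paper leaves implicit when asserting that $(f\times f)^{-1}m_B^{-1}(O)$ and $(\id_R\times f)^{-1}s_B^{-1}(O)$ are open.
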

\begin{proof}
This is easy for $C=Top$.

For $C=TGrp$ or $TAb$: We write $A_T$ for $A$ endowed with topology $T$. We need to check that the multiplication map $$m_A: A_T \times A_T \rightarrow A_T$$ and the inversion map $$i_A: A_T \rightarrow A_T$$ are continuous. Given an open set $f^{-1}(O)$, $$m_A^{-1}f^{-1}(O) = (f \times f)^{-1}m_B^{-1}(O)$$ is open, and $$i_A^{-1}f^{-1}(O) = f^{-1}i_B^{-1}(O)$$ is open, as required.

For $C=TRng$: We need to check in addition to the $C=TAb$ case that multiplication is continuous. This holds in the same way as the continuity of $m$ in the $C=TGrp$ case.

For $C=TMod(R)$ for $R \in TRng$: We need to check in addition to the $C=TAb$ case that scalar multiplication $$s_A: R \times A_T \rightarrow A_T$$ is continuous. Given an open set $f^{-1}(O)$, $$s_A^{-1}f^{-1}(O) = (id_{R} \times f)^{-1}s_B^{-1}(O)$$ is open, as required.
\end{proof}

\begin{prop}
\label{limandcolim}
The categories $Top$, $TGrp$, $TAb$, $TRng$ and $TMod(R)$ for $R \in TRng$ have all small
\begin{enumerate}[(i)]
\item limits;
\item colimits.
\end{enumerate}
\end{prop}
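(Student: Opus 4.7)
The plan is to handle (i) and (ii) separately, since limits transfer along the forgetful functor $U$ more readily than colimits. For (i), it suffices to construct products and equalizers. For a family $\{B_j\}_{j \in J}$ in $C$, take the product topology on $\prod U(B_j)$; the componentwise algebraic operations are continuous by a standard argument via the universal property of the product topology, so this is the product in $C$. For the equalizer of $f,g: A \to B$ in $C$, take the algebraic equalizer $E = \{a \in A : f(a) = g(a)\}$ and apply Lemma \ref{invtop} to the inclusion $E \hookrightarrow U(A)$; the resulting (subspace) topology makes $E$ an object of $C$, and the universal property follows by a routine check. General limits are then built from products and equalizers in the usual way.

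For (ii), given a diagram $F: J \to C$, first form the algebraic colimit $K$ in $U(C)$ together with canonical maps $q_j: U(F(j)) \to K$. Consider the collection of topologies $\tau$ on $K$ such that $(K, \tau) \in C$ and every $q_j$ is continuous; call such a $\tau$ \emph{admissible}. This collection is nonempty (the indiscrete topology works). The key technical step is that the supremum $\tau^*$ of any family $\{\tau_i\}$ of admissible topologies is again admissible. To see this, $\bigcup_i \tau_i$ is a subbase for $\tau^*$; for any $U$ in this subbase, say $U \in \tau_i$, the preimage $q_j^{-1}(U)$ is open in $F(j)$ by admissibility of $\tau_i$, and the preimage of $U$ under any algebraic operation on $K$ (or on $R \times K$, for $TMod(R)$) is open in the $\tau_i$-product topology, hence in the $\tau^*$-product topology, since the latter refines the former. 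Since preimages commute with unions and finite intersections, continuity extends to all open sets of $\tau^*$. Let $\tau^*$ now denote the supremum of all admissible topologies on $K$.

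To verify $(K, \tau^*)$ is the colimit in $C$, take any cocone $\{g_j: F(j) \to B\}$ in $C$; the universal property in $U(C)$ yields a unique algebraic morphism $g: K \to U(B)$, and applying Lemma \ref{invtop} to $g$ produces a topology $T$ on $K$ with $(K, T) \in C$. Each composite $g q_j = g_j$ is continuous, forcing each $q_j: F(j) \to (K, T)$ to be continuous, so $T$ is admissible and thus $T \subseteq \tau^*$. This makes $g: (K, \tau^*) \to B$ continuous. The main obstacle here is the verification that the supremum of admissible topologies remains admissible; once this subbase argument is in hand, Lemma \ref{invtop} does the rest symmetrically in both (i) and (ii).
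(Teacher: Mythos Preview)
Your proposal is correct and follows essentially the same approach as the paper. For (ii) both arguments form the algebraic colimit, consider the poset of admissible topologies, show the supremum is again admissible via a subbase argument, and then invoke Lemma~\ref{invtop} for the universal property; for (i) your products-plus-equalizers decomposition is a minor reorganisation of the paper's direct subobject-of-the-product construction, and your use of Lemma~\ref{invtop} to equip the equalizer with the subspace topology is a small extra touch not present in the paper.
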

\begin{proof}
\begin{enumerate}[(i)]
\item It is easy to check that products in any of these categories are given by endowing the product in $Set$, $Grp$, $Ab$, $Rng$ or $Mod(U(R))$ respectively with the product topology. Given a functor $F: I \rightarrow C$, where $I$ is a small category and $C$ is any of $Top$, $TGrp$, $TAb$, $TRng$ and $TMod(R)$, take the product in $C$ of the objects $F(i)$ such that $i \in I$. Now take the subobject consisting of tuples $(x_i) \in \prod_I F(i)$ such that for every morphism $f: i \rightarrow j$ in $I$ $F(f)(x_i)=x_j$, endowed with the subspace topology: one can check that this is the limit of $F$.
\item We will start from the well-known fact that $Set$, $Grp$, $Ab$, $Rng$ and $Mod(U(R))$ have all small colimits. Given a functor $F: I \rightarrow C$, where $I$ is a small category and $C$ is any of $Top$, $TGrp$, $TAb$, $TRng$ and $TMod(R)$, we can think of $F$ as an object of $C^I$, and apply the forgetful functor $U: C^I \rightarrow U(C)^I$. We know $U(F) \in U(C)^I$ has a colimit.

For $C=Top$: Let $S$ be the set of topologies on $\colim_I U^I(F)$ making all the canonical maps $$\phi_i: F(i) \rightarrow \colim_I U^I(F)$$ continuous for each $i \in I$.

For $C=TGrp$ or $TAb$: Let $S$ be the set of topologies on $\colim_I U^I(F)$ such that it is a topological group making all the canonical maps $$\phi_i: F(i) \rightarrow \colim_I U^I F(I)$$ continuous for each $i \in I$.

For $C=TRng$: Let $S$ be the set of topologies on $\colim_I U^I(F)$ such that it is a topological ring making all the canonical maps $$\phi_i: F(i) \rightarrow \colim_I U^I F(I)$$ continuous for each $i \in I$.

For $C=TMod(R)$: Let $S$ be the set of topologies on $\colim_I U^I(F)$ such that it is a topological $R$-module making all the canonical maps $$\phi_i: F(i) \rightarrow \colim_I U^I F(I)$$ continuous for each $i \in I$.

In each case we can make $S$ into a poset ordered by the fineness of the topology. Write $(\colim_I U^I(F))_{T'}$ for $\colim_I U^I(F)$ endowed with a topology $T' \in S$. By Lemma \ref{invtop} and the universal property, in each case, if $S$ has a maximal element $T$ finer than all other topologies in $S$, $(\colim_I U^I(F))_T$ is the colimit we are looking for. Now $S \neq \varnothing$ since it contains the indiscrete topology. Define $T$ to be the topology generated by the subbase $\bigcup_S T'$. We claim $T \in S$: then we will be done.

For $C=Top$: We need to check $\phi_i^{-1}(O)$ is open in $F(i)$ for each $i$ and $O$ open in $(\colim_I U^I(F))_T$. It is sufficient to check this when $O$ is in the subbase, and so open in $(\colim_I U^I(F))_{T'}$ for some $T'$, where this is clear.

For $C=TGrp$ or $TAb$: We need to check in addition to the $C=Top$ case that $(\colim_I U^I(F))_T$ is a topological group, that is, that the multiplication map $$m: (\colim_I U^I(F))_T \times (\colim_I U^I(F))_T \rightarrow (\colim_I U^I(F))_T$$ and the inversion map $$i: (\colim_I U^I(F))_T \rightarrow (\colim_I U^I(F))_T$$ are continuous. It suffices to check the inverse images of each open set $O$ in the subbase; if $O$ is open in $(\colim_I U^I(F))_{T'}$, its inverse image under $m$ is open in $$(\colim_I U^I(F))_{T'} \times (\colim_I U^I(F))_{T'},$$ hence open in $$(\colim_I U^I(F))_T \times (\colim_I U^I(F))_T;$$ its inverse image under $i$ is open in $(\colim_I U^I(F))_{T'}$, and hence it is open in $(\colim_I U^I(F))_T$.

For $C=TRng$: We need to check in addition to the $C=TAb$ case that multiplication is continuous. This holds in the same way as the continuity of $m$ in the $C=TGrp$ case.

For $C=TMod(R)$: We need to check in addition to the $C=TAb$ case that scalar multiplication $$s: R \times (\colim_I U^I(F))_T \rightarrow (\colim_I U^I(F))_T$$ is continuous. It suffices to check the inverse images of each open set $O$ in the subbase; if $O$ is open in $(\colim_I U^I(F))_{T'}$, then its inverse image is open in $R \times (\colim_I U^I(F))_{T'}$, so open in $R \times (\colim_I U^I(F))_T$.
\end{enumerate}
\end{proof}

We observe that, given a small category $I$ and a functor $F$ from $I$ to any of $Top$, $TGrp$, $TAb$, $TRng$ and $TMod(R)$, $U \colim_I F(I) = \colim_I U F(I)$ -- this can be seen from the construction of colimits in these categories.

To see the existence of similar profinite colimits, and to see that they are the profinite completions of the corresponding topological colimits, it is sufficient to show that $Pro$ is a reflective subcategory of $Top$, $PGrp$ is a reflective subcategory of $TGrp$, $PRng$ is a reflective subcategory of $TRng$, and $PMod(R)$ is a reflective subcategory of $TMod(R)$, for $R \in PRng$. Then the result follows, by \cite[Theorem 2.6.10]{Weibel}. Finally, observe that it follows from general properties of adjoint functors that profinite colimits in $PAb$ and $PMod(R)$ are right-exact, by \cite[Exercise 2.6.4]{Weibel}.

So we define a profinite completion functor on $TGrp$. The cases of $Top$, $TRng$ and $TMod(R)$ are sufficiently similar that we can safely leave them in the hands of the reader. Given $G \in TGrp$, let $N = \{U \unlhd_{\text{clopen}} G: \lvert G:U \rvert < \infty\}$. Then the profinite completion of $G$, $\hat{G}$, is the inverse limit, in $TGrp$, of the discrete finite quotients $\varprojlim_{U \in N} G/U$. It is easy to see that $\hat{}$ is a functor $TGrp \rightarrow TGrp$, by the universal property of inverse limits, and that we get a canonical morphism $\iota: G \rightarrow \hat{G}$ in $TGrp$. By the definition, $\hat{} \circ \hat{} = \hat{}$, and then we can define $PGrp$ to be the full subcategory of $TGrp$ on which $\hat{}$ is naturally isomorphic to the identity, so that in fact $\hat{}$ is a functor $TGrp \rightarrow PGrp$.

For the following lemma, for clarity, we write $t$ for the inclusion functor $PGrp \rightarrow TGrp$.

\begin{lem}
\label{pcadj}
Profinite completion satisfies the following universal property: suppose $G \in TGrp$, $H \in PGrp$. Suppose $f: G \rightarrow t(H)$ is a morphism in $TGrp$. Then $f$ factors uniquely as $$G \xrightarrow{\iota} t(\hat{G}) \xrightarrow{\hat{f}} t(H).$$ Equivalently, $\Hom_{TGrp}(G,t(H)) \cong \Hom_{PGrp}(\hat{G},H)$.
\end{lem}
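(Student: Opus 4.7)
The plan is to construct $\hat{f}$ from the universal properties of the two inverse limits defining $\hat{G}$ and $H$, and then verify the uniqueness clause by a density argument. The enabling observation is that pulling back clopen normal subgroups of finite index under $f$ stays in the system $N$ used to build $\hat{G}$.

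More precisely, I would first check that if $V \unlhd_{\text{clopen}} H$ has $[H:V] < \infty$, then $f^{-1}(V) \in N$: continuity of $f$ yields openness of $f^{-1}(V)$ (and an open subgroup of a topological group is automatically clopen), while the first isomorphism theorem embeds $G/f^{-1}(V)$ into the finite group $H/V$, pinning down its index. The induced map $G/f^{-1}(V) \to H/V$ between discrete finite groups is then automatically continuous, and factors $f$ through the canonical projection $G \xrightarrow{\iota} t(\hat{G}) \to G/f^{-1}(V)$.

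Next, I would assemble the resulting maps $\hat{G} \to G/f^{-1}(V) \to H/V$, indexed over all such $V$, using the universal property of $t(H) = \varprojlim_V H/V$ in $TGrp$ to obtain a continuous homomorphism $\hat{f}: \hat{G} \to t(H)$ with $\hat{f} \circ \iota = f$. Since $\hat{G} \in PGrp$, this $\hat{f}$ is already a morphism in $PGrp$, giving the required factorisation; the equivalent adjunction statement then follows, with $\iota$ playing the role of the unit.

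For uniqueness I would use that $\iota(G) \subseteq \hat{G}$ surjects onto every finite quotient $G/U$ with $U \in N$, so it meets every basic open set in the inverse-limit topology and is therefore dense. Since $H$ is Hausdorff (all profinite groups are), any two continuous homomorphisms $\hat{G} \to t(H)$ agreeing on $\iota(G)$ must coincide everywhere, forcing $\hat{f}$ to be unique. I do not foresee any genuine obstacle: this is the standard left-adjoint-to-an-inclusion argument, and the only mildly delicate ingredient — that $f^{-1}(V)$ really is clopen of finite index — is handled by the two observations above.
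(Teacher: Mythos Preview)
Your proposal is correct and is essentially the same approach as the paper's: the paper's proof consists of the single sentence ``This follows from the universal property of inverse limits once more, exactly as in the proof of \cite[Lemma 3.2.1]{R-Z}'', and what you have written is precisely that standard argument spelled out in full --- pulling back the clopen normal subgroups of $H$ to land in $N$, assembling the induced maps via the universal property of $H = \varprojlim_V H/V$, and using density of $\iota(G)$ for uniqueness.
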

\begin{proof}
This follows from the universal property of inverse limits once more, exactly as in the proof of \cite[Lemma 3.2.1]{R-Z}.
\end{proof}

This shows that $\hat{}$ is left adjoint to $t$, and hence that $PGrp$ is a reflective subcategory of $TGrp$. Similar statements apply to $Pro$, $PRng$ and $PMod(R)$. Putting all these facts together, we get the following result.

\begin{cor}
\label{pccomp}
$Pro$, $PGrp$, $PRng$ and $PMod(R)$ have all small colimits, and the colimit of a diagram in any of these categories is the profinite completion of the colimit of the same diagram in $Top$, $TGrp$, $TRng$ or $TMod(R)$, respectively.
\end{cor}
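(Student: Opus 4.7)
The plan is to combine two ingredients that are essentially in place: the existence of small colimits in the topological categories from Proposition \ref{limandcolim}, and the adjunction between profinite completion and inclusion from Lemma \ref{pcadj} together with the analogous statements for $Pro$, $PRng$, $PMod(R)$ promised in the discussion. First I would spell out those analogues explicitly: for each of $Top$, $TRng$ and $TMod(R)$, define a profinite completion functor by taking the inverse limit of discrete finite quotients (as sets, rings, or $R$-modules) exactly as the author does for $TGrp$, and verify the universal property of Lemma \ref{pcadj} by the same inverse-limit argument. This makes $Pro$, $PGrp$, $PRng$ and $PMod(R)$ reflective subcategories of their topological counterparts, with reflector $\hat{\phantom{x}}$ left adjoint to the inclusion $t$.

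With that in hand, the corollary is a formal consequence of the fact that a left adjoint preserves colimits, which is the content of \cite[Theorem 2.6.10]{Weibel}. Concretely, given a small diagram $F: I \rightarrow PGrp$, write $\tilde{F} = t \circ F : I \rightarrow TGrp$; by Proposition \ref{limandcolim}, $\colim_I \tilde{F}$ exists in $TGrp$, and I would show that $\widehat{\colim_I \tilde{F}}$ represents the colimit of $F$ in $PGrp$ by running the chain
\[
\Hom_{PGrp}\bigl(\widehat{\colim_I \tilde{F}}, H\bigr) \cong \Hom_{TGrp}\bigl(\colim_I \tilde{F}, t(H)\bigr) \cong \lim_I \Hom_{TGrp}\bigl(\tilde{F}(i), t(H)\bigr) \cong \lim_I \Hom_{PGrp}\bigl(F(i), H\bigr),
\]
invoking in turn Lemma \ref{pcadj}, the universal property of the colimit in $TGrp$, and the full faithfulness of $t$. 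By Yoneda this identifies $\widehat{\colim_I \tilde{F}}$ as the colimit of $F$ in $PGrp$, which also verifies the second half of the corollary's statement: profinite colimits are the profinite completions of topological ones. The same argument applies word-for-word to $Pro$, $PRng$ and $PMod(R)$.

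Since everything reduces to the soft categorical principle that left adjoints commute with colimits, there is no real technical obstacle. The only mildly fiddly piece of work is the unwritten verification that profinite completion is defined and left adjoint to inclusion in the space, ring and module cases; but those arguments are entirely parallel to the $TGrp$ case already carried out, and, as the author remarks, can safely be left to the reader.
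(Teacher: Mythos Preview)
Your proposal is correct and is essentially the same argument as the paper's: both deduce the corollary from the adjunction of Lemma \ref{pcadj} (and its analogues) together with the general fact that reflective subcategories inherit colimits via the reflector, which is \cite[Theorem 2.6.10]{Weibel}. The paper simply cites these two ingredients, while you have unwound the Yoneda argument explicitly.
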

\begin{proof}
This follows immediately from Lemma \ref{pcadj} and \cite[Theorem 2.6.10]{Weibel}.
\end{proof}

\section{Signed Permutation Modules}
\label{spermmod}

From now on, $R$ will be a commutative profinite ring, $\Lambda$ a profinite $R$-algebra.

Suppose $G$ is a profinite group. Write $G \text{-} Top$ for the category of topological $G$-spaces and $G \text{-} Pro$ for the category of profinite $G$-spaces. We write elements of $G \text{-} Pro$ as $(X,\alpha)$, where $X$ is the underlying space and $\alpha:G \times X \rightarrow X$ is the action; where this is clear we may just write $X$. Now pick $X \in G \text{-} Pro$. Then the action of $G$ on $X$ induces an action of $R \llbracket G \rrbracket$ on $R \llbracket X \rrbracket$, by the universal property of group rings, making $R \llbracket X \rrbracket$ an $R \llbracket G \rrbracket$-module. We call modules with this form permutation modules, and we call the orbits and stabilisers of $G$ acting on $X$ the orbits and stabilisers of $R \llbracket X \rrbracket$. Permutation modules satisfy the following universal property: given an $R \llbracket G \rrbracket$ permutation module $R \llbracket X \rrbracket$, any continuous $G$-map from $X$ to an $R \llbracket G \rrbracket$-module $M$ factors as $X \rightarrow R \llbracket X \rrbracket \rightarrow M$ for a unique continuous $R \llbracket G \rrbracket$-homomorphism $R \llbracket X \rrbracket \rightarrow M$, where $X \rightarrow R \llbracket X \rrbracket$ is the canonical $G$-map. This can be seen by first restricting $R \llbracket X \rrbracket$ and $M$ to $PMod(R)$, making $R \llbracket X \rrbracket$ a free $R$-module, and then noting that continuous $R$-homomorphisms $R \llbracket X \rrbracket \rightarrow M$ are continuous $R \llbracket G \rrbracket$-homomorphisms if and only if they are compatible with the $G$-action. For later, we note that this universal property can be expressed by the formula in the following lemma.

\begin{lem}
\label{C_G}
Write $C_G(X,M)$ for the set of continuous $G$-maps $X \rightarrow M$. Make $C_G(X,M)$ into a $U(R)$-module by the map $r \cdot f = rf$; in other words, $(r \cdot f)(x) = r \cdot (f(x))$. Then (as $U(R)$-modules) $\Hom_{R \llbracket G \rrbracket}(R \llbracket X \rrbracket,M) \cong C_G(X,M)$.
\end{lem}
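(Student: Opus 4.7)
The plan is to read off the isomorphism directly from the universal property of $R\llbracket X \rrbracket$ just stated. Let $\eta: X \to R\llbracket X \rrbracket$ be the canonical $G$-map. I would define
\[
\Phi: \Hom_{R \llbracket G \rrbracket}(R \llbracket X \rrbracket, M) \longrightarrow C_G(X, M), \qquad \Phi(\phi) = \phi \circ \eta,
\]
and note that $\phi \circ \eta$ is continuous (composition of continuous maps) and $G$-equivariant (since $\phi$ is $R\llbracket G \rrbracket$-linear and $\eta$ is a $G$-map), so it lands in $C_G(X,M)$. In the other direction, the universal property supplies, for every $f \in C_G(X,M)$, a unique continuous $R\llbracket G \rrbracket$-homomorphism $\tilde f: R\llbracket X \rrbracket \to M$ with $\tilde f \circ \eta = f$; setting $\Psi(f) = \tilde f$ gives a well-defined map in the opposite direction.

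Next I would verify $\Phi$ and $\Psi$ are mutually inverse. That $\Phi \circ \Psi = \id$ is the identity $\tilde f \circ \eta = f$. For $\Psi \circ \Phi = \id$, given $\phi$, both $\phi$ itself and $\Psi(\Phi(\phi)) = \widetilde{\phi \circ \eta}$ are continuous $R\llbracket G \rrbracket$-homomorphisms $R\llbracket X \rrbracket \to M$ whose precomposition with $\eta$ equals $\phi \circ \eta$, so they agree by the uniqueness clause of the universal property.

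It remains to check that $\Phi$ is $U(R)$-linear, which is immediate from the pointwise definition: for $\phi_1, \phi_2 \in \Hom_{R \llbracket G \rrbracket}(R \llbracket X \rrbracket, M)$ and $r \in U(R)$,
\[
\Phi(\phi_1 + \phi_2)(x) = (\phi_1 + \phi_2)(\eta(x)) = \phi_1(\eta(x)) + \phi_2(\eta(x)) = (\Phi(\phi_1) + \Phi(\phi_2))(x),
\]
and $\Phi(r \cdot \phi)(x) = r \cdot \phi(\eta(x)) = (r \cdot \Phi(\phi))(x)$, using that $M$ is an $R$-module so that scalar multiplication commutes with $\phi$ as an $R\llbracket G \rrbracket$-, hence $R$-, homomorphism.

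There is no real obstacle here; the statement is essentially a repackaging of the universal property in the sentence just before the lemma. The only mild point worth being careful about is ensuring that scalar multiplication on $C_G(X,M)$ is compatible with the scalar multiplication used on $\Hom_{R \llbracket G \rrbracket}(R \llbracket X \rrbracket, M)$, which is why the $U(R)$-module structure on $C_G(X,M)$ was specified explicitly in the statement.
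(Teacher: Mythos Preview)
Your proposal is correct and follows essentially the same approach as the paper: the paper's proof simply says the set bijection is a restatement of the universal property and that the $U(R)$-module structures agree by the definition of the $r$-action, and you have spelled out those two steps in detail.
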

\begin{proof}
That $\Hom_{R \llbracket G \rrbracket}(R \llbracket X \rrbracket,M)$ and $C_G(X,M)$ are isomorphic as sets is simply a restatement of the universal property. That they have the same $U(R)$-module structure is clear from the definition of multiplication by $r$.
\end{proof}

Signed permutation modules are $R \llbracket G \rrbracket$-modules which as $R$-modules are free with basis $X$, and whose $G$-action comes from a continuous action of $G$ on $X \cup -X \subset R \llbracket X \rrbracket$ such that $g \cdot -x = -(g \cdot x)$ for all $g \in G, x \in X \cup -X$; the terminology appears in \cite[Definition 5.1]{Symonds}, though in fact the definitions are slightly different: both this definition and that of \cite{Symonds} are attempts to deal with the `twist' by a sign that appears in the tensor-induced complexes of \cite[7.3]{Scheiderer}. The reason for the change is that our definition seems to be needed for Lemma \ref{indsperm}.

In the same way as for permutation modules, one can see that signed permutation modules satisfy the property that any continuous $G$-map $f$ from $X \cup -X$ to an $R \llbracket G \rrbracket$-module $M$ such that $f(-x)=-f(x)$ for all $x$ extends uniquely to a continuous $R \llbracket G \rrbracket$-homomorphism $R \llbracket X \rrbracket \rightarrow M$.

For this paragraph, assume $\chac R \neq 2$. Now suppose $P$ is a signed permutation module of the form $R \llbracket X \rrbracket$. Write $\overline{X}$ for the quotient $G$-space of $X \sqcup -X$ given by $x \sim -x$, and $\sim$ for the map $X \sqcup -X \rightarrow \overline{X}$. Then we make the convention that when we talk about the $G$-stabilisers of $P$, we will always mean the $G$-stabilisers of $\overline{X}$, and the $G$-orbits of $P$ will always mean the preimages in $X$ of the $G$-orbits of $\overline{X}$.

If on the other hand $\chac R = 2$, signed permutation modules are just permutation modules. So here the $G$-stabilisers of $R \llbracket X \rrbracket$ will be the $G$-stabilisers of $X$ and the $G$-orbits will be the $G$-orbits of $X$. We also define, for $\chac R = 2$, $\overline{X} = X$ and $\sim = \id_X$. We use the notation that $G$ acts on $X \cup -X$ to cover both cases.

We now need to establish some basic properties of signed permutation modules. The following lemma is an adaptation of \cite[Lemma 5.6.4(a)]{R-Z}.

\begin{lem}
\label{algstruct}
Suppose $R \llbracket X \rrbracket$ is a signed permutation module. Then $X \cup -X = \varprojlim (X_i \cup -X_i)$, where the $X_i \cup -X_i$ are finite quotients of $X \cup -X$ as $G$-spaces such that the map $X \cup -X \rightarrow X_i \cup -X_i$ sends $X$ to $X_i$ and $-X$ to $-X_i$. Thus $$R \llbracket X \rrbracket = \varprojlim_{PMod(R \llbracket G \rrbracket)} R[X_i] = \varprojlim_{PMod(R \llbracket G \rrbracket)} R_j[X_i],$$ where the $R_j$ are the finite quotients of $R$. We say that such quotients $R_j[X_i]$ of $R \llbracket X \rrbracket$ preserve the algebraic structure.
\end{lem}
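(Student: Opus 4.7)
My plan is to adapt the standard identification (R--Z Lemma 5.6.4(a)) of a profinite $G$-space with the inverse limit of its finite $G$-quotients, taking extra care to preserve the signed structure. The case $\chac R = 2$ reduces immediately to that result since $X \cup -X = X$, so I assume $\chac R \neq 2$ throughout. Then $X \cap -X = \varnothing$ because $X$ is an $R$-basis and $2 \neq 0$ in $R$, and multiplication by $-1$ is a continuous involution of $R \llbracket X \rrbracket$ exchanging the closed subspaces $X$ and $-X$, so both are clopen in $X \cup -X$. Consequently $\sigma : a \mapsto -a$ acts continuously and fixed-point-freely on $X \cup -X$ and commutes with $G$.

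The goal is then a cofinal family of clopen equivalence relations on $X \cup -X$ that are (i) $G$-invariant, (ii) $\sigma$-invariant, and (iii) never identify any element of $X$ with any element of $-X$; condition (iii) automatically makes $\sigma$ descend fixed-point-freely and causes each quotient to split as $X_i \cup -X_i$ with $X \to X_i$ and $-X \to -X_i$. The key technical input is the continuous sign cocycle $\epsilon : G \times \overline{X} \to \{\pm 1\}$ defined by $g \cdot s(\bar{x}) = \epsilon(g,\bar{x})\, s(g \bar{x})$, where $s : \overline{X} \to X$ is the tautological section. Because $\epsilon^{-1}(+1)$ is clopen in the profinite product $G \times \overline{X}$, it is a finite union of clopen rectangles, so $\epsilon$ factors through $G/N \times \overline{X}/E$ for some open normal $N \lhd G$ and some clopen equivalence relation $E$ on $\overline{X}$. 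Given any clopen $G$-invariant equivalence $\sim_0$ on $\overline{X}$, I refine to a $G$-invariant clopen $\sim_1 \subseteq \sim_0 \wedge E$ (the further refinement for $G$-invariance involves intersecting with only finitely many $G$-translates of $E$, since $\overline{X}/E$ is finite), and then lift $\sim_1$ to $X \cup -X$ by declaring $a \sim b$ iff the images of $a, b$ in $\overline{X}$ are $\sim_1$-equivalent and $a, b$ lie on the same side of the $X, -X$ partition. Routine checks --- invoking the factorization of $\epsilon$ precisely where $G$-invariance is needed --- confirm (i)-(iii), and cofinality is immediate: distinct $\sigma$-orbits are separated in $\overline{X}$ by $\sim_1$, while $a$ and $-a$ are separated by the sign partition itself.

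Once $X \cup -X = \varprojlim (X_i \cup -X_i)$ is in hand, condition (iii) immediately yields $X = \varprojlim X_i$ as profinite spaces; then $R \llbracket X \rrbracket = \varprojlim R[X_i]$ follows from the defining property of the free profinite $R$-module on the profinite set $X$, and the further equality $R \llbracket X \rrbracket = \varprojlim R_j[X_i]$ comes from $R = \varprojlim_j R_j$ combined with finiteness of the $X_i$. Compatibility of the $R \llbracket G \rrbracket$-module structures on each side is automatic from the construction. I expect the main obstacle to be arranging condition (iii) cofinally: the sign cocycle $\epsilon$ is not $G$-invariant in any transparent way, so it is only the factorization of $\epsilon$ through a finite quotient of $G \times \overline{X}$ that ultimately forces the partition $X, -X$ to remain visible in sufficiently many finite signed quotients.
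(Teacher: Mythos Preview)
Your proposal is correct, but it takes a considerably more elaborate route than the paper. You pass to $\overline{X}$, introduce the continuous sign cocycle $\epsilon$, factor it through a finite quotient, and then lift back to $X \cup -X$; the paper instead works directly on $X \cup -X$ throughout and never needs $\epsilon$ at all. The paper considers the set $S$ of clopen equivalence relations $\mathcal{R}$ on $X \cup -X$ satisfying (a) $\mathcal{R} \subseteq (X \times X) \cup (-X \times -X)$ and (b) $(x,y) \in \mathcal{R} \Leftrightarrow (-x,-y) \in \mathcal{R}$; these are exactly the relations whose quotient has the form $X_i \cup -X_i$, so $S$ is automatically cofinal because $X = \varprojlim X_i$. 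Then for any $\mathcal{R} \in S$ the paper simply sets $\mathcal{R}' = \bigcap_{g \in G} g\mathcal{R}$, which is clopen by the usual R--Z argument. Condition (a) for $\mathcal{R}'$ is trivial since $\mathcal{R}' \subseteq \mathcal{R}$, and condition (b) follows from the one-line computation $(x,y) \in g\mathcal{R} \Leftrightarrow (g^{-1}x,g^{-1}y) \in \mathcal{R} \Leftrightarrow (-g^{-1}x,-g^{-1}y) \in \mathcal{R} \Leftrightarrow (-x,-y) \in g\mathcal{R}$, using only that $g \cdot (-x) = -(g \cdot x)$. So the obstacle you anticipated --- that the sign cocycle is not transparently $G$-invariant --- dissolves: the individual translates $g\mathcal{R}$ need not lie in $S$, but their intersection does, for the cheapest possible reasons.

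What your approach buys is a more explicit description of the finite signed quotients in terms of data on $\overline{X}$ and the cocycle $\epsilon$, which could be useful elsewhere; what the paper's approach buys is brevity and the avoidance of any auxiliary constructions. For the final step, the paper also argues the inverse-limit identity $R \llbracket X \rrbracket = \varprojlim R_j[X_i]$ via the universal property of signed permutation modules (every map $X \cup -X \to M$ with $f(-x) = -f(x)$ to a finite module factors through some $X_i \cup -X_i$), which handles the $R \llbracket G \rrbracket$-module structure in one stroke rather than first checking $R$-modules and then compatibility.
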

\begin{proof}
If $\chac R = 2$, we are done. Assume $\chac R \neq 2$.

Consider the set $S$ of clopen equivalence relations $\mathcal{R}$ on $X \cup -X$ such that, considered as a subset of $(X \cup -X) \times (X \cup -X)$, $$\mathcal{R} \subseteq (X \times X) \cup (-X \times -X)$$ and $$(x,y) \in \mathcal{R} \Leftrightarrow (-x,-y) \in \mathcal{R}.$$ In other words, an equivalence relation $\mathcal{R} \in S$ is one which does not identify anything in $X$ with anything in $-X$ and identifies two elements in $-X$ whenever it identifies the corresponding two elements of $X$; then $\mathcal{R} \in S$ if and only if $(X \cup -X)/\mathcal{R}$ has the form $X_i \cup -X_i$ for some finite quotient $X_i$ of $X$ (as profinite spaces, not profinite $G$-spaces). Since $X = \varprojlim_{Pro} X_i$, $$X \cup -X = \varprojlim_{Pro} X_i \cup -X_i = \varprojlim_{Pro,S} (X \cup -X)/\mathcal{R}.$$ We want to show that for every $\mathcal{R} \in S$ there is some $\mathcal{R}' \subseteq \mathcal{R}$ which is $G$-invariant: then it follows that $$X \cup -X = \varprojlim_{G \text{-} Pro,\{\mathcal{R} \in S: \mathcal{R} \text{ is } G \text{-invariant}\}} (X \cup -X)/\mathcal{R}$$ by \cite[Lemma 1.1.9]{R-Z}, because $\{\mathcal{R} \in S: \mathcal{R} \text{ is } G \text{-invariant}\}$ is cofinal in $S$, and all these quotients are quotients as $G$-spaces.

So suppose $\mathcal{R} \in S$ and define $\mathcal{R}' = \bigcap_{g \in G} g\mathcal{R}$, where $$g\mathcal{R} = \{(gx,gy) \in (X \cup -X) \times (X \cup -X): (x,y) \in \mathcal{R}\}.$$ Now we see in exactly the same way as the proof of \cite[Lemma 5.6.4(a)]{R-Z} that $\mathcal{R}'$ is clopen; clearly $\mathcal{R}'$ is $G$-invariant, and $\mathcal{R}' \in S$ because $$\mathcal{R}' \subseteq \mathcal{R} \subseteq (X \times X) \cup (-X \times -X)$$ and
\begin{align*}
(x,y) \in \mathcal{R}' &\Leftrightarrow (x,y) \in g\mathcal{R}, \forall g \\
&\Leftrightarrow (g^{-1}x,g^{-1}y) \in \mathcal{R}, \forall g \\
&\Leftrightarrow (-g^{-1}x,-g^{-1}y) \in \mathcal{R}, \forall g \\
&\Leftrightarrow (-x,-y) \in g\mathcal{R}, \forall g \\
&\Leftrightarrow (-x,-y) \in \mathcal{R}'.
\end{align*}

It follows that $R \llbracket X \rrbracket = \varprojlim_{PMod(R \llbracket G \rrbracket)} R_j[X_i]$ because every continuous $G$-map $f$ from $X \cup -X$ to a finite $R \llbracket G \rrbracket$-module $M$ such that $f(-x)=-f(x)$ for all $x$ factors through some quotient $G$-space $X_i \cup -X_i$, and clearly the induced map $f': X_i \cup -X_i \rightarrow M$ satisfies $f'(-x)=-f'(x)$, so every morphism $R \llbracket X \rrbracket \rightarrow M$ factors through some $R[X_i]$ by the universal property of signed permutation modules, and hence through some $R_j[X_i]$.
\end{proof}

\begin{lem}
\label{freeperm}
Suppose $R \llbracket X \rrbracket$ is a signed $R \llbracket G \rrbracket$ permutation module, and that $G$ acts freely on $\overline{X}$. Then $R \llbracket X \rrbracket$ is free.
\end{lem}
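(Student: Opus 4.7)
The plan is to exhibit $R \llbracket X \rrbracket$ as $R \llbracket G \rrbracket \hat\otimes_R R \llbracket Y \rrbracket$, the free $R \llbracket G \rrbracket$-module on a profinite set $Y$ of orbit representatives. If $\chac R = 2$, the notions of signed and ordinary permutation module agree and $\overline{X} = X$, so the statement reduces to the classical fact that a free profinite $G$-action on $X$ admits a continuous section $s : Y \to X$ of the orbit map $X \to Y := G \backslash X$ (see \cite{R-Z}); the induced $G$-equivariant homeomorphism $G \times Y \cong X$, $(g,y) \mapsto g \cdot s(y)$, yields the required isomorphism of modules.

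For $\chac R \neq 2$, set $Y = G \backslash \overline{X}$ and choose a continuous section $s : Y \to \overline{X}$ of the orbit map using the free $G$-action on $\overline{X}$. Since $X \cap -X = \varnothing$, the map $X \hookrightarrow X \cup -X \to \overline{X}$ is a homeomorphism, so $s$ lifts uniquely to a continuous $\sigma : Y \to X$. The continuous $G$-equivariant map $\phi : G \times Y \to X \cup -X$, $(g,y) \mapsto g \cdot \sigma(y)$, extends $R$-linearly (via the free profinite $R$-module structure on $R \llbracket G \rrbracket \hat\otimes_R R \llbracket Y \rrbracket$) to an $R \llbracket G \rrbracket$-linear map
$$\psi : R \llbracket G \rrbracket \hat\otimes_R R \llbracket Y \rrbracket \to R \llbracket X \rrbracket.$$

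It remains to check $\psi$ is bijective as a map of $R$-modules. Both sides are free $R$-modules on profinite bases ($G \times Y$ and $X$ respectively). Using the clopen decomposition $X \cup -X = X \sqcup -X$, define a continuous sign $\epsilon : G \times Y \to \{\pm 1\}$ by $\epsilon(g, y) = +1$ if $\phi(g, y) \in X$ and $-1$ otherwise; then $\tilde\phi(g, y) := \epsilon(g, y) \phi(g, y)$ takes values in $X$, and its composition with the homeomorphism $X \cong \overline{X}$ is $(g, y) \mapsto g \cdot s(y)$. The latter is a homeomorphism by freeness of the $G$-action on $\overline{X}$, so $\tilde\phi : G \times Y \to X$ is a homeomorphism. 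Thus $\psi$ factors as the $R$-module isomorphism induced by $\tilde\phi$ followed by the $R$-module automorphism of $R \llbracket X \rrbracket$ multiplying each $x \in X$ by $\epsilon(\tilde\phi^{-1}(x))$, and is therefore itself an isomorphism. The main technical input is the splitting of free profinite $G$-spaces, cited from \cite{R-Z}; the main bookkeeping obstacle is the sign twist in characteristic $\neq 2$, cleanly encoded by the sign function $\epsilon$.
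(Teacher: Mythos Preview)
Your proof is correct and follows essentially the same approach as the paper's: both invoke the splitting of free profinite $G$-spaces from \cite{R-Z} to choose orbit representatives $Y \subset X$, then produce an $R\llbracket G\rrbracket$-isomorphism between the free module on $Y$ and $R\llbracket X\rrbracket$. The only cosmetic difference is that the paper verifies bijectivity by showing the doubled map $G \times Y \cup -(G \times Y) \to X \cup -X$ is a bijection, whereas you package the same information via the continuous sign function $\epsilon$ and the untwisted homeomorphism $\tilde\phi$.
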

\begin{proof}
If $\chac R = 2$, we are done. Assume $\chac R \neq 2$.

As profinite $G$-spaces, $\overline{X} = G \times \overline{Y}$ for some $\overline{Y}$ on which $G$ acts trivially by \cite[Corollary 5.6.6]{R-Z}; take the preimage $Y$ of $\overline{Y}$ in $X$. Then we want to show $R \llbracket X \rrbracket$ is a free $R \llbracket G \rrbracket$-module with basis $Y$. Now $G$ acts freely on $G \times Y$, so by the universal property of free $R$-modules it is enough to show that $X \cup -X = G \times Y \cup -(G \times Y)$ as topological spaces. The inclusion $Y \rightarrow X \cup -X$ gives a continuous map $$G \times Y \rightarrow G \times (X \cup -X) \xrightarrow{\cdot} X \cup -X$$ and similarly for $-(G \times Y)$, after multiplying by $-1$. Hence we get a continuous map $G \times Y \cup -(G \times Y) \rightarrow X \cup -X$ which is bijective by the choice of $Y$, so the two are homeomorphic because they are compact and Hausdorff.
\end{proof}

Permutation modules behave nicely with respect to induction of modules; we want to show the same is true of signed permutation modules.

We first recall the definition of induction: on $H$-spaces, for $H \leq G$, we define $\Ind^G_H$ by the universal property that, given $X \in H \text{-} Pro$, $X' \in G \text{-} Pro$ and a continuous map $f:X \rightarrow X'$ as $H$-spaces, $f$ factors uniquely through a map $f':\Ind^G_HX \rightarrow X'$ of $G$-spaces. Clearly $\Ind^G_HX$ is unique up to isomorphism. In addition this property makes $\Ind^G_H$ a functor in the obvious way. Analogously, given $A \in PMod(R \llbracket H \rrbracket), B \in PMod(R \llbracket G \rrbracket)$, $\Ind^G_H$ is defined by the universal property that a morphism $f:A \rightarrow B$ in $PMod(R \llbracket H \rrbracket)$ factors uniquely through $f':\Ind^G_HA \rightarrow B$ in $PMod(R \llbracket G \rrbracket)$.

Recall also that, given $H \leq G$, it is possible to choose a closed left transversal $T$ of $H$ by \cite[Proposition 1.3.2]{Wilson}. In other words, $T$ is a closed subset of $G$ containing exactly one element of each left coset of $H$ in $G$. By \cite[Proposition 1.3.4]{Wilson} we then have a homeomorphism $G \cong T \times H$ as spaces.

\begin{lem}
\label{indsperm}
Let $H \leq G$, and suppose $R \llbracket X \rrbracket \in PMod(R \llbracket H \rrbracket)$ is a signed permutation module. Then $\Ind^G_H R \llbracket X \rrbracket$ is a signed permutation module in $PMod(R \llbracket G \rrbracket)$.
\end{lem}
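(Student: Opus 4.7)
The plan is to reduce to the $\chac R\neq 2$ case (the $\chac R=2$ case being induction of an ordinary permutation module, which is known to be a permutation module) and to build the required signed structure on the induced object directly from the one on $X \cup -X$, then verify the universal property.

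First I would pick a closed left transversal $T$ of $H$ in $G$, so that $G \cong T \times H$ as topological spaces. Set $Y = X \cup -X \in H\text{-}Pro$ and form the induced $G$-space $\Ind^G_H Y$; as a space this is $T \times Y$, with $G$-action given by the usual formula ($g\cdot(t,y) = (t',hy)$ where $gt = t'h$ with $t' \in T$, $h\in H$). The map $y \mapsto -y$ on $Y$ is $H$-equivariant by the signed permutation hypothesis, so it induces a continuous map $\sigma: \Ind^G_H Y \to \Ind^G_H Y$ of $G$-spaces sending $(t,y)$ to $(t,-y)$. I would then set $X' = T \times X$, so that as a profinite $G$-space $\Ind^G_H Y = X' \cup -X'$ with $-x' := \sigma(x')$, and check that the identity $g\cdot(-x') = -(g\cdot x')$ holds (it is inherited from the same identity on $Y$, since the $H$-component of the action is the one doing the $\pm$-twisting).

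Next I would claim that $R\llbracket X'\rrbracket$, viewed as a signed permutation $R\llbracket G \rrbracket$-module via the data just constructed, is isomorphic to $\Ind^G_H R\llbracket X \rrbracket$. For this I check the universal property of induction: given any $M \in PMod(R \llbracket G \rrbracket)$, the signed permutation universal property gives
\[
\Hom_{R\llbracket G\rrbracket}(R\llbracket X'\rrbracket,M) \;\cong\; \{f\in C_G(X'\cup -X',M):f(-x')=-f(x')\}.
\]
Restriction along the inclusion $\{1\}\times Y \hookrightarrow T\times Y$ and reassembly via the $G$-action (using $G\cong T\times H$) identifies the right-hand side bijectively with the corresponding set of sign-preserving continuous $H$-maps $Y\to M$; the signed permutation universal property for $R\llbracket X\rrbracket$ then identifies this with $\Hom_{R\llbracket H\rrbracket}(R\llbracket X\rrbracket, M)$. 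Since this bijection is natural in $M$, the universal property characterising $\Ind^G_H$ shows $\Ind^G_H R\llbracket X\rrbracket \cong R\llbracket X'\rrbracket$ as $R\llbracket G\rrbracket$-modules, whence it is a signed permutation module.

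The main obstacle I expect is the bookkeeping required to check that $\sigma$ is $G$-equivariant and that its fixed-point-free quotient really recovers a signed permutation structure on the induced module (rather than, say, a permutation structure on a larger underlying space). This is precisely the subtlety that motivates the revised definition of signed permutation module: one must verify that the decomposition $\Ind^G_H Y = X' \cup -X'$ is stable under $G$ even though $X'$ individually is only $H$-stable in the fibrewise sense, and that the sign involution descends compatibly with $\sim$ to give $\overline{X'} = T \times \overline{X}$ as a $G$-space. Once this is in place, invoking the universal property finishes the argument without further calculation.
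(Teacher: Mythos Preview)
Your proposal is correct and follows essentially the same route as the paper: choose a closed left transversal $T$, realise $\Ind^G_H(X\cup -X)$ as $T\times(X\cup -X)$ with the standard induced $G$-action, set $X'=T\times X$, and verify via the universal property that $R\llbracket X'\rrbracket\cong\Ind^G_H R\llbracket X\rrbracket$ as a signed permutation module. The only cosmetic difference is that the paper builds the extension $f'(t,x)=t\cdot f(x)$ of a given $H$-map $f$ by hand and checks $f'(t,-x)=-f'(t,x)$ directly (together with an explicit continuity check for the induced action), whereas you phrase the same verification as a natural bijection of $\Hom$-sets; the content is identical.
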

\begin{proof}
Assume $\chac R \neq 2$; otherwise we are done.

We know that $X \cup -X \in H \text{-} Pro$, and any composite map $f: X \cup -X \rightarrow R \llbracket X \rrbracket \rightarrow M$ for $M \in PMod(R \llbracket G \rrbracket)$ satisfies $f(-x)=-f(x)$ for all $x \in X$.

Now $\Ind^G_H (X \cup -X)$ can be constructed in the following way: choose a closed left transversal $T$ of $H$ in $G$ and take the space $T \times (X \cup -X)$ with the product topology. Every element of $G$ can be written uniquely in the form $th$ with $t \in T, h \in H$. So given $g \in G, t \in T$, write $gt$ in the form $t'h$, $t' \in T, h \in H$ and define $g \cdot (t,x) = (t',h \cdot x)$. This gives an abstract group action on $T \times (X \cup -X)$ because, if $g_2t=t'h_2$ and $g_1t'=t''h_1$, for $g_1, g_2 \in G, t,t',t'' \in T, h_1, h_2 \in H$, $g_1g_2t=t''h_1h_2$ and hence $$g_1 \cdot (g_2 \cdot (t,x)) = g_1 \cdot (t',h_2 \cdot x) = (t'',h_1 \cdot(h_2 \cdot x)) = (t'',(h_1h_2)\cdot x) = (g_1g_2) \cdot (t,x);$$ to see the action is continuous, note that we can write the action as the following composite of continuous maps:
\begin{align*}
G \times T \times (X \cup -X) &\xrightarrow{m \times \id} G \times (X \cup -X) \\
&\xrightarrow{\theta \times \id} T \times H \times (X \cup -X) \\
&\xrightarrow{\id \times \alpha} T \times (X \cup -X).
\end{align*}
Here $m$ is multiplication in $G$, $\theta$ is the homeomorphism $G \rightarrow T \times H$, and $\alpha$ is the $H$-action on $X \cup -X$. We claim that the space $T \times (X \cup -X)$, with this $G$-action, satisfies the universal property to be $\Ind^G_H (X \cup -X)$, where the canonical map $X \cup -X \rightarrow \Ind^G_H (X \cup -X)$ is given by $x \mapsto (1,x)$. Indeed, given $M \in PMod(R \llbracket G \rrbracket)$ and a continuous map $$f: X \cup -X \rightarrow M$$ of $H$-spaces such that $f(-x)=-f(x)$ for all $x \in X$, define $$f': T \times (X \cup -X) \rightarrow M, f':(t,x) \mapsto t \cdot f(x):$$ this is a $G$-map because, for $gt=t'h$, $g \in G, t, t' \in T, h \in H$, $$f'(g \cdot (t,x)) = f'(t',h \cdot x) = t' \cdot f(h \cdot x) = (t'h) \cdot f(x) = g \cdot (t \cdot f(x)).$$ The uniqueness of this choice of map is clear. Moreover, we have $$f'(t,-x) = t \cdot f(-x) = t \cdot (-f(x)) = -(t \cdot f(x)) = -f'(t,x),$$ and hence by the universal property of signed permutation modules $f'$ extends uniquely to a morphism $R \llbracket T \times X \rrbracket \rightarrow M$, where $R \llbracket T \times X \rrbracket$ is the signed permutation module with the $G$-action on $T \times X \cup -(T \times X)$ given by the $G$-action on $T \times (X \cup -X)$. By the universal property of induced modules this $R \llbracket T \times X \rrbracket$ is $\Ind^G_H R \llbracket X \rrbracket$.
\end{proof}

If $R \llbracket X \rrbracket$ is a signed $R \llbracket G \rrbracket$ permutation module with $\overline{X} \cong G/H$ as $G$-spaces, we may write $R \llbracket X \rrbracket = R \llbracket G/H;\sigma \rrbracket$, where $\sigma$ is the $G$-action on $X \cup -X$, with the understanding that $G$ acts on $G/H \cup -G/H$, for each $g \in G, tH \in G/H$, by either $\sigma(g,tH) = gtH$ or $\sigma(g,tH) = -gtH$ (and similarly for $-tH \in -G/H$). When there is no ambiguity we may simply write $R \llbracket G/H \rrbracket$ for this. In particular, each element of $H$ acts on the cosets $1H \cup -1H$ by multiplication by $\pm 1$, giving a continuous homomorphism $\varepsilon: H \rightarrow \{\pm 1\}$, which we will refer to as the twist homomorphism of $R \llbracket G/H; \sigma \rrbracket$.

\begin{lem}
\label{twist}
Write $R'$ for a copy of $R$ on which $H$ acts by $h \cdot r = \varepsilon(h)r$. Then we have $\Ind^G_H R' = R \llbracket G/H;\sigma \rrbracket$.
\end{lem}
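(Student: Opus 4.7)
The plan is to show that both $\Ind^G_H R'$ and $R \llbracket G/H; \sigma \rrbracket$ represent the same functor on $PMod(R \llbracket G \rrbracket)$, then conclude by Yoneda. Write $M^\varepsilon = \{m \in M : hm = \varepsilon(h)m \text{ for all } h \in H\}$; I aim to produce natural bijections $\Hom_{R \llbracket G \rrbracket}(\Ind^G_H R', M) \cong M^\varepsilon \cong \Hom_{R \llbracket G \rrbracket}(R \llbracket G/H; \sigma \rrbracket, M)$ for $M \in PMod(R \llbracket G \rrbracket)$.

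On the induction side, the universal property of $\Ind^G_H$ gives $\Hom_{R \llbracket G \rrbracket}(\Ind^G_H R', M) \cong \Hom_{R \llbracket H \rrbracket}(R', M)$, and an $R \llbracket H \rrbracket$-homomorphism $\phi \colon R' \to M$ is determined by $m := \phi(1)$, subject precisely to the condition $hm = \varepsilon(h)m$ for all $h \in H$; thus this set is canonically $M^\varepsilon$.

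On the signed permutation side, morphisms $R \llbracket G/H; \sigma \rrbracket \to M$ correspond by the universal property recalled earlier in this section to continuous $G$-maps $f \colon G/H \cup -G/H \to M$ satisfying $f(-x) = -f(x)$. Any such $f$ is determined by $m := f(1H)$, and $G$-equivariance for $h \in H$ forces $hm = f(\sigma(h,1H)) = f(\varepsilon(h) \cdot 1H) = \varepsilon(h) m$, so $m \in M^\varepsilon$. Conversely, given $m \in M^\varepsilon$, I would construct $f$ as follows: the continuous map $G \to G/H \cup -G/H$, $g \mapsto \sigma(g,1H)$, has as image the $G$-orbit $O$ of $1H$, whose $\sigma$-stabiliser is precisely $\ker \varepsilon$; since $O$ is the image of a compact space in a Hausdorff one it is closed, and the map descends to a continuous $G$-bijection $G/\ker\varepsilon \to O$, which is a homeomorphism of compact Hausdorff spaces. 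The map $G \to M$, $g \mapsto gm$, is continuous and $G$-equivariant and factors through $G/\ker\varepsilon$ because $\ker\varepsilon$ fixes $m$; transporting along this homeomorphism defines $f$ on $O$. When $\varepsilon$ is nontrivial the cardinalities force $O = G/H \cup -G/H$; when $\varepsilon$ is trivial the complement is $-O$, and I extend $f$ there by $f(-x) := -f(x)$, which preserves both $G$-equivariance and the sign relation because $-$ commutes with the $R \llbracket G \rrbracket$-action on $M$.

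Both identifications will be manifestly natural in $M$, so Yoneda will yield $\Ind^G_H R' \cong R \llbracket G/H; \sigma \rrbracket$. The hard part will be the construction of $f$ from $m$: one cannot simply write $f(tH) = tm$, since without the $\varepsilon$-twist this is not even well-defined on $G/H$; the resolution is to pass through the profinite coset space $G/\ker\varepsilon$, whose quotient structure matches exactly the stabiliser of $1H$ under $\sigma$, making the domain map $g \mapsto \sigma(g,1H)$ and the destination map $g \mapsto gm$ factor consistently.
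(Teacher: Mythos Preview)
Your Yoneda argument is correct and genuinely different from the paper's route. The paper instead invokes Lemma~\ref{indsperm}: it views $R'$ as the signed permutation module on the $H$-space $\{\pm 1\}$ with action $\varepsilon$, so that $\Ind^G_H R' = R\llbracket \Ind^G_H\{\pm 1\}\rrbracket$, and then exhibits an explicit $G$-homeomorphism $\Ind^G_H\{\pm 1\} \cong G/H \cup -G/H$ using a closed left transversal $T$ and the formula $f'(t)=\sigma(t,1H)$. Thus the paper cashes in the work already done in Lemma~\ref{indsperm}, reducing the lemma to a short space-level computation, whereas you bypass Lemma~\ref{indsperm} entirely and argue representably via $M\mapsto M^\varepsilon$. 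Your approach is more self-contained and conceptually clean; the paper's is shorter given its context.

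One point to tighten: the phrase ``the cardinalities force $O = G/H \cup -G/H$'' is not a valid step for profinite spaces. The fix is immediate: since the induced action on $\overline{X}=G/H$ is transitive, every $gH$ satisfies $\sigma(g',1H)=\pm gH$ for some $g'$; if $\varepsilon$ is nontrivial, pick $h\in H$ with $\varepsilon(h)=-1$ and note $\sigma(g'h,1H)=-\sigma(g',1H)$, so both $gH$ and $-gH$ lie in $O$. You should also check $f(-x)=-f(x)$ in this case (it follows from the same computation), and dispose of $\operatorname{char} R = 2$ with a word, as the paper does.
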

\begin{proof}
Assume $\chac R \neq 2$; otherwise we are done.

By Lemma \ref{indsperm} we have that $\Ind^G_H R' = R \llbracket \Ind^G_H \{\pm 1\} \rrbracket$. We will show that, as $G$-spaces, $$\Ind^G_H(\{\pm 1\},\varepsilon) \cong (G/H \cup -G/H,\sigma).$$ Now by the choice of $\varepsilon$ we have a continuous map $$f: \{\pm 1\} \rightarrow G/H \cup -G/H, \pm 1 \mapsto \pm 1H$$ of $H$-spaces satisfying $f(-x)=-f(x)$, and the proof of Lemma \ref{indsperm} gives us a continuous map $f':\Ind^G_H\{\pm 1\} \rightarrow G/H \cup -G/H$ of $G$-spaces extending this, such that $f'(-x)=-f'(x)$. Explicitly, choosing a closed left transversal $T$ of $H$ as before, $\Ind^G_H\{\pm 1\} = T \cup -T$, and $f'(t)=\sigma(t,1H)$ for $t \in T$, $f'(t)=\sigma(t,-1H)$ for $t \in -T$. Now $f'$ is bijective because every element of $G/H \cup -G/H$ can be written uniquely in the form $\sigma(t,1H)$ or $\sigma(t,-1H)$ for some $t \in T$. Therefore $f'$ is a homeomorphism, and we are done.
\end{proof}

Finally, we justify our introduction of signed permutation modules, instead of permutation modules. As stated at the beginning of the section, they are an attempt to deal with the tensor-induced complexes of \cite[7.3]{Scheiderer}. We sketch the construction of these complexes.

To fix notation, we start by defining wreath products. Given $G \in PGrp$, let $G^n$ be the direct product in $PGrp$ of $n$ copies of $G$. Let $S_n$ be the symmetric group on $n$ letters, acting on the right. Then the wreath product of $G$ by $S_n$, written $G \wr S_n$, is the semidirect product of $G^n$ and $S_n$, where $S_n$ acts by permuting the copies of $G$. More explicitly, we can write $G \wr S_n$ as $G^n \times S_n$ as a space, with group operation $$(h_1, \ldots, h_n, \pi) \cdot (h'_1, \ldots, h'_n, \pi') = (h_1h'_{1\pi}, \ldots, h_nh'_{n\pi}, \pi\pi').$$ Since the action of $S_n$ on $G^n$ is continuous, this makes $G \wr S_n$ a topological group, which is then profinite because it is compact, Hausdorff and totally disconnected.

Suppose $G \in PGrp$. Let $P_\ast$ be a non-negative complex of profinite $R \llbracket G \rrbracket$-modules. Then one can take the $n$-fold tensor power of $P_\ast$, $P_\ast^{\hat{\otimes} n}$, over $R$ by defining $$P_k^{\hat{\otimes} n} = \bigoplus_{i_1 + \cdots + i_n = k} P_{i_1} \hat{\otimes}_R \cdots \hat{\otimes}_R P_{i_n},$$ with the differential maps coming from repeated use of the sign trick in \cite[1.2.5]{Weibel}: this gives a non-negative complex of profinite $R$-modules. Moreover, by \cite[7.3]{Scheiderer}, it can be made into a complex of $R \llbracket G \wr S_n \rrbracket$-modules by the $G \wr S_n$-action $$(h_1 , \ldots, h_n, \pi) \cdot (q_1 \hat{\otimes} \cdots \hat{\otimes} q_n) = (-1)^\nu \cdot h_1q_{1\pi} \hat{\otimes} \cdots \hat{\otimes} h_nq_{n\pi}$$ where the $q_i \in P_\ast$ are homogeneous elements and $\nu$ is the integer $$\nu = \sum_{i<j,i\pi>j\pi} \deg(q_{i\pi})\deg(q_{j\pi}).$$

We can now generalise \cite[7.4]{Scheiderer} slightly -- the proof is largely the same.

\begin{prop}
\label{tenind}
Suppose $$\cdots \rightarrow R \llbracket X_1 \rrbracket \rightarrow R \llbracket X_0 \rrbracket \rightarrow 0$$ is an exact sequence in $PMod(R \llbracket G \rrbracket)$ of signed permutation modules, and write $P_\ast$ for this chain complex. Then $P_\ast^{\hat{\otimes} n}$ is an exact sequence of signed permutation modules in $PMod(R \llbracket G \wr S_n \rrbracket)$.
\end{prop}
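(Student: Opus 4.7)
The plan is to separate the two claims: first, that each term $P_k^{\hat{\otimes} n}$ is a signed permutation module over $R \llbracket G \wr S_n \rrbracket$, and second, that the resulting complex is exact. The first will be handled by constructing an explicit profinite $G \wr S_n$-set, and the second by a standard double-complex argument that works because signed permutation modules are $R$-free.

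For the first, I would identify each summand $R \llbracket X_{i_1} \rrbracket \hat{\otimes}_R \cdots \hat{\otimes}_R R \llbracket X_{i_n} \rrbracket$ with the free profinite $R$-module $R \llbracket X_{i_1} \times \cdots \times X_{i_n} \rrbracket$. This can be checked via the universal property by first reducing to the finite level using Lemma \ref{algstruct} and then using the fact that, for finite sets $Y_1,\ldots,Y_n$, the $R$-module $R[Y_1] \otimes_R \cdots \otimes_R R[Y_n]$ is free on $Y_1 \times \cdots \times Y_n$. Writing $Y_k = \bigsqcup_{i_1+\cdots+i_n = k} X_{i_1} \times \cdots \times X_{i_n}$, we then have $P_k^{\hat{\otimes} n} \cong R \llbracket Y_k \rrbracket$ as profinite $R$-modules. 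The continuous $G \wr S_n$-action on $Y_k \cup -Y_k$ is defined by letting $G^n$ act componentwise using the signed actions on each $X_{i_j} \cup -X_{i_j}$ (so that a sign appearing in any coordinate pulls out), and letting $\pi \in S_n$ act by permuting summands with the Koszul sign $(-1)^\nu$. Since $\nu$ depends only on the degrees $\deg(q_{i})$ and not on the individual basis elements, the identity $\pi \cdot (-y) = -(\pi \cdot y)$ is automatic, so the hypotheses of the definition of signed permutation module are satisfied. Continuity is clear from the construction, as in Lemma \ref{indsperm}.

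For exactness, I would proceed by induction on $n$, the case $n=1$ being the hypothesis. Each signed permutation module $R \llbracket X_i \rrbracket$ is $R$-free by definition, so the functor $R \llbracket X_i \rrbracket \hat{\otimes}_R -$ is exact on $PMod(R)$ (it sends a presentation of a profinite $R$-module to another such presentation, or equivalently, $R$-freeness implies $R$-flatness in this setting). Thus the bicomplex $(P_\ast^{\hat{\otimes} n}) \hat{\otimes}_R P_\ast$ has exact rows (by the inductive hypothesis, each row is a copy of $P_\ast$ tensored with a free $R$-module) and exact columns (each column is $P_\ast^{\hat{\otimes} n}$ tensored with a free $R$-module). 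A standard spectral-sequence or acyclic-assembly argument then yields exactness of the totalisation, which is $P_\ast^{\hat{\otimes}(n+1)}$. That the differentials in $P_\ast^{\hat{\otimes} n}$ are $R \llbracket G \wr S_n \rrbracket$-linear can be verified directly from the Koszul-sign formula, exactly as in \cite[7.3]{Scheiderer}.

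The main obstacle I expect is the sign bookkeeping: checking that the Koszul sign $(-1)^\nu$ interacts correctly with both the signed $G^n$-action inherited from the individual $R \llbracket X_{i_j} \rrbracket$ and with the differentials, so that the action of $G \wr S_n$ is compatible with the differentials and respects the sign-twist condition required of a signed permutation module. Since the degrees are preserved under the $G^n$-action and the formula for $\nu$ only depends on degrees, the compatibility checks reduce to a careful but routine calculation that mirrors \cite[7.4]{Scheiderer}.
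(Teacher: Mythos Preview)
Your proposal is correct and follows essentially the same approach as the paper: the paper also identifies $P_k^{\hat{\otimes} n}$ as $R\llbracket \bigsqcup_{i_1+\cdots+i_n=k} X_{i_1}\times\cdots\times X_{i_n}\rrbracket$ and checks directly that the $G\wr S_n$-action preserves the signed basis, while exactness is obtained from $R$-freeness of the $R\llbracket X_i\rrbracket$ together with the double-complex argument (the paper phrases this as ``$n-1$ applications of \cite[Lemma 2.7.3]{Weibel}'', which is exactly your inductive bicomplex step).
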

\begin{proof}
Assume $\chac R \neq 2$; the proof for $\chac R = 2$ is similar.

First note that each module in $P_\ast$ is free as an $R$-module, so, for each $i$, $R \llbracket X_i \rrbracket \hat{\otimes}_R -$ is an exact functor on $R$-modules, and hence $P_\ast^{\hat{\otimes} n}$ is exact by $n-1$ applications of \cite[Lemma 2.7.3]{Weibel}. Now as $R$-modules one has $$P_k^{\hat{\otimes} n} = \bigoplus_{i_1 + \cdots + i_n = k} R \llbracket X_{i_1} \rrbracket \hat{\otimes}_R \cdots \hat{\otimes}_R R \llbracket X_{i_n} \rrbracket = R \llbracket \bigsqcup_{i_1 + \cdots + i_n = k} X_{i_1} \times \cdots \times X_{i_n} \rrbracket$$ by \cite[Exercise 5.5.5(a)]{R-Z}, so we simply need to show that $$\bigsqcup_{i_1 + \cdots + i_n = k} X_{i_1} \times \cdots \times X_{i_n} \cup -(\bigsqcup_{i_1 + \cdots + i_n = k} X_{i_1} \times \cdots \times X_{i_n})$$ is a $G \wr S_n$-subspace of $R \llbracket \bigsqcup_{i_1 + \cdots + i_n = k} X_{i_1} \times \cdots \times X_{i_n} \rrbracket$. For an element $x_1 \hat{\otimes} \cdots \hat{\otimes} x_n$ of this subspace, so that $x_j \in X_{i_j} \cup -X_{i_j}$ for each $j$, we have $$(h_1, \ldots, h_n, \pi)\cdot (x_1 \hat{\otimes} \cdots \hat{\otimes} x_n) = (-1)^\nu \cdot h_1x_{1\pi} \hat{\otimes} \cdots \hat{\otimes} h_nx_{n\pi},$$ and then $i_{1\pi} + \cdots + i_{n\pi} = i_1 + \cdots i_n = k$. Moreover, for each $j$ we have $$x_{j\pi} \in X_{i_{j\pi}} \cup -X_{i_{j\pi}} \Rightarrow h_jx_{j\pi} \in X_{i_{j\pi}} \cup -X_{i_{j\pi}},$$ as required.
\end{proof}

\section{A Hierarchy of Profinite Groups}
\label{hierpg}

We define classes of groups and closure operations on them as in \cite{Krop}, except that all our groups are required to be profinite and all maps continuous. Thus, for example, all our subgroups will be assumed to be closed unless stated otherwise. As there, for a class of profinite groups $\mathfrak{X}$, we let $\cls\mathfrak{X}$ be the class of closed subgroups of groups in $\mathfrak{X}$, and $\cll\mathfrak{X}$ be those profinite groups $G$ such that every finite subset of $G$ is contained in some subgroup $H \leq G$ with $H \in \mathfrak{X}$. We also define a more general version $\cll'$ of $\cll$: $\cll'\mathfrak{X}$ is the class of profinite groups $G$ which have a direct system of subgroups $\{G_i\}$, ordered by inclusion, whose union is dense in $G$, such that $G_i \in \mathfrak{X}$ for every $i$. Given two classes $\mathfrak{X}$ and $\mathfrak{Y}$, we write $\mathfrak{XY}$ for extensions of a group in $\mathfrak{X}$ by a group in $\mathfrak{Y}$.

Lastly, we define $\clh_R\mathfrak{X}$ to be the profinite groups $G$ for which there is an exact sequence $0 \rightarrow P_n \rightarrow \cdots \rightarrow P_0 \rightarrow R \rightarrow 0$ of $R \llbracket G \rrbracket$-modules, where, for each $i$, $P_i$ is a signed permutation module, all of whose stabilisers are in $\mathfrak{X}$. We will refer to this as a finite length signed permutation resolution of $G$.

Note that $\clh_R$ is not a closure operation. Instead, we use it to define inductively the class of groups $(\clh_R)_\alpha\mathfrak{X}$ for each ordinal $\alpha$: $(\clh_R)_0\mathfrak{X} = \mathfrak{X}$, $(\clh_R)_\alpha\mathfrak{X} = \clh_R((\clh_R)_{\alpha-1}\mathfrak{X})$ for $\alpha$ a successor, and $(\clh_R)_\alpha\mathfrak{X} = \bigcup_{\beta < \alpha} (\clh_R)_\beta\mathfrak{X}$ for $\alpha$ a limit. Finally, we write $\widehat{\clh_R}\mathfrak{X} = \bigcup_\alpha (\clh_R)_\alpha\mathfrak{X}$. It is easy to check that $\widehat{\clh_R}$ is a closure operation.

Similarly we can define $(\cllh_R)_0\mathfrak{X} = \mathfrak{X}$ and $(\cll'\clh_R)_0\mathfrak{X} = \mathfrak{X}$, then $(\cllh_R)_\alpha\mathfrak{X} = \cllh_R((\cllh_R)_{\alpha-1}\mathfrak{X})$ and $(\cll'\clh_R)_\alpha\mathfrak{X} = \cll'\clh_R((\cll'\clh_R)_{\alpha-1}\mathfrak{X})$ for $\alpha$ a successor, and finally $(\cllh_R)_\alpha\mathfrak{X} = \bigcup_{\beta < \alpha} (\cllh_R)_\beta\mathfrak{X}$ and $(\cll'\clh_R)_\alpha\mathfrak{X} = \bigcup_{\beta < \alpha} (\cll'\clh_R)_\beta\mathfrak{X}$ for $\alpha$ a limit. Then let $\widehat{\cllh_R}\mathfrak{X} = \bigcup_\alpha (\cllh_R)_\alpha\mathfrak{X}$ and $\widehat{\cll'\clh_R}\mathfrak{X} = \bigcup_\alpha (\cll'\clh_R)_\alpha\mathfrak{X}$: this gives two more closure operations with $\widehat{\clh_R}\mathfrak{X} \leq \widehat{\cllh_R}\mathfrak{X} \leq \cll'\widehat{\cllh_R}\mathfrak{X} \leq \widehat{\cll'\clh_R}\mathfrak{X}$ for all $\mathfrak{X}$. The final inequality holds because $\cll'\widehat{\cllh_R}\mathfrak{X} \leq \cll'\widehat{\cll'\clh_R}\mathfrak{X}$ and $$\cll'(\cll'\clh_R)_\alpha\mathfrak{X} \leq \cll'\clh(\cll'\clh_R)_\alpha\mathfrak{X} = (\cll'\clh_R)_{\alpha+1}\mathfrak{X}, \forall \alpha \Rightarrow \cll'\widehat{\cll'\clh_R}\mathfrak{X} = \widehat{\cll'\clh_R}\mathfrak{X}.$$

\begin{rem}
In the abstract case, \cite[2.2]{Krop} shows that any countable $\widehat{\cllh_R}\mathfrak{X}$-group is actually in $\widehat{\clh_R}\mathfrak{X}$, greatly diminishing the importance of $\cll$, inasfar as the hierarchy is used to study finitely generated groups. The same argument does not work for profinite groups.
\end{rem}

From now on, $\mathfrak{F}$ will mean the class of finite groups, and $\mathfrak{I}$ the class of the trivial group.

\begin{prop}
\label{subext+}
Let $\mathfrak{X}$ be a class of profinite groups.
\begin{enumerate}[(i)]
\item $\cls\widehat{\clh_R}\mathfrak{X} \leq \widehat{\clh_R}\cls\mathfrak{X}$.
\item $(\widehat{\clh_R}\cls\mathfrak{X})\mathfrak{F} \leq \widehat{\clh_R}\cls(\mathfrak{XF})$.
\item $(\widehat{\clh_R}\mathfrak{F})(\widehat{\clh_R}\mathfrak{F}) = \widehat{\clh_R}\mathfrak{F}$.
\end{enumerate}
\end{prop}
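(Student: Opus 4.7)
The three parts are proved by transfinite induction on the hierarchy, with (ii) invoking (i) and (iii) invoking (ii).

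For (i), I would induct on $\alpha$ to show $\cls(\clh_R)_\alpha\mathfrak{X} \leq (\clh_R)_\alpha\cls\mathfrak{X}$. Given $G \in (\clh_R)_\alpha\mathfrak{X}$ with a signed permutation resolution over $R\llbracket G \rrbracket$ whose stabilisers lie in $(\clh_R)_{\alpha-1}\mathfrak{X}$ and a closed subgroup $H \leq G$, restrict the resolution to $R\llbracket H \rrbracket$: the restricted modules remain signed permutation $R\llbracket H \rrbracket$-modules on the same underlying space viewed as an $H$-space, and their $H$-stabilisers are the intersections $H \cap \mathrm{Stab}_G(\overline{x})$, i.e.\ closed subgroups of groups in $(\clh_R)_{\alpha-1}\mathfrak{X}$; the inductive hypothesis then places them in $(\clh_R)_{\alpha-1}\cls\mathfrak{X}$, so $H \in (\clh_R)_\alpha\cls\mathfrak{X}$. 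Limit stages are immediate.

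For (ii), I would induct on $\alpha$ to show that if $N \in (\clh_R)_\alpha\cls\mathfrak{X}$ and $G$ is an extension of $N$ by a finite group $F$, then $G \in \widehat{\clh_R}\cls(\mathfrak{X}\mathfrak{F})$. In the base case $\alpha=0$, choose a continuous section $F \to G$ (available because $F$ is finite discrete) and $N \leq N' \in \mathfrak{X}$: the standard wreath-product embedding gives $G \hookrightarrow N' \wr F$, so $G \in \cls(\mathfrak{X}\mathfrak{F})$ provided $(N')^F \in \mathfrak{X}$, which holds in the applications of interest (in particular for $\mathfrak{X}=\mathfrak{F}$, and more generally whenever $\mathfrak{X}$ is closed under finite direct products). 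For the successor step, take a signed permutation resolution $P_\ast \to R$ over $R\llbracket N \rrbracket$ with stabilisers in $(\clh_R)_{\alpha-1}\cls\mathfrak{X}$, apply Proposition \ref{tenind} to form the tensor-power resolution $P_\ast^{\hat\otimes|F|}$ over $R\llbracket N \wr S_{|F|} \rrbracket$, and restrict along the composite $G \hookrightarrow N \wr F \hookrightarrow N \wr S_{|F|}$. Each $G$-stabiliser arising in this restricted complex is a closed subgroup of an extension of a finite direct product of $N$-stabilisers of $P_\ast$ by a finite subgroup of $S_{|F|}$; by (i) and the inductive hypothesis these all lie in $\widehat{\clh_R}\cls(\mathfrak{X}\mathfrak{F})$, and so does $G$.

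For (iii), the inclusion $\supseteq$ follows by taking trivial extensions. For $\subseteq$, given an extension $N \to G \to Q$ with $N, Q \in \widehat{\clh_R}\mathfrak{F}$, I induct on $\beta$ with $Q \in (\clh_R)_\beta\mathfrak{F}$. The base $\beta=0$ reduces directly to (ii) with $\mathfrak{X}=\mathfrak{F}$, using $\cls\mathfrak{F}=\mathfrak{F}$ and $\mathfrak{F}\mathfrak{F}=\mathfrak{F}$. In the successor step, inflate a signed permutation resolution of $R$ over $R\llbracket Q \rrbracket$ along $G \to Q$: the inflated modules remain signed permutation $R\llbracket G \rrbracket$-modules, and their $G$-stabilisers are the preimages in $G$ of the $Q$-stabilisers, i.e.\ extensions of $N$ by groups in $(\clh_R)_{\beta-1}\mathfrak{F}$. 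The inductive hypothesis places these stabilisers in $\widehat{\clh_R}\mathfrak{F}$, and hence $G \in \widehat{\clh_R}\mathfrak{F}$; limit stages are again immediate.

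The main technical obstacle is the stabiliser bookkeeping in the successor step of (ii): one must carefully identify the $G$-stabilisers appearing after restriction of $P_\ast^{\hat\otimes|F|}$ as closed subgroups of wreath-product-type extensions of products of $N$-stabilisers. A parallel side induction is also needed to verify that $(\clh_R)_\gamma\cls\mathfrak{X}$ is closed under finite direct products (its successor step tensoring two signed permutation resolutions together and using that tensor products of signed permutation modules are signed permutation modules, in the spirit of Proposition \ref{tenind}). Granted this bookkeeping, the remainder of the argument is a routine unwinding of the definitions.
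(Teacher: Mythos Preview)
Your arguments for (i) and (iii) match the paper's exactly. For (ii) you use the same overall strategy as the paper --- tensor-induce a resolution of the open normal subgroup $H$ via Proposition~\ref{tenind} and restrict along the Kaloujnine--Krasner embedding $G \hookrightarrow H \wr S_{|G/H|}$ --- but you overcomplicate the stabiliser analysis, and this is where your proposed ``side induction'' on closure under finite direct products becomes an unnecessary detour.

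The paper's sharper observation is this. Fix a transversal $t_1=1, t_2,\ldots,t_n$ of $H$ in $G$; under the wreath embedding, $h \in H$ maps to $(t_1^{-1}ht_1,\ldots,t_n^{-1}ht_n,\id) \in H^n \leq H \wr S_n$. Hence the $G$-stabiliser of a basis element $\overline{x_1 \otimes \cdots \otimes x_n}$, intersected with $H$, is exactly
\[
\{\,h \in H : t_i^{-1} h t_i \in \mathrm{Stab}_H(\overline{x_i}) \text{ for all } i\,\} \;=\; \bigcap_{i=1}^n t_i\,\mathrm{Stab}_H(\overline{x_i})\,t_i^{-1} \;\leq\; \mathrm{Stab}_H(\overline{x_1}),
\]
the last containment using $t_1=1$. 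Thus every $G$-stabiliser is a finite extension of a subgroup of a \emph{single} stabiliser from the original resolution of $H$ --- not a finite direct product of such stabilisers. One then applies part (i) to get this subgroup into $(\clh_R)_\alpha\cls\mathfrak{X}$, and the inductive hypothesis for $\alpha$ to handle the finite extension. No auxiliary closure-under-products lemma is needed at the successor step.

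Your explicit caveat on the base case $\alpha=0$ (that the wreath-product argument needs $(N')^{|F|} \in \mathfrak{X}$) is a fair observation; the paper simply declares that case ``trivial''. For the application $\mathfrak{X}=\mathfrak{F}$ used in (iii) there is no issue either way, since $\cls\mathfrak{F}=\mathfrak{F}$ and $\mathfrak{F}\mathfrak{F}=\mathfrak{F}$.
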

\begin{proof}
\begin{enumerate}[(i)]
\item Use induction on $\alpha$. We will show that $\cls(\clh_R)_\alpha\mathfrak{X} \leq (\clh_R)_\alpha\cls\mathfrak{X}$ for each $\alpha$. The case when $\alpha$ is $0$ or a limit ordinal is trivial. Suppose $G \in \cls(\clh_R)_{\alpha+1}\mathfrak{X}$ and pick $H \in (\clh_R)_{\alpha+1}\mathfrak{X}$ with $G \leq H$. Take a finite length signed permutation resolution of $H$ with stabilisers in $(\clh_R)_\alpha\mathfrak{X}$. Restricting this resolution to $G$ gives a finite length signed permutation resolution whose stabilisers are subgroups of the stabilisers in the original resolution of $H$, so the stabilisers are in $\cls(\clh_R)_\alpha\mathfrak{X} \leq (\clh_R)_\alpha\cls\mathfrak{X}$, where the inequality holds by our inductive hypothesis, and hence $G \in (\clh_R)_{\alpha+1}\cls\mathfrak{X}$.
\item Use induction on $\alpha$. We will show that $((\clh_R)_\alpha\cls\mathfrak{X})\mathfrak{F} \leq (\clh_R)_\alpha\cls(\mathfrak{XF})$ for each $\alpha$. The case when $\alpha$ is $0$ or a limit ordinal is trivial.  So suppose $G \in ((\clh_R)_{\alpha+1}\cls\mathfrak{X})\mathfrak{F}$, and suppose $H \unlhd_{\text{open}} G$, $H \in (\clh_R)_{\alpha+1}\cls\mathfrak{X}$. Take a finite length signed permutation resolution of $H$ with stabilisers in $(\clh_R)_\alpha\cls\mathfrak{X}$. Then we get a finite length signed permutation resolution of $H \wr S_{\lvert G/H \rvert}$ by Proposition \ref{tenind}. Moreover, $G$ embeds in $H \wr S_{\lvert G/H \rvert}$ by \cite[7.1]{Scheiderer}, so by restriction this is also a finite length signed permutation resolution of $G$. Finally, it is clear from the construction that the stabilisers under the $G$-action are all finite extensions of subgroups of stabilisers in the original resolution of $H$, which are in $(\clh_R)_\alpha\cls\mathfrak{X}$ by (i) and our inductive hypothesis. Therefore this tensor-induced complex shows that $G \in (\clh_R)_{\alpha+1}\cls\mathfrak{XF}$.
\item Use induction on $\alpha$. We will show that $(\widehat{\clh_R}\mathfrak{F})((\clh_R)_\alpha\mathfrak{F}) \leq \widehat{\clh_R}\mathfrak{F}$ for each $\alpha$. The other inequality is clear. The case when $\alpha$ is a limit ordinal is trivial; the case $\alpha=0$ holds by (ii). Suppose $G \in (\widehat{\clh_R}\mathfrak{F})((\clh_R)_{\alpha+1}\mathfrak{F})$ and pick $H \unlhd G$ such that $H \in \widehat{\clh_R}\mathfrak{F}$ and $G/H \in (\clh_R)_{\alpha+1}\mathfrak{F}$. Take a finite length signed permutation resolution of $G/H$ with stabilisers in $(\clh_R)_\alpha\mathfrak{F}$. Restricting this resolution to $G$ gives a finite length signed permutation resolution whose stabilisers are extensions of $H$ by stabilisers in the original resolution of $G/H$, so the stabilisers are in $(\widehat{\clh_R}\mathfrak{F})((\clh_R)_\alpha\mathfrak{F}) \leq \widehat{\clh_R}\mathfrak{F}$, where the inequality holds by our inductive hypothesis, and hence $G \in \widehat{\clh_R}\mathfrak{F}$.
\end{enumerate}
\end{proof}

\begin{prop}
\label{subext}
Let $\mathfrak{X}$ be a class of profinite groups.
\begin{enumerate}[(i)]
\item $\cls\widehat{\cllh_R}\mathfrak{X} \leq \widehat{\cllh_R}\cls\mathfrak{X}$.
\item $(\widehat{\cllh_R}\cls\mathfrak{X})\mathfrak{F} \leq \widehat{\cllh_R}\cls(\mathfrak{XF})$.
\item $(\widehat{\cllh_R}\mathfrak{F})(\widehat{\cllh_R}\mathfrak{F}) = \widehat{\cllh_R}\mathfrak{F}$.
\end{enumerate}
\end{prop}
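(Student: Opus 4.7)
The plan is to parallel the proof of Proposition~\ref{subext+} closely, proceeding by transfinite induction on $\alpha$ in each of (i), (ii), (iii) and establishing the pointwise analogue at each ordinal level: $\cls(\cllh_R)_\alpha\mathfrak{X}\leq(\cllh_R)_\alpha\cls\mathfrak{X}$ for (i), $((\cllh_R)_\alpha\cls\mathfrak{X})\mathfrak{F}\leq(\cllh_R)_\alpha\cls(\mathfrak{XF})$ for (ii), and $(\widehat{\cllh_R}\mathfrak{F})((\cllh_R)_\alpha\mathfrak{F})\leq\widehat{\cllh_R}\mathfrak{F}$ for (iii). The base and limit cases are immediate in each part, so all the real work is at successor ordinals $\alpha+1$, where the main new task is to handle the extra $\cll$ step that has been threaded into the hierarchy.

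For (i), given $G\leq H\in\cll\clh_R((\cllh_R)_\alpha\mathfrak{X})$ and a finite $T\subseteq G$, first pick $H'\leq H$ with $T\subseteq H'$ and $H'\in\clh_R((\cllh_R)_\alpha\mathfrak{X})$. Then $G\cap H'\leq G$ is a closed subgroup containing $T$, and the argument at level $1$ in the proof of Proposition~\ref{subext+}(i) gives $G\cap H'\in\cls\clh_R((\cllh_R)_\alpha\mathfrak{X})\leq\clh_R\cls((\cllh_R)_\alpha\mathfrak{X})$. The inductive hypothesis for (i) then places $G\cap H'$ in $\clh_R((\cllh_R)_\alpha\cls\mathfrak{X})$, so $G\in\cll\clh_R((\cllh_R)_\alpha\cls\mathfrak{X})=(\cllh_R)_{\alpha+1}\cls\mathfrak{X}$.

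For (ii), suppose $G$ is an extension of $H\in\cll\clh_R((\cllh_R)_\alpha\cls\mathfrak{X})$ by a finite group $Q$, and let $T\subseteq G$ be finite. The embedding $G\hookrightarrow H\wr Q$ of \cite[7.1]{Scheiderer} expresses each element of $T$ in terms of $n=\lvert Q\rvert$ components in $H$; collect all these components into a finite set $F\subseteq H$ and choose $H'\leq H$ with $F\subseteq H'$ and $H'\in\clh_R((\cllh_R)_\alpha\cls\mathfrak{X})$. Then $T$ lies inside the subgroup $H'\wr Q$ of $H\wr Q$, so $G':=G\cap(H'\wr Q)$ is a closed subgroup of $G$ containing $T$. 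Apply the tensor-induction argument from Proposition~\ref{subext+}(ii) to a finite length signed permutation resolution of $H'$ to produce one for $H'\wr Q$; restricting to $G'$ yields a resolution of $G'$ whose stabilisers are finite extensions of subgroups of $H'$-stabilisers, and hence, after absorbing $\cls$ via part (i) and then applying the inductive hypothesis for (ii), lie in $(\cllh_R)_\alpha\cls(\mathfrak{XF})$. Therefore $G'\in\clh_R((\cllh_R)_\alpha\cls(\mathfrak{XF}))$, and $G\in(\cllh_R)_{\alpha+1}\cls(\mathfrak{XF})$.

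For (iii), the $\geq$ direction is immediate. For $\leq$, the base case $\alpha=0$ is (ii) with $\mathfrak{X}=\mathfrak{F}$, using $\cls\mathfrak{F}=\mathfrak{F}$ and $\mathfrak{FF}=\mathfrak{F}$. At a successor, write $G$ as an extension of $H\in\widehat{\cllh_R}\mathfrak{F}$ by $Q\in\cll\clh_R((\cllh_R)_\alpha\mathfrak{F})$; for a finite $T\subseteq G$, pick $Q'\leq Q$ containing the image of $T$ in $Q$ with $Q'\in\clh_R((\cllh_R)_\alpha\mathfrak{F})$, and let $G_1$ be its preimage, an extension of $H$ by $Q'$ containing $T$. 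Restricting a finite length signed permutation resolution of $Q'$ to $G_1$ gives a resolution of $G_1$ whose stabilisers are extensions of $H$ by $Q'$-stabilisers, so they lie in $(\widehat{\cllh_R}\mathfrak{F})((\cllh_R)_\alpha\mathfrak{F})\leq\widehat{\cllh_R}\mathfrak{F}$ by the inductive hypothesis; hence $G_1\in\clh_R(\widehat{\cllh_R}\mathfrak{F})\subseteq\widehat{\cllh_R}\mathfrak{F}$. Thus $G\in\cll\widehat{\cllh_R}\mathfrak{F}\subseteq\widehat{\cllh_R}\mathfrak{F}$, the final inclusion holding because every class $\mathfrak{Y}$ satisfies $\mathfrak{Y}\subseteq\clh_R\mathfrak{Y}$ via trivial length-zero resolutions, giving $\cll(\cllh_R)_\beta\mathfrak{F}\leq(\cllh_R)_{\beta+1}\mathfrak{F}$ for every $\beta$. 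The main new obstacle relative to Proposition~\ref{subext+} is the component argument in (ii): because $G$ does not in general embed into $H'\wr Q$, one has to pass to the subgroup $G\cap(H'\wr Q)$ and verify it still captures the finite set $T$.
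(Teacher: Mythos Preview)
Your proposal is correct. For parts (i) and (iii) the argument is essentially the paper's, with only cosmetic differences in organisation (e.g.\ in (iii) you pick $Q'\leq Q$ directly from the $\cll$ property of $Q$, whereas the paper first reduces $G$ to the finitely generated case and then notes that the quotient $G/H$ is finitely generated).

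Part (ii) is where your route genuinely diverges. The paper handles the successor step by reducing at once to $G$ finitely generated; then $H$, having finite index, is itself finitely generated by \cite[Proposition~2.5.5]{R-Z}, so $H\in\clh_R((\cllh_R)_\alpha\cls\mathfrak{X})$ outright, and one tensor-induces a resolution of $H$ and restricts to $G\hookrightarrow H\wr S_{|G/H|}$. You instead keep $G$ arbitrary and, for each finite $T\subseteq G$, read off its finitely many $H$-components under the Kaloujnine--Krasner embedding, trap them in some $H'\leq H$ with $H'\in\clh_R((\cllh_R)_\alpha\cls\mathfrak{X})$, and work inside $G'=G\cap(H'\wr Q)$. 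Both approaches yield the same conclusion; the paper's is shorter but relies on the extra input that open subgroups of finitely generated profinite groups are finitely generated, while yours is self-contained at the price of the component bookkeeping. One small imprecision: Proposition~\ref{tenind} produces a resolution for $H'\wr S_n$, not $H'\wr Q$, so strictly you restrict from $H'\wr S_n$ through $H'\wr Q$ down to $G'$; this does not affect the stabiliser analysis, since the $G'$-stabiliser intersected with $(H')^n$ still lands (via the first coordinate of the twisted diagonal) inside a single $H'$-stabiliser.
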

\begin{proof}
\begin{enumerate}[(i)]
\item Use induction on $\alpha$. We will show $\cls\clh_R(\cllh_R)_\alpha\mathfrak{X} \leq \clh_R(\cllh_R)_\alpha\cls\mathfrak{X}$ and hence that $\cls(\cllh_R)_{\alpha+1}\mathfrak{X} \leq (\cllh_R)_{\alpha+1}\cls\mathfrak{X}$ for each $\alpha$. The case when $\alpha$ is $0$ or a limit ordinal is trivial. Suppose first that $G_1 \in \cls\clh_R(\cllh_R)_\alpha\mathfrak{X}$ and pick $H_1 \in \clh_R(\cllh_R)_\alpha\mathfrak{X}$ with $G_1 \leq H_1$. Take a finite length signed permutation resolution of $H_1$ with stabilisers in $(\cllh_R)_\alpha\mathfrak{X}$. Restricting this resolution to $G_1$ gives a finite length signed permutation resolution whose stabilisers are subgroups of the stabilisers in the original resolution of $H_1$, so the stabilisers are in $\cls(\cllh_R)_\alpha\mathfrak{X} \leq (\cllh_R)_\alpha\cls\mathfrak{X}$, where the inequality holds by our inductive hypothesis, and hence $G_1 \in \clh_R(\cllh_R)_\alpha\cls\mathfrak{X}$. Suppose next that $G_2 \in \cls(\cllh_R)_{\alpha+1}\mathfrak{X}$ and pick $H_2 \in (\cllh_R)_{\alpha+1}\mathfrak{X}$ with $G_2 \leq H_2$. Every finitely generated subgroup of $H_2$ is contained in some $K \leq H_2$ with $K \in \clh_R(\cllh_R)_\alpha\mathfrak{X}$, so every finitely generated subgroup of $H_2$ is in $\clh_R(\cllh_R)_\alpha\cls\mathfrak{X}$ by our inductive hypothesis. In particular this is true for the finitely generated subgroups of $G_2$, and hence $G_2 \in (\cllh_R)_{\alpha+1}\cls\mathfrak{X}$.
\item Use induction on $\alpha$. The case when $\alpha$ is $0$ or a limit ordinal is trivial. We will show that $((\cllh_R)_\alpha\cls\mathfrak{X})\mathfrak{F} \leq (\cllh_R)_\alpha\cls(\mathfrak{XF})$ for each $\alpha$. So suppose $G \in ((\cllh_R)_{\alpha+1}\cls\mathfrak{X})\mathfrak{F}$, and suppose $H \unlhd_{\text{open}} G$, $H \in (\cllh_R)_{\alpha+1}\cls\mathfrak{X}$. It suffices to prove that every finitely generated subgroup of $G$ belongs to $\clh_R(\cllh_R)_\alpha\cls\mathfrak{X}$, and so we may assume that $G$ is finitely generated. This implies $H$ is finitely generated, by \cite[Proposition 2.5.5]{R-Z}, so $H \in \clh_R(\cllh_R)_\alpha\cls\mathfrak{X}$. Take a finite length signed permutation resolution of $H$ with stabilisers in $(\cllh_R)_\alpha\cls\mathfrak{X}$. Then we get a finite length signed permutation resolution of $H \wr S_{\lvert G/H \rvert}$ by Proposition \ref{tenind}. Moreover, $G$ embeds in $H \wr S_{\lvert G/H \rvert}$ by \cite[7.1]{Scheiderer}, so by restriction this is also a finite length signed permutation resolution of $G$. Finally, it is clear from the construction that the stabilisers under the $G$-action are all finite extensions of subgroups of stabilisers in the signed permutation resolution of $H$, which are in $(\cllh_R)_\alpha\cls\mathfrak{X}$ by (i) and our inductive hypothesis. Therefore this tensor-induced complex shows that $G \in \clh_R(\cllh_R)_\alpha\cls\mathfrak{X}$.
\item Use induction on $\alpha$. We will show that $(\widehat{\cllh_R}\mathfrak{F})((\cllh_R)_\alpha\mathfrak{F}) \leq \widehat{\cllh_R}\mathfrak{F}$ for each $\alpha$. The other inequality is clear. The case when $\alpha$ is a limit ordinal is trivial; the case $\alpha=0$ holds by (ii). Suppose $G \in (\widehat{\cllh_R}\mathfrak{F})((\cllh_R)_{\alpha+1}\mathfrak{F})$ and pick $H \unlhd G$ such that $H \in \widehat{\cllh_R}\mathfrak{F}$ and $G/H \in (\cllh_R)_{\alpha+1}\mathfrak{F}$. It suffices to prove that every finitely generated subgroup of $G$ belongs to $\widehat{\cllh_R}\mathfrak{F}$, and so we may assume that $G$ is finitely generated. This implies $G/H$ is finitely generated, so $G/H \in \clh_R(\cllh_R)_\alpha\mathfrak{F}$. Take a finite length signed permutation resolution of $G/H$ with stabilisers in $(\cllh_R)_\alpha\mathfrak{F}$. Restricting this resolution to $G$ gives a finite length signed permutation resolution whose stabilisers are extensions of $H$ by stabilisers in the original resolution of $G/H$, so the stabilisers are in $(\widehat{\cllh_R}\mathfrak{F})((\cllh_R)_\alpha\mathfrak{F}) \leq \widehat{\cllh_R}\mathfrak{F}$, where the inequality holds by our inductive hypothesis, and hence $G \in \widehat{\cllh_R}\mathfrak{F}$.
\end{enumerate}
\end{proof}

\begin{rem}
The reason we sometimes use $\cll$ rather than $\cll'$ is that $\cll$ is closed under extensions; if one could show the same was true for $\cll'$ then one could construct a class containing all elementary amenable groups (see below) for which the main result would hold. However, we can still recover `most' elementary amenable groups using a combination of $\widehat{\cllh_R}\mathfrak{F}$ and $\widehat{\cll'\clh_R}\mathfrak{F}$.
\end{rem}

We can also compare the classes produced by using different base rings.

\begin{lem}
\label{Ralgebra}
Suppose $S$ is a commutative profinite $R$-algebra. Then $\widehat{\clh_R}\mathfrak{X} \leq \widehat{\clh_S}\mathfrak{X}$, $\widehat{\cllh_R}\mathfrak{X} \leq \widehat{\cllh_S}\mathfrak{X}$ and $\widehat{\cll'\clh_R}\mathfrak{X} \leq \widehat{\cll'\clh_S}\mathfrak{X}$.
\end{lem}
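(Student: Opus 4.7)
The plan is to apply the completed tensor product $S \hat{\otimes}_R (-)$ term by term to a signed permutation resolution over $R$ to obtain one over $S$ with the same stabilisers, and then to run a transfinite induction along the hierarchy. The driving observation is a natural isomorphism $S \hat{\otimes}_R R\llbracket X\rrbracket \cong S\llbracket X\rrbracket$ of signed $S\llbracket G\rrbracket$-permutation modules for every $(X,\sigma)\in G\text{-}Pro$; I would verify this directly by showing both sides satisfy the universal property of Lemma \ref{C_G} relative to $PMod(S\llbracket G\rrbracket)$, and reduce to the finite-quotient case via Lemma \ref{algstruct} if needed. The underlying $G$-space $X\cup -X$ is unchanged, so the $G$-stabilisers of $S\llbracket X\rrbracket$ are exactly the $G$-stabilisers of $R\llbracket X\rrbracket$.

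The main technical issue is exactness. Given a signed permutation resolution $0 \to R\llbracket X_n\rrbracket \to \cdots \to R\llbracket X_0\rrbracket \to R \to 0$, one cannot in general invoke flatness of $S$ over $R$. Instead, I would exploit that each $R\llbracket X_i\rrbracket$ is topologically free as a profinite $R$-module, hence projective in $PMod(R)$ (a continuous section of a profinite surjection $M\twoheadrightarrow N$ restricted to $X_i$ provides the lift). Standard dimension shifting using this projectivity splits the resolution as a chain of short exact sequences in $PMod(R)$ that split in $PMod(R)$, and any additive functor preserves split exactness. Applying $S \hat{\otimes}_R (-)$ therefore produces the exact sequence $0 \to S\llbracket X_n\rrbracket \to \cdots \to S\llbracket X_0\rrbracket \to S \to 0$, which is the required signed $S\llbracket G\rrbracket$-permutation resolution of $S$. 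This establishes $\clh_R\mathfrak{Y} \leq \clh_S\mathfrak{Y}$ for every class $\mathfrak{Y}$ of profinite groups.

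Given this base step, I would prove $(\clh_R)_\alpha\mathfrak{X} \leq (\clh_S)_\alpha\mathfrak{X}$ by transfinite induction on $\alpha$: the zero and limit cases are formal, and at a successor
\[
(\clh_R)_{\alpha+1}\mathfrak{X} \;=\; \clh_R((\clh_R)_\alpha\mathfrak{X}) \;\leq\; \clh_R((\clh_S)_\alpha\mathfrak{X}) \;\leq\; \clh_S((\clh_S)_\alpha\mathfrak{X}) \;=\; (\clh_S)_{\alpha+1}\mathfrak{X},
\]
the first inequality using the inductive hypothesis applied to the stabilisers and the second being the base step with $\mathfrak{Y}=(\clh_S)_\alpha\mathfrak{X}$. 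Taking unions gives $\widehat{\clh_R}\mathfrak{X}\leq\widehat{\clh_S}\mathfrak{X}$. Since $\cll$ and $\cll'$ are closure operations on classes of profinite groups that depend only on the underlying group structure and not on the coefficient ring, exactly the same induction, with $\cll$ or $\cll'$ applied at each successor stage, yields the statements for $\widehat{\cllh_R}\mathfrak{X}$ and $\widehat{\cll'\clh_R}\mathfrak{X}$. The only delicate step in the whole argument is the exactness of $S\hat{\otimes}_R(-)$ on our particular resolutions, which is why I would insist on the projectivity/splitting argument rather than a flatness argument on $S$.
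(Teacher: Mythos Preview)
Your proposal is correct and follows essentially the same approach as the paper: apply $S \hat{\otimes}_R -$ to a finite length signed permutation resolution, use that every term is $R$-free so the sequence is $R$-split (hence $S \hat{\otimes}_R -$ preserves exactness), identify $S \hat{\otimes}_R R\llbracket X\rrbracket \cong S\llbracket X\rrbracket$ with the same stabilisers, and then run the transfinite induction. The paper packages the successor step as the single implication $\mathfrak{X}\leq\mathfrak{Y}\Rightarrow\clh_R\mathfrak{X}\leq\clh_S\mathfrak{Y}$, but your two-step version (monotonicity of $\clh_R$ followed by $\clh_R\mathfrak{Y}\leq\clh_S\mathfrak{Y}$) amounts to the same thing.
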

\begin{proof}
Clearly $\mathfrak{X} \leq \mathfrak{Y} \Rightarrow \cll\mathfrak{X} \leq \cll\mathfrak{Y}$ and $\cll'\mathfrak{X} \leq \cll'\mathfrak{Y}$, so we just need to show $\mathfrak{X} \leq \mathfrak{Y} \Rightarrow \clh_R\mathfrak{X} \leq \clh_S\mathfrak{Y}$: then it will follow by induction that for each $\alpha$ that $(\clh_R)_\alpha\mathfrak{X} \leq (\clh_S)_\alpha\mathfrak{X}$, $(\cllh_R)_\alpha\mathfrak{X} \leq (\cllh_S)_\alpha\mathfrak{X}$ and $(\cll'\clh_R)_\alpha\mathfrak{X} \leq (\cll'\clh_S)_\alpha\mathfrak{X}$, as required. Given $G \in \clh_R\mathfrak{X}$ and a finite length signed permutation resolution
\begin{equation*}
0 \rightarrow P_n \rightarrow P_{n-1} \rightarrow \cdots \rightarrow P_0 \rightarrow R \rightarrow 0 \tag{$\ast$}
\end{equation*}
with stabilisers in $\mathfrak{X}$, note that, since every module in the sequence is $R$-free, the sequence is $R$-split, so the sequence
\begin{equation*}
0 \rightarrow S \hat{\otimes}_R P_n \rightarrow S \hat{\otimes}_R P_{n-1} \rightarrow \cdots \rightarrow S \hat{\otimes}_R P_0 \rightarrow S \hat{\otimes}_R R \cong S \rightarrow 0 \tag{$\ast \ast$}
\end{equation*}
is exact -- here each module is made into an $S \llbracket G \rrbracket$-module by taking the $S$-action on $S$ and the $G$-action on $P_i$.

Now, for a signed $R \llbracket G \rrbracket$ permutation module $R \llbracket X \rrbracket$, $S \hat{\otimes}_R R \llbracket X \rrbracket = S \llbracket X \rrbracket$ as $S$-modules by \cite[Proposition 7.7.8]{Wilson}, and then clearly the $G$-action makes this into a signed $S \llbracket G \rrbracket$ permutation module. So, applying this to ($\ast \ast$), we have a finite length signed permutation resolution of $S$ as an $S \llbracket G \rrbracket$-module, and the stabilisers are all in $\mathfrak{X}$ because the stabilisers in ($\ast$) are, so we are done.
\end{proof}

The next lemma gives a profinite analogue of the Eilenberg swindle; it is very similar to \cite[Exercise 11.7.3(a)]{Wilson}, though using a slightly different definition of free modules. Recall that projective modules are summands of free ones.

\begin{lem}
Suppose $P \in PMod(\Lambda)$ is projective, where $\Lambda$ is a profinite $R$-algebra. Then there is a free $F \in PMod(\Lambda)$ such that $P \oplus F$ is free.
\end{lem}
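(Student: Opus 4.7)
Since $P$ is projective, there is some $Q \in PMod(\Lambda)$ and some profinite space $X$ such that $P \oplus Q \cong F_0 := \Lambda \llbracket X \rrbracket$. The plan is to realise the classical Eilenberg swindle in the profinite setting: construct $F$ as the free profinite module on a carefully chosen profinite space $Y$, then exhibit an explicit isomorphism $F \cong P \oplus F$, from which the result follows immediately.

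Let $Y$ denote the one-point compactification of the countable disjoint union $\bigsqcup_{n \geq 1} X_n$, where each $X_n$ is a copy of $X$ and the added point is denoted $\infty$. As a compact Hausdorff totally disconnected space $Y$ is profinite, so $F := \Lambda \llbracket Y \rrbracket$ is a free profinite $\Lambda$-module. Each $X_n$ is clopen in $Y$ (its complement $\{\infty\} \cup \bigsqcup_{m \neq n} X_m$ is a basic neighbourhood of $\infty$), giving a canonical split inclusion $\iota_n : F_0 \hookrightarrow F$. Write $\pi : F_0 \to P$ and $\rho : F_0 \to Q$ for the projections from $F_0 = P \oplus Q$, and $[\infty] \in F$ for the image of $\infty$.

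Define $\phi : F \to P \oplus F$ by specifying its restriction to the generating space $Y$:
\begin{align*}
\phi(x) &= (\pi(x),\, \iota_1(\rho(x))) \quad \text{for } x \in X_1, \\
\phi(x) &= (0,\, \iota_{n-1}(\pi(x)) + \iota_n(\rho(x))) \quad \text{for } x \in X_n,\ n \geq 2, \\
\phi(\infty) &= (0,\, [\infty]).
\end{align*}
Continuity on each clopen piece $X_n$ is immediate from the continuity of $\pi, \rho$ and the $\iota_k$. The main obstacle, and the reason for choosing the one-point compactification, is continuity at $\infty$. For any finite quotient of $P \oplus F$ obtained from finite quotients of $P$ and of $F$, the latter factoring through some finite quotient $Y_{j}$ of $Y$ that collapses every $X_n$ with $n > N_{j}$ to a single point $\ast$, the composite $F_0 \xrightarrow{\iota_k} F \to \Lambda[Y_{j}]$ for $k > N_{j}$ factors as $F_0 \xrightarrow{\varepsilon} \Lambda \to \Lambda[Y_{j}]$, $\lambda \mapsto \lambda \cdot [\ast]$, where $\varepsilon$ is the augmentation. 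Hence for $n > N_{j} + 1$ the projection of $\phi(x)$ is $(0, (\varepsilon(\pi(x)) + \varepsilon(\rho(x)))[\ast]) = (0, [\ast])$ since $\varepsilon(x) = 1$; this is independent of $x \in X$ and agrees with the projection of $\phi(\infty)$. Thus $\phi|_Y$ is continuous and extends to a morphism in $PMod(\Lambda)$ by the universal property of $\Lambda \llbracket Y \rrbracket$.

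A symmetric construction yields an inverse $\psi : P \oplus F \to F$: include the outer summand $P$ as $\iota_1(P) \subset F$, and define $\psi$ on the $F$-summand by the formulas $\iota_n(\pi(x)) \mapsto \iota_{n+1}(\pi(x))$, $\iota_n(\rho(x)) \mapsto \iota_n(\rho(x))$, $[\infty] \mapsto [\infty]$; the same uniform-convergence computation at $\infty$ establishes continuity. Routine checks on the generating subspaces of $F$ and $P \oplus F$ verify $\psi \circ \phi = \id_F$ and $\phi \circ \psi = \id_{P \oplus F}$, so $P \oplus F \cong F$ is free, as required.
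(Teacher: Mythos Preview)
Your argument is correct and is, at heart, the same Eilenberg swindle as the paper's, but you implement it quite differently. The paper works entirely with categorical coproducts: it takes $F$ to be the countable coproduct $\bigoplus_{n}(Q\oplus P)$ in $PMod(\Lambda)$, observes that this is free on the coproduct $\coprod_{n} X$ in $Pro$ (by matching universal properties), and then simply regroups $P\oplus\bigoplus_{n}(Q\oplus P)\cong\bigoplus_{n}(P\oplus Q)$, which is free on the same profinite space. No explicit maps or continuity checks are needed.

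By contrast, you choose a concrete profinite basis space---the one-point compactification $Y$ of $\bigsqcup_n X$---and build the isomorphism $F\cong P\oplus F$ by hand, which forces you to verify continuity at $\infty$ via the augmentation identity $\varepsilon(\pi(x))+\varepsilon(\rho(x))=1$. Note that your $Y$ is genuinely smaller than the profinite coproduct $\coprod_n X$ used in the paper (not every family of maps $X_n\to W$ extends over $\infty$), so the two free modules $F$ are not the same; both choices work, but yours needs the extra analytic check. The paper's route is shorter and more robust; your route has the virtue of making the underlying space completely explicit and showing exactly where compactness enters. Two minor points of presentation: your description of $\psi$ via ``$\iota_n(\pi(x))\mapsto\iota_{n+1}(\pi(x))$'' is not literally a map on the generating set $Y$, so it would be cleaner to write $\psi_F(x)=\iota_{n+1}(\pi(x))+\iota_n(\rho(x))$ for $x\in X_n$ and invoke the universal property as you did for $\phi$; and the finite quotients of $F$ should strictly be $\Lambda_j[Y_j]$ for finite $\Lambda_j$, though this does not affect the argument.
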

\begin{proof}
Take $Q \in PMod(\Lambda)$ projective such that $P \oplus Q$ is free on some space $X$. Take $F$ to be a countably infinite direct sum of copies of $Q \oplus P$: by the universal properties of coproducts and free modules, $F$ is free on the (profinite completion of the) countably infinite disjoint union of copies of $X$. So is $$P \oplus F = P \oplus Q \oplus P \oplus Q \oplus \cdots,$$ for the same reason.
\end{proof}

Note that, in the same way, for a summand $P$ of a signed permutation module in $PMod(R \llbracket G \rrbracket)$ there is a signed permutation module $F$ such that $P \oplus F$ is a signed permutation module. It is this trick that allows us to define $\clh_R$ using finite length resolutions of signed permutation modules, rather than resolutions of summands of signed permutation modules, without losing anything: we can always replace a resolution of the latter kind with one of the former. In particular we get the following corollary.

We define the cohomological dimension of a profinite group $G$ over $R$, $\cd_RG$, to be $\pd_{R \llbracket G \rrbracket}R$, where $R$ has trivial $G$-action.

\begin{cor}
\label{fincdinhier}
Groups of finite cohomological dimension over $R$ are in $\clh_R\mathfrak{I}$.
\end{cor}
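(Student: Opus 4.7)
The plan is to reduce from finite projective dimension to a finite-length \emph{free} resolution, then note that free modules qualify as signed permutation modules with trivial stabilisers.

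First, suppose $\cd_R G = n < \infty$, so $\pd_{R\llbracket G \rrbracket} R = n$. Choose any free resolution $\cdots \to F_1 \to F_0 \to R \to 0$ of $R$ in $PMod(R\llbracket G \rrbracket)$ (such resolutions exist since every profinite $R\llbracket G \rrbracket$-module is a quotient of a free one), and let $K = \ker(F_{n-1} \to F_{n-2})$ (with the convention $F_{-1} = R$). By the standard characterisation of projective dimension, $K$ is projective.

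Next I would apply the Eilenberg swindle lemma proved just above to get a free module $F'$ with $K \oplus F'$ free. Splicing $F'$ into the resolution via the identity map yields the exact sequence
\begin{equation*}
0 \to K \oplus F' \to F_{n-1} \oplus F' \to F_{n-2} \to \cdots \to F_0 \to R \to 0,
\end{equation*}
where the new boundary sends $(k,f') \mapsto (\partial k, f')$ and the second is $(f,f') \mapsto \partial f$. Exactness is immediate, and all the modules are free: $F_{n-1} \oplus F'$ as a sum of two free modules, and $K \oplus F'$ by the choice of $F'$.

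Finally, I would observe that a free $R\llbracket G \rrbracket$-module on a profinite basis space $Y$ is the permutation module $R\llbracket G \times Y \rrbracket$, where $G$ acts by left multiplication on the first coordinate. Taking this as a signed permutation module with trivial twist (action on $X \cup -X$ by $g\cdot\pm(g',y) = \pm(gg',y)$), the quotient $\overline{X} \cong G \times Y$ carries a free $G$-action, so every stabiliser is trivial, i.e.\ lies in $\mathfrak{I}$. Hence the free resolution above is a finite length signed permutation resolution of $G$ with all stabilisers in $\mathfrak{I}$, witnessing $G \in \clh_R\mathfrak{I}$.

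There is no real obstacle: once the Eilenberg swindle is in hand the argument is essentially the well-known reduction from projective to free resolutions, together with the observation that a free module is tautologically a (signed) permutation module with trivial twist and free $G$-action.
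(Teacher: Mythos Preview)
Your proof is correct and follows essentially the same route as the paper: both arguments convert a finite projective resolution into a finite free resolution using the Eilenberg swindle, and then observe that free $R\llbracket G\rrbracket$-modules are permutation modules with trivial stabilisers. The only cosmetic difference is that you start from an infinite free resolution and truncate at the $n$th syzygy (which is projective by Schanuel), whereas the paper starts from a finite projective resolution and inductively replaces each $P_i$ by $P_i \oplus Q_i$ to make it free before applying the swindle at the top; either way one arrives at the same free resolution and the same conclusion.
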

\begin{proof}
Put $\Lambda = R \llbracket G \rrbracket$. Given a finite length projective resolution of $R$, $$0 \rightarrow P_n \rightarrow P_{n-1} \rightarrow \cdots \rightarrow P_0 \rightarrow R \rightarrow 0,$$ we can assume $P_0,\ldots,P_{n-1}$ are free. Indeed, one can see this inductively: if $P_0,\ldots,P_{i-1}$ are free, $i \leq n-1$, take some $Q$ such that $P_i \oplus Q$ is free, and replace $P_i,P_{i+1}$ with $P_i \oplus Q,P_{i+1} \oplus Q$, with the map between them given by $(P_{i+1} \rightarrow P_i)\oplus \id_Q$. Then take a free module $F$ such that $P_n \oplus F$ is free: $$0 \rightarrow P_n \oplus F \rightarrow P_{n-1} \oplus F \rightarrow P_{n-2} \rightarrow \cdots \rightarrow P_0 \rightarrow R \rightarrow 0$$ gives the required resolution.
\end{proof}

We now define a class of profinite groups: the elementary amenable profinite groups. The definition is entirely analogous to the hereditary definition of elementary amenable abstract groups given in \cite{H-L}. Let $\mathscr{X}_0 = \mathfrak{I}$, and let $\mathscr{X}_1$ be the class of profinite groups which are finitely generated abelian by finite. Now define $\mathscr{X}_\alpha = (\cll\mathscr{X}_{\alpha-1})\mathscr{X}_1$ for $\alpha$ a successor ordinal, and for $\alpha$ a limit define $\mathscr{X}_\alpha = \bigcup_{\beta < \alpha} \mathscr{X}_\beta$. Then $\mathscr{X} = \bigcup_\alpha \mathscr{X}_\alpha$ is the class of elementary amenable profinite groups. For $G \in \mathscr{X}$ we define the \emph{class} of $G$ to be the least $\alpha$ with $G \in \mathscr{X}_\alpha$.

Note that soluble profinite groups are clearly elementary amenable.

Let $\pi$ be a finite set of primes.

\begin{prop}
\label{elemamen}
Elementary amenable pro-$\pi$ groups are in $\widehat{\cllh_{\hat{\mathbb{Z}}}}\mathfrak{F}$.
\end{prop}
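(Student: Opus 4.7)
The plan is to prove this by transfinite induction on the class $\alpha$ of an elementary amenable pro-$\pi$ group $G$, the case $\alpha = 0$ being trivial and the limit case immediate from $\mathscr{X}_\alpha = \bigcup_{\beta<\alpha}\mathscr{X}_\beta$. So only the base case $\alpha=1$ and the successor step require work.

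For $\alpha=1$, I write $1 \rightarrow A \rightarrow G \rightarrow F \rightarrow 1$ with $A$ topologically finitely generated abelian and $F$ finite; then $A$ is again pro-$\pi$. The Sylow decomposition for abelian profinite groups yields $A \cong \prod_{p \in \pi} A_p$, and the structure theorem for topologically finitely generated abelian pro-$p$ groups shows that each $A_p \cong \mathbb{Z}_p^{n_p} \oplus (\text{finite})$; since $\pi$ is finite, $A \cong B \times F'$ with $B = \prod_{p \in \pi} \mathbb{Z}_p^{n_p}$ torsion-free and $F'$ finite. Starting from the length-one resolution $0 \rightarrow \hat{\mathbb{Z}}\llbracket \mathbb{Z}_p \rrbracket \xrightarrow{t-1} \hat{\mathbb{Z}}\llbracket \mathbb{Z}_p \rrbracket \rightarrow \hat{\mathbb{Z}} \rightarrow 0$ for each factor and taking completed tensor products over $\hat{\mathbb{Z}}$ gives a finite-length projective resolution of $\hat{\mathbb{Z}}$ over $\hat{\mathbb{Z}}\llbracket B \rrbracket$, so $\cd_{\hat{\mathbb{Z}}} B \leq \sum_{p \in \pi} n_p < \infty$; it is here that the finiteness of $\pi$ is essential. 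Corollary \ref{fincdinhier} then yields $B \in \clh_{\hat{\mathbb{Z}}}\mathfrak{I}$, Proposition \ref{subext+}(ii) gives $A \in (\widehat{\clh_{\hat{\mathbb{Z}}}}\mathfrak{I})\mathfrak{F} \leq \widehat{\clh_{\hat{\mathbb{Z}}}}\mathfrak{F}$, and Proposition \ref{subext+}(iii) gives $G \in (\widehat{\clh_{\hat{\mathbb{Z}}}}\mathfrak{F})\mathfrak{F} \leq \widehat{\clh_{\hat{\mathbb{Z}}}}\mathfrak{F} \leq \widehat{\cllh_{\hat{\mathbb{Z}}}}\mathfrak{F}$.

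For the successor step, suppose $G \in \mathscr{X}_\alpha$ is pro-$\pi$ and choose a closed normal $N \unlhd G$ with $N \in \cll\mathscr{X}_{\alpha-1}$ and $G/N \in \mathscr{X}_1$; both are pro-$\pi$, so the base case already places $G/N \in \widehat{\cllh_{\hat{\mathbb{Z}}}}\mathfrak{F}$. Any closed subgroup $K \leq N$ with $K \in \mathscr{X}_{\alpha-1}$ is a pro-$\pi$ group of smaller class, so by the inductive hypothesis $K \in \widehat{\cllh_{\hat{\mathbb{Z}}}}\mathfrak{F}$; thus $N \in \cll\widehat{\cllh_{\hat{\mathbb{Z}}}}\mathfrak{F}$. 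Since $\hat{\mathbb{Z}}$ is itself a signed permutation module on a single orbit with stabiliser $G$, the length-zero resolution $\hat{\mathbb{Z}} \rightarrow \hat{\mathbb{Z}} \rightarrow 0$ witnesses $\widehat{\cllh_R}\mathfrak{F} \leq \clh_R\widehat{\cllh_R}\mathfrak{F}$, whence $\cll\widehat{\cllh_R}\mathfrak{F} \leq \cll\clh_R\widehat{\cllh_R}\mathfrak{F} = \cllh_R\widehat{\cllh_R}\mathfrak{F} \leq \widehat{\cllh_R}\mathfrak{F}$; in particular $N \in \widehat{\cllh_{\hat{\mathbb{Z}}}}\mathfrak{F}$. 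Applying Proposition \ref{subext}(iii) to $1 \rightarrow N \rightarrow G \rightarrow G/N \rightarrow 1$ finishes the induction.

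The only real obstacle is the base case: one must provide genuine structural input about topologically finitely generated abelian pro-$\pi$ groups, namely that the torsion-free part has finite cohomological dimension over $\hat{\mathbb{Z}}$ (not merely over $\mathbb{Z}_p$ or $\hat{\mathbb{Z}}_\pi$), for which the explicit form $B \cong \prod_{p \in \pi} \mathbb{Z}_p^{n_p}$ and the finiteness of $\pi$ both play decisive roles. Once $\mathscr{X}_1 \cap \{\text{pro-}\pi\} \subseteq \widehat{\clh_{\hat{\mathbb{Z}}}}\mathfrak{F}$ is in hand, the successor step is formal bookkeeping built out of the closure properties of Propositions \ref{subext+} and \ref{subext} together with the observation that $\widehat{\cllh_R}\mathfrak{F}$ is stable under $\cll$.
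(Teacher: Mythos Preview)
Your overall strategy coincides with the paper's: transfinite induction on the class, handling the successor step by showing the $\mathscr{X}_1$-quotient lies in $\widehat{\clh_{\hat{\mathbb{Z}}}}\mathfrak{F}$ via finite cohomological dimension of its torsion-free abelian part, absorbing the $\cll\mathscr{X}_{\alpha-1}$ kernel using $\cll\widehat{\cllh_{\hat{\mathbb{Z}}}}\mathfrak{F} \leq \widehat{\cllh_{\hat{\mathbb{Z}}}}\mathfrak{F}$, and finishing with extension-closure (Proposition~\ref{subext}(iii)). The paper does not isolate $\alpha=1$ as a separate base case but folds the $\mathscr{X}_1$ argument into the general successor step; that is purely cosmetic.

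There is, however, a genuine error in your base case. The sequence
\[
0 \rightarrow \hat{\mathbb{Z}}\llbracket \mathbb{Z}_p \rrbracket \xrightarrow{\,t-1\,} \hat{\mathbb{Z}}\llbracket \mathbb{Z}_p \rrbracket \rightarrow \hat{\mathbb{Z}} \rightarrow 0
\]
is \emph{not} exact: multiplication by $t-1$ fails to be injective. Writing $\hat{\mathbb{Z}}\llbracket \mathbb{Z}_p \rrbracket \cong \prod_q \mathbb{Z}_q\llbracket \mathbb{Z}_p \rrbracket$, for each prime $q \neq p$ the compatible system of idempotents $e_n = p^{-n}\sum_{i=0}^{p^n-1} t^i \in \mathbb{Z}_q[\mathbb{Z}/p^n\mathbb{Z}]$ assembles to a nonzero idempotent $e_q \in \mathbb{Z}_q\llbracket \mathbb{Z}_p \rrbracket$ with $(t-1)e_q = 0$. (In fact this idempotent splits $\mathbb{Z}_q$ off as a direct summand of $\mathbb{Z}_q\llbracket \mathbb{Z}_p \rrbracket$, witnessing $\cd_q(\mathbb{Z}_p)=0$ for $q\neq p$.) So your Koszul-style tensor product does not yield a projective resolution over $\hat{\mathbb{Z}}\llbracket B \rrbracket$.

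Your conclusion $\cd_{\hat{\mathbb{Z}}} B < \infty$ is nonetheless correct: one has $\cd_{\hat{\mathbb{Z}}}(G)=\sup_q \cd_q(G)$, and $\cd_q(B)\leq \sum_{p\in\pi} n_p$ for every $q$ since $\cd_q(\mathbb{Z}_p)\leq 1$ for all $q$. The paper simply invokes \cite[Proposition 8.2.1, Theorem 11.6.9]{Wilson} for this step rather than attempting an explicit resolution. With that fix, your argument is complete and agrees with the paper's.
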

\begin{proof}
We use induction on the elementary amenable class $\alpha$; the case $\alpha=0$ is trivial. The case of limit ordinals is also trivial. So suppose $\alpha$ is a successor, and suppose $\mathscr{X}_{\alpha-1} \leq \widehat{\cllh_{\hat{\mathbb{Z}}}}\mathfrak{F}$. Then $\cll\mathscr{X}_{\alpha-1} \leq \cll\widehat{\cllh_{\hat{\mathbb{Z}}}}\mathfrak{F} \leq \cllh_{\hat{\mathbb{Z}}}\widehat{\cllh_{\hat{\mathbb{Z}}}}\mathfrak{F} = \widehat{\cllh_{\hat{\mathbb{Z}}}}\mathfrak{F}$. Suppose $G \in \mathscr{X}_\alpha$, and take a normal subgroup $G_1 \in \cll\mathscr{X}_{\alpha-1}$ such that $G/G_1$ is in $\mathscr{X}_1$. Now $G/G_1$ is virtually torsion-free finitely generated abelian, so it has a finite index subgroup which is torsion-free abelian and hence this subgroup has finite cohomological dimension by \cite[Proposition 8.2.1, Theorem 11.6.9]{Wilson}. Therefore by Corollary \ref{fincdinhier} it is in $\clh_{\hat{\mathbb{Z}}}\mathfrak{I} \leq \clh_{\hat{\mathbb{Z}}}\mathfrak{F}$, and hence $G/G_1$ is in $\clh_{\hat{\mathbb{Z}}}\mathfrak{F}$ too by Proposition \ref{subext+}(ii). Therefore $G \in \widehat{\cllh_{\hat{\mathbb{Z}}}}\mathfrak{F}$ by Proposition \ref{subext}(iii).
\end{proof}

Now we note that many elementary amenable profinite groups are prosoluble: these include soluble profinite groups, and by the Feit-Thompson theorem they include all elementary amenable pro-$2'$ groups, where $2'$ is the set of all primes but $2$.

\begin{cor}
\label{prosolea}
Elementary amenable prosoluble groups are in $\widehat{\cll'\clh_{\hat{\mathbb{Z}}}}\mathfrak{F}$.
\end{cor}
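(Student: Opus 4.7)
The plan is to realise an elementary amenable prosoluble group $G$ as a member of $\cll'\widehat{\cllh_{\hat{\mathbb{Z}}}}\mathfrak{F}$; the conclusion then follows from the inclusion $\cll'\widehat{\cllh_R}\mathfrak{X} \leq \widehat{\cll'\clh_R}\mathfrak{X}$ recorded in Section~\ref{hierpg}. To do this I approximate $G$ from within by a directed system of closed pro-$\pi$ subgroups, one for each finite set of primes $\pi$, and invoke Proposition~\ref{elemamen} to place each such subgroup in $\widehat{\cllh_{\hat{\mathbb{Z}}}}\mathfrak{F}$.

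The approximating subgroups come from Sylow--Hall theory for prosoluble groups. Using \cite{R-Z}, choose a Sylow basis $\{S_p\}_{p \text{ prime}}$ in $G$: a family of Sylow pro-$p$ subgroups that pairwise permute. For each finite set of primes $\pi$ the product $G_\pi := \prod_{p \in \pi} S_p$ is then a closed Hall pro-$\pi$ subgroup of $G$, and the family $\{G_\pi\}$ is directed by inclusion as $\pi$ grows. Each $G_\pi$ is a closed subgroup of $G$ and so is itself elementary amenable---a routine transfinite induction on the elementary amenable class shows $\mathscr{X}_\alpha$ is closed under closed subgroups, the only substantive point being that closed subgroups of topologically finitely generated abelian profinite groups are again of that form. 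With $\pi$ finite, Proposition~\ref{elemamen} then yields $G_\pi \in \widehat{\cllh_{\hat{\mathbb{Z}}}}\mathfrak{F}$.

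To show $\bigcup_\pi G_\pi$ is dense in $G$, I fix $N \unlhd_{\text{open}} G$ and let $\pi$ be the set of prime divisors of the integer $|G/N|$. Since $G_\pi$ is a Hall pro-$\pi$ subgroup, the supernatural index $|G : G_\pi|$ is a $\pi'$-number; hence $|G/N : G_\pi N/N|$ divides both this and the $\pi$-number $|G/N|$, so equals $1$, giving $G_\pi N = G$. Thus $G_\pi$ meets every coset of $N$, and since $N$ was arbitrary $\bigcup_\pi G_\pi$ is dense, so $G \in \cll'\widehat{\cllh_{\hat{\mathbb{Z}}}}\mathfrak{F}$.

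The main obstacle is really the appeal to Sylow--Hall theory for prosoluble groups: the existence of a Sylow basis in this setting is a nontrivial extension of P.~Hall's theorem from the finite case. Once that is taken as given, the rest is a combination of the finite-quotient density calculation and the routine inheritance of elementary amenability by closed subgroups.
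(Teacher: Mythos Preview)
Your proof is correct and follows the same approach as the paper: both use a Sylow basis for the prosoluble group $G$ to build a directed system of Hall pro-$\pi$ subgroups (the paper indexes them by the first $n$ primes, you by arbitrary finite $\pi$), then apply Proposition~\ref{elemamen} to each and conclude $G \in \cll'\widehat{\cllh_{\hat{\mathbb{Z}}}}\mathfrak{F}$. Your version is in fact more carefully written, making explicit both the density argument and the inheritance of elementary amenability by closed subgroups, which the paper leaves implicit.
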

\begin{proof}
We show that these groups are in $\cll'\widehat{\cllh_{\hat{\mathbb{Z}}}}\mathfrak{F}$. By \cite[Proposition 2.3.9]{R-Z}, prosoluble groups $G$ have a Sylow basis; that is, a choice $\{S_p:p \text{ prime}\}$ of one Sylow subgroup for each $p$ such that $S_pS_q=S_qS_p$ for each $p,q$. Therefore, writing $p_n$ for the $n$th prime, we have a subgroup $G_n=S_{p_1} \cdots S_{p_n}$ for each $n$, and hence a direct system $\{G_n\}$ of subgroups of $G$ whose union is dense in $G$. By Proposition \ref{elemamen} each $G_n$ is in $\widehat{\cllh_{\hat{\mathbb{Z}}}}\mathfrak{F}$, so we are done.
\end{proof}

Note that in fact this shows that, for any prosoluble group $G$ -- and hence any profinite group of odd order -- if each $G_n$ is in some $\widehat{\cll'\clh_R}\mathfrak{X}$, in the same notation as above, then $G$ is too.

Profinite groups acting on profinite trees with well-behaved stabilisers give further examples of groups in our class, in the spirit of \cite[2.2(iii)]{Krop}, though the profinite case seems to be rather harder to control here than the abstract one. See \cite[Chapter 9.2]{R-Z} for the definitions of (proper) pro-$\mathcal{C}$ free products with amalgamation.

\begin{lem}
Suppose $G_1,G_2,H$ are pro-$\mathcal{C}$ groups, where $\mathcal{C}$ is a class of finite groups closed under taking subgroups, quotients and extensions. Suppose we have $G_1,G_2,H \in \widehat{\cllh_R}\mathfrak{X}$ (or $\widehat{\cll'\clh_R}\mathfrak{X}$). Write $G_1 \ast_H G_2$ for the free pro-$\mathcal{C}$ product of $G_1$ and $G_2$ with amalgamation by $H$, and suppose it is proper. Then $G_1 \ast_H G_2 \in \widehat{\cllh_R}\mathfrak{X}$ (or $\widehat{\cll'\clh_R}\mathfrak{X}$).
\end{lem}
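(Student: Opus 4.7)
My plan is to produce a length-one signed (in fact unsigned) permutation resolution of $R$ as an $R\llbracket G_1 \ast_H G_2 \rrbracket$-module and then apply one step of $\clh_R$ on top of the hierarchy level already attained by $G_1, G_2, H$.

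First, since $G_1 \ast_H G_2$ is assumed proper as a pro-$\mathcal{C}$ free product with amalgamation, I would invoke the standard pro-$\mathcal{C}$ tree construction of \cite[Chapter 9]{R-Z}: there is a profinite $G$-tree $T$ (where $G = G_1 \ast_H G_2$) with vertex set $V(T) = G/G_1 \sqcup G/G_2$ and edge set $E(T) = G/H$ as profinite $G$-spaces, and the standard augmented chain complex
\begin{equation*}
0 \rightarrow R \llbracket E(T) \rrbracket \rightarrow R \llbracket V(T) \rrbracket \rightarrow R \rightarrow 0
\end{equation*}
is exact in $PMod(R \llbracket G \rrbracket)$. (This is obtained by tensoring the $\hat{\mathbb{Z}}$-version in \cite{R-Z} with $R$, which is exact because the terms are $\hat{\mathbb{Z}}$-free; this is exactly the trick already used in Lemma \ref{Ralgebra}.) The middle and left terms are unsigned permutation modules, hence signed permutation modules, with point-stabilisers equal to $G$-conjugates of $G_1$ and $G_2$ (for the vertices) and to $G$-conjugates of $H$ (for the edges).

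Now suppose $G_1, G_2, H \in \widehat{\cllh_R}\mathfrak{X}$. Choose an ordinal $\alpha$ with $G_1, G_2, H \in (\cllh_R)_\alpha\mathfrak{X}$; since conjugation is an automorphism of $G$, every $G$-conjugate of $G_1$, $G_2$, or $H$ is isomorphic to one of them and hence also lies in $(\cllh_R)_\alpha\mathfrak{X}$. The exact sequence above is therefore a finite length signed permutation resolution of $R$ with all stabilisers in $(\cllh_R)_\alpha\mathfrak{X}$, so by definition $G \in \clh_R(\cllh_R)_\alpha\mathfrak{X} \subseteq (\cllh_R)_{\alpha+1}\mathfrak{X} \subseteq \widehat{\cllh_R}\mathfrak{X}$. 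The proof for $\widehat{\cll'\clh_R}\mathfrak{X}$ is identical, using $\clh_R \leq \cll'\clh_R$ at the last step.

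The main obstacle is purely the first one: extracting the Mayer--Vietoris sequence of permutation modules from the pro-$\mathcal{C}$ standard tree $T$ in a form valid over an arbitrary commutative profinite ring $R$. Once one has the $\hat{\mathbb{Z}}$-version from \cite[Chapter 9]{R-Z}, the passage to $R$ is formal because the terms are free over $\hat{\mathbb{Z}}$, but the verification that $V(T)$ and $E(T)$ really are the profinite $G$-spaces $G/G_1 \sqcup G/G_2$ and $G/H$ (rather than some profinite completion thereof) is where the hypothesis that the amalgamated product is proper actually gets used. Everything after that is bookkeeping within the hierarchy.
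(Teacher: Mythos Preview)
Your proposal is correct and follows essentially the same approach as the paper: the paper's proof is a one-liner citing \cite[Theorem 2.1]{G-R} for the Mayer--Vietoris permutation resolution, while you cite the equivalent construction in \cite[Chapter 9]{R-Z} and then spell out the tensoring-to-$R$ step and the hierarchy bookkeeping that the paper leaves implicit.
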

\begin{proof}
We get a finite length permutation resolution from \cite[Theorem 2.1]{G-R}.
\end{proof}

We finish this section by listing, for convenience, some groups in $\widehat{\cll'\clh_{\hat{\mathbb{Z}}}}\mathfrak{F}$.
\begin{itemize}
\item Finite groups (with the discrete topology) are in $\mathfrak{F}$.
\item Profinite groups of finite virtual cohomological dimension over $\hat{\mathbb{Z}}$ are in $\clh_{\hat{\mathbb{Z}}}\mathfrak{F}$, by Corollary \ref{fincdinhier} and Proposition \ref{subext}(ii). Hence:
\item Free profinite groups are in $\clh_{\hat{\mathbb{Z}}}\mathfrak{F}$.
\item Soluble profinite groups are in $(\cll'\clh_{\hat{\mathbb{Z}}})_\omega\mathfrak{F}$, by Corollary \ref{prosolea}.
\item Elementary amenable pro-$p$ groups are in $\widehat{\cll'\clh_{\hat{\mathbb{Z}}}}\mathfrak{F}$ for all $p$, by Proposition \ref{elemamen}, and elementary amenable profinite groups of odd order are too, by Corollary \ref{prosolea}.
\end{itemize}

Finally, for $G$ a compact $p$-adic analytic group, $G$ is a virtual Poincar\'{e} duality group at the prime $p$ by \cite[Theorem 5.1.9]{S-W}, and hence by definition $G$ has finite virtual cohomological dimension over $\mathbb{Z}_p$, and so $G \in \clh_{\mathbb{Z}_p}\mathfrak{F}$. In particular this includes $\mathbb{Z}_p$-linear groups by \cite[Proposition 8.5.1]{Wilson}.

\section{Type L Systems}
\label{typeIsys}

Recall that a $\Lambda$-module $M$ is said to be \emph{of type $\FP_n$ over $\Lambda$} if it has a projective resolution $P_\ast$ for which $P_0, \ldots, P_n$ are finitely generated; it is said to be of type $\FP_\infty$ over $\Lambda$ if it of type $\FP_n$ over $\Lambda$ for all $n$. This is equivalent to having a projective resolution with $P_n$ finitely generated for all $n$, by \cite[Proposition 1.5]{Bieri}. When the choice of $\Lambda$ is clear, we will just say $M$ is of type $\FP_n$.

We assume a familiarity with the definitions of $\Ext$ functors with both variables profinite: given $M,N \in PMod(\Lambda)$, we define $\Ext_\Lambda^n(M,N)$ by $H^n(\Hom_\Lambda(P_\ast,N))$, where $P_\ast$ is a projective resolution of $M$. This makes each $\Ext_\Lambda^n$ a functor of both variables, and we get the usual long exact sequences. To establish a notational convention: when the first variable $M$ is specified to be a $\Lambda$-module of type $\FP_\infty$, we can and will always take $\Ext_\Lambda^n$ to be a functor $PMod(\Lambda) \times PMod(\Lambda) \rightarrow PMod(R)$ for each $n$. This is because we can take each $P_n$ finitely generated and then $\Hom_\Lambda(P_n,N)$ has a natural profinite topology, as in \cite[Section 3.7]{S-W}. When we want our $\Ext$ groups to be abstract $U(R)$-modules, we will compose this with the forgetful functor $U: PMod(R) \rightarrow Mod(U(R))$ which forgets the topology. On the other hand, when we just know that $M \in PMod(\Lambda)$, each $\Ext_\Lambda^n$ will be thought of as a functor $PMod(\Lambda) \times PMod(\Lambda) \rightarrow Mod(U(R))$.

To be able to use the hierarchy of groups defined in the last section, we want to relate the construction of a group within the hierarchy to its cohomology, and so gain results about the structure of the group, analogously to \cite{Krop}. Specifically, this section will deal with the interaction of cohomology and the closure operation $\cll$, and the next one with the interaction of cohomology and $\clh_R$.

See \cite[Definition 2.6.13]{Weibel} for the definition of direct systems and direct limits. They are exact in categories of abstract modules.

Let $R$ be a commutative profinite ring and $\Lambda$ a profinite $R$-algebra. We call a direct system $\{A^i:i \in I\}$ of $\Lambda$-modules a Type L system if there is some $i_0 \in I$ such that the maps $f^{i_0i}: A^{i_0} \rightarrow A^i$ for each $i \geq i_0$ are all epimorphisms. Then, considering $\{A^i\}$ as a direct system in $TMod(\Lambda)$, $$U(\varinjlim_{TMod(\Lambda)}A^i) = \varinjlim_{Mod(U(\Lambda))} U(A^i) = U(A^{i_0})/\bigcup_{i \geq i_0} \ker(Uf^{i_0i})$$ by \cite[Lemma 2.6.14]{Weibel} and the remark after Proposition \ref{limandcolim}. We can see from this, and from the proof of Proposition \ref{limandcolim}, that $\varinjlim_{TMod(\Lambda)}A^i$ has as its underlying module $U(A^{i_0})/\bigcup_{i \geq i_0} \ker(Uf^{i_0i})$, with the strongest topology making each map $$f^i: A^i \rightarrow U(A^{i_0})/\bigcup_{i \geq i_0} \ker(Uf^{i_0i})$$ continuous, such that $U(A^{i_0})/\bigcup_{i \geq i_0} \ker(Uf^{i_0i})$ is made into a topological $\Lambda$-module. But the quotient topology induced by the map $f^{i_0}$ satisfies these conditions: it makes $U(A^{i_0})/\bigcup_{i \geq i_0} \ker(Uf^{i_0i})$ into a topological $\Lambda$-module by \cite[III.6.6]{Bourbaki}; it makes $f^{i_0}$ continuous; it makes each $f^i$, $i \geq i_0$ continuous because, given an open set $U$ in $A^{i_0}/\bigcup ker(f^{i_0i})$, $$(f^{i_0})^{-1}(U) = (f^{i_0i})^{-1}(f^{i})^{-1}(U)$$ is open in $A^{i_0}$, and $A^i$ has the quotient topology coming from $f^{i_0i}$ (because all the modules are compact and Hausdorff), and by the definition of the quotient topology this means that $(f^{i})^{-1}(U)$ is open in $A^i$, as required. Hence $$\varinjlim_{TMod(\Lambda)}A^i \cong A^{i_0}/\bigcup_{i \geq i_0} \ker(f^{i_0i})$$ as topological modules. Note that this quotient is compact, as the continuous image of $A^{i_0}$.

Recall from Corollary \ref{pccomp} that we know $\varinjlim_{PMod(\Lambda)}A^i$ is the profinite completion of $\varinjlim_{TMod(\Lambda)}A^i$. Hence there is a canonical homomorphism $$\phi: \varinjlim_{TMod(\Lambda)}A^i \rightarrow \varinjlim_{PMod(\Lambda)}A^i.$$ Since $\varinjlim_{PMod(\Lambda)}A^i$ is Hausdorff, $\ker(\phi)=\phi^{-1}(0)$ is closed in $\varinjlim_{TMod(\Lambda)}A^i$; in particular, $\ker(\phi)$ contains the closure of $\{0\}$ in $\varinjlim_{TMod(\Lambda)}A^i$. Hence $\phi$ factors (uniquely) as $$\varinjlim_{TMod(\Lambda)}A^i \cong A^{i_0}/\bigcup_{i \geq i_0} \ker(f^{i_0i}) \xrightarrow{\psi} A^{i_0}/\overline{\bigcup_{i \geq i_0} \ker(f^{i_0i})} \rightarrow \varinjlim_{PMod(\Lambda)}A^i.$$ Now $A^{i_0}/\overline{\bigcup_{i \geq i_0} \ker(f^{i_0i})}$ is a quotient of a profinite $\Lambda$-module by a closed submodule, so it is profinite; hence, by the universal property of profinite completions, $\psi$ factors uniquely through $\phi$. It follows that $\varinjlim_{PMod(\Lambda)}A^i \cong A^{i_0}/\overline{\bigcup_{i \geq i_0} \ker(f^{i_0i})}$.

Given $A \in PMod(\Lambda)$, we can think of $A$ as an object of $PMod(R)$ by restriction. This functor is representable in the sense that it is given by $\Hom_{\Lambda}(\Lambda,-): PMod(\Lambda) \rightarrow PMod(R)$.

\begin{lem}
\label{typeIres}
Direct limits of Type L systems commute with restriction. Explicitly, let $\{A^i\}$ be a Type L system in $PMod(\Lambda)$. Then $$\varinjlim_{PMod(R)} \Hom_{\Lambda}(\Lambda, A^i) = \Hom_{\Lambda}(\Lambda, \varinjlim_{PMod(\Lambda)} A^i).$$
\end{lem}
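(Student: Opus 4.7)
The plan is to exploit the explicit description of profinite direct limits of Type L systems derived in the paragraphs immediately preceding the lemma, applied on both sides of the claimed equality.

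First I would observe that $\{\Hom_\Lambda(\Lambda, A^i)\}$ is again a Type L system, this time in $PMod(R)$. Indeed, $\Hom_\Lambda(\Lambda, -)$ is just the restriction functor, leaving the underlying topological space and abelian group structure unchanged. So the maps $f^{i_0 i}$ for $i \geq i_0$ remain continuous surjections of profinite $R$-modules.

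Second, I would apply the formula
$$\varinjlim_{PMod(\Lambda)} A^i \;\cong\; A^{i_0}\big/\overline{\textstyle\bigcup_{i \geq i_0} \ker(f^{i_0 i})}$$
with the quotient topology, established in the paragraph before the lemma, in both $PMod(\Lambda)$ and $PMod(R)$. On the right-hand side of the lemma, the restriction functor, being right adjoint to $\Lambda \hat{\otimes}_R -$, preserves limits but we need it to play nicely with the specific description of this colimit: the underlying profinite $R$-module of $\varinjlim_{PMod(\Lambda)} A^i$ is $A^{i_0}/\overline{\bigcup \ker(f^{i_0 i})}$ equipped with the quotient topology inherited from $A^{i_0}$, and this coincides literally with the quotient one gets by computing $\varinjlim_{PMod(R)} \Hom_\Lambda(\Lambda, A^i)$. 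The reason is that (i) the kernel of each $f^{i_0 i}$ as a subset of $A^{i_0}$ does not depend on whether $f^{i_0 i}$ is viewed as a $\Lambda$-module or an $R$-module morphism, and (ii) the topology on $A^{i_0}$ is the same under restriction, so the closure of the union of these kernels is also the same.

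Third, I would verify that the canonical homomorphism
$$\varinjlim_{PMod(R)} \Hom_\Lambda(\Lambda, A^i) \;\longrightarrow\; \Hom_\Lambda\bigl(\Lambda, \varinjlim_{PMod(\Lambda)} A^i\bigr),$$
which exists by the universal property applied to the structure maps $\Hom_\Lambda(\Lambda, A^i) \to \Hom_\Lambda(\Lambda, \varinjlim A^j)$, is precisely this identification: in both cases the structure map out of $A^i$ factors through the quotient $A^{i_0} \twoheadrightarrow A^{i_0}/\overline{\bigcup \ker(f^{i_0 i})}$ by choosing (well-defined modulo kernels) an $f^{i_0 i}$-preimage. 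The argument is thus almost immediate given the machinery of the preceding paragraphs; the only step that required care was setting up those formulae, which is already done. No substantial obstacle is anticipated.
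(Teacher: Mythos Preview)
Your proposal is correct and follows exactly the paper's approach: both sides are identified with the quotient $A^{i_0}/\overline{\bigcup_{i \geq i_0} \ker(f^{i_0 i})}$ with the quotient topology, using the explicit description of Type L direct limits established immediately before the lemma. The paper's proof is a one-liner stating precisely this; your version simply spells out in more detail why the kernels, their closure, and the quotient topology are unchanged under restriction, and checks that the canonical comparison map realises this identification.
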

\begin{proof}
By our construction of direct limits in $PMod(\Lambda)$, both sides are just the restriction to $PMod(R)$ of $A^{i_0}/\overline{\bigcup_{i \geq i_0} \ker(f^{i_0i})}$, given the quotient topology.
\end{proof}

It follows by additivity that $$\varinjlim_{PMod(R)} \Hom_{\Lambda}(P, A^i) = \Hom_{\Lambda}(P, \varinjlim_{PMod(\Lambda)} A^i)$$ for all finitely generated projective $P \in PMod(\Lambda)$.

\begin{prop}
\label{epitypeI}
Suppose $M \in PMod(\Lambda)$ is of type $\FP_\infty$, and let $\{A^i\}$ be a Type L system in $PMod(\Lambda)$. Then for each $n$ we have an epimorphism $$\varinjlim_{PMod(R)} \Ext_\Lambda^n(M,A^i) \rightarrow \Ext_\Lambda^n(M, \varinjlim_{PMod(\Lambda)} A^i).$$
\end{prop}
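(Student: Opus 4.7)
The plan is to reduce the claim about $\Ext$-groups to a statement about cohomology of a direct system of cochain complexes via a good resolution, and then use a long-exact-sequence/four-lemma comparison to pinpoint the main technical obstacle. Since $M$ is of type $\FP_\infty$, I would choose a resolution $P_\ast\to M$ with each $P_n$ finitely generated projective. By Lemma~\ref{typeIres}, extended from $\Lambda$ to any finitely generated projective summand of $\Lambda^k$ by additivity, there is a natural isomorphism of cochain complexes in $PMod(R)$,
\[
\Hom_\Lambda\bigl(P_\ast,\varinjlim_{PMod(\Lambda)} A^i\bigr)\;\cong\;\varinjlim_{PMod(R)}\Hom_\Lambda(P_\ast,A^i),
\]
and projectivity of each $P_n$ makes $\{\Hom_\Lambda(P_n,A^i)\}_i$ itself a Type L system in $PMod(R)$ with the same $i_0$ as $\{A^i\}$, since $A^{i_0}\to A^i$ epi implies $\Hom_\Lambda(P_n,A^{i_0})\to\Hom_\Lambda(P_n,A^i)$ epi.

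I would then exploit the short exact sequence $0\to K\to A^{i_0}\to\varinjlim A^j\to 0$, where $K=\overline{\bigcup_{i\geq i_0}\ker f^{i_0 i}}$, together with the family $0\to\ker f^{i_0 i}\to A^{i_0}\to A^i\to 0$ for $i\geq i_0$; these form a morphism of short exact sequences, identity on the common middle term. Applying $\Ext^\ast_\Lambda(M,-)$ produces a morphism of long exact sequences. Taking $\varinjlim_i$ of the upper row preserves exactness on abstract $U(R)$-modules, and comparison with the lower row via the four lemma (with identity verticals on $\Ext^\ast_\Lambda(M,A^{i_0})$) reduces the desired epimorphism $\varinjlim\Ext^n_\Lambda(M,A^i)\twoheadrightarrow\Ext^n_\Lambda(M,\varinjlim A^j)$ to the epimorphism
\[
\varinjlim_{PMod(R)}\Ext^{n+1}_\Lambda(M,\ker f^{i_0 i})\longrightarrow\Ext^{n+1}_\Lambda(M,K).
\]
Because both sides are profinite and the map is continuous, it suffices to show the image is dense.

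The hard part is exactly this density claim, which is the point where the closure in the definition of $K$ starts to bite: $\{\ker f^{i_0 i}\}$ is a direct system of closed inclusions whose union is only dense in $K$, not equal to it. The plan is to handle it by a chain-level density argument, whose key input is that $\bigcup_i\Hom_\Lambda(P_n,\ker f^{i_0 i})$ is dense in $\Hom_\Lambda(P_n,K)$ inside $\Hom_\Lambda(P_n,A^{i_0})$. This follows from the density of $\bigcup\ker f^{i_0 i}$ in $K$ combined with finite generation of $P_n$: writing $P_n$ as a summand of some $\Lambda^{m_n}$, homomorphisms from $P_n$ correspond to tuples in a direct power and one approximates coordinatewise. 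I would then use this to show that every cocycle in $\Hom_\Lambda(P_{n+1},K)$ can be approximated, modulo any prescribed open submodule, by a cocycle factoring through $\Hom_\Lambda(P_{n+1},\ker f^{i_0 k})$ for some $k$, and a standard profinite-limit argument packaging the approximations then yields the required density, completing the proof.
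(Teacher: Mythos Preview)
Your opening paragraph is correct and matches the paper's first stage: Lemma~\ref{typeIres}, extended by additivity to finitely generated projectives, gives $\Ext^n_\Lambda(M,\varinjlim A^i)=H^n\bigl(\varinjlim_{PMod(R)}\Hom_\Lambda(P_\ast,A^i)\bigr)$. The difficulty is that you then abandon this identification for a four-lemma detour that creates a new problem no easier than the original one.

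There is a genuine gap in the density argument. You correctly observe that $\bigcup_i\Hom_\Lambda(P_{n+1},\ker f^{i_0 i})$ is dense in $\Hom_\Lambda(P_{n+1},K)$, but you then assert that a cocycle $\phi$ can be approximated by a \emph{cocycle} with values in some $\ker f^{i_0 k}$. This is not justified: a nearby map $\psi$ need not satisfy $\psi\circ d_{n+1}=0$, and lifting the reduction $\bar\phi$ modulo an open submodule to $\tilde\psi:P_{n+1}\to\ker f^{i_0 k}$ via projectivity of $P_{n+1}$ only forces $\tilde\psi\circ d_{n+1}$ to land in a small submodule, not to vanish. Correcting $\tilde\psi$ to a genuine cocycle inside $\ker f^{i_0 k}$ is itself a cohomological lifting problem you have not controlled. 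There is also a categorical mismatch in the reduction: the four lemma needs exactness, so you take abstract direct limits for the upper row, but the reduced claim is then stated for the profinite $\varinjlim_{PMod(R)}$, and surjectivity (or dense image) of the latter does not yield the abstract surjectivity the four lemma requires. In effect your reduction trades a Type~L system for a directed system of closed inclusions with merely dense union --- a situation with strictly less structure.

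The paper instead stays at the chain level and uses only right-exactness of $\varinjlim_{PMod(R)}$. Applied to $0\to\ker\Hom_\Lambda(f_n,A^i)\to\Hom_\Lambda(P_n,A^i)\to\Hom_\Lambda(P_{n+1},A^i)$, right-exactness yields an epimorphism $\varinjlim\ker\Hom_\Lambda(f_n,A^i)\twoheadrightarrow\ker\bigl(\varinjlim\Hom_\Lambda(f_n,A^i)\bigr)$, and a direct Five Lemma comparison of the cokernel presentations of $\varinjlim_{PMod(R)}\Ext^n_\Lambda(M,A^i)$ and of $H^n\bigl(\varinjlim\Hom_\Lambda(P_\ast,A^i)\bigr)$ then gives the desired epimorphism. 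No approximation of cocycles, and no passage through the system $\{\ker f^{i_0 i}\}$, is needed.
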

\begin{proof}
We show this in two stages. Take a projective resolution $$\cdots \rightarrow P_2 \xrightarrow{f_1} P_1 \xrightarrow{f_0} P_0 \rightarrow 0$$ of $M$ with each $P_n$ finitely generated. We will show first that $$H^n(\varinjlim_{PMod(R)} \Hom_\Lambda(P_\ast,A^i)) = \Ext_\Lambda^n(M, \varinjlim_{PMod(\Lambda)} A^i).$$ To see this, consider the commutative diagram
\[
\xymatrix{0 \ar[r] & \varinjlim_{PMod(R)} \Hom_\Lambda(P_0,A^i) \ar[r] \ar[d] & \varinjlim_{PMod(R)} \Hom_\Lambda(P_1,A^i) \ar[r] \ar[d] & \cdots \\
0 \ar[r] & \Hom_{\Lambda}(P_0, \varinjlim_{PMod(\Lambda)} A^i) \ar[r] & \Hom_{\Lambda}(P_1, \varinjlim_{PMod(\Lambda)} A^i) \ar[r] & \cdots}
\]
in $PMod(R)$. The homology of the top row is $$H^n(\varinjlim_{PMod(R)} \Hom_\Lambda(P_\ast,A^i)),$$ the homology of the bottom row is $\Ext_\Lambda^n(M, \varinjlim_{PMod(\Lambda)} A^i)$, and the previous lemma shows that the vertical maps are all isomorphisms.

The second stage is to give epimorphisms $$\varinjlim_{PMod(R)} \Ext_\Lambda^n(M,A^i) \rightarrow H^n(\varinjlim_{PMod(R)} \Hom_\Lambda(P_\ast,A^i)).$$ Recall that $\varinjlim_{PMod(R)}$ is right-exact, so that we get an exact sequence
\begin{align*}
\varinjlim_{PMod(R)}\ker(\Hom_\Lambda(f_n,A^i)) &\rightarrow \varinjlim_{PMod(R)} \Hom_\Lambda(P_n,A^i) \\
&\rightarrow \varinjlim_{PMod(R)} \Hom_\Lambda(P_{n+1},A^i),
\end{align*}
and hence an epimorphism $$\varinjlim_{PMod(R)}\ker(\Hom_\Lambda(f_n,A^i)) \rightarrow \ker(\varinjlim_{PMod(R)} \Hom_\Lambda(f_n,A^i)).$$ Now consider the commutative diagram
\[
\xymatrix{\varinjlim_{PMod(R)} \Hom_\Lambda(P_{n-1},A^i) \ar[r] \ar[d]^\cong & \varinjlim_{PMod(R)}\ker(\Hom_\Lambda(f_n,A^i)) \ar@{->>}[d] \\
\varinjlim_{PMod(R)} \Hom_\Lambda(P_{n-1},A^i) \ar[r] & \ker(\varinjlim_{PMod(R)} \Hom_\Lambda(f_n,A^i)) \\
\ar[r] & \varinjlim_{PMod(R)} \Ext_\Lambda^n(M,A^i) \ar[r] \ar[d] & 0 \\
\ar[r] & H^n(\varinjlim_{PMod(R)} \Hom_\Lambda(P_\ast,A^i)) \ar[r] & 0}
\]
whose top row is exact because $\varinjlim_{PMod(R)}$ is right-exact, and whose bottom row is exact by definition of homology. It follows by the Five Lemma that $$\varinjlim_{PMod(R)} \Ext_\Lambda^n(M,A^i) \rightarrow H^n(\varinjlim_{PMod(R)} \Hom_\Lambda(P_\ast,A^i))$$ is an epimorphism, as required.
\end{proof}

The next lemma will allow us to make new Type L systems from old ones.

\begin{lem}
\label{typeI+}
Suppose $G \in PGrp$. Suppose $M \in PMod(R \llbracket G \rrbracket)$ is projective as an $R$-module, by restriction, and let $\{A^i\}$ be a Type L system in $PMod(R \llbracket G \rrbracket)$. Then $\{M \hat{\otimes}_R A^i\}$ is a Type L system in $PMod(R \llbracket G \rrbracket)$, where each $M \hat{\otimes}_R A^i$ is given the diagonal $G$-action.
\end{lem}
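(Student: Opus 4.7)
The plan is to check the two defining conditions directly. First, one must verify that each $M \hat{\otimes}_R A^i$, with the diagonal $G$-action, is indeed an object of $PMod(R \llbracket G \rrbracket)$, and that the maps $\id_M \hat{\otimes} f^{ij}: M \hat{\otimes}_R A^i \to M \hat{\otimes}_R A^j$ assemble into a direct system indexed by the same poset $I$. Both of these are routine from the functoriality of the completed tensor product and standard facts about diagonal actions; nothing beyond the formal properties established in Section \ref{progrm} and the reference \cite[Proposition 7.7.8]{Wilson} is needed here.

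The content of the lemma is the second condition: we must exhibit an index past which all structure maps are epimorphisms. I would take the same $i_0 \in I$ that witnesses the Type L property for $\{A^i\}$, and show that for each $i \geq i_0$ the map
\[
\id_M \hat{\otimes} f^{i_0 i}: M \hat{\otimes}_R A^{i_0} \longrightarrow M \hat{\otimes}_R A^i
\]
is an epimorphism in $PMod(R \llbracket G \rrbracket)$. The key input is that $M$ is $R$-projective, hence $R$-flat in the profinite sense, so that $M \hat{\otimes}_R -$ is (right-)exact on $PMod(R)$; in particular it sends epimorphisms to epimorphisms.

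The main --- really the only --- obstacle is justifying this preservation of surjections in the profinite category. I would argue it as follows: $f^{i_0 i}$ is surjective in the topological sense because epimorphisms in $PMod(R \llbracket G \rrbracket)$ between compact Hausdorff objects are just continuous surjections. Writing $M$ as a direct summand of a free profinite $R$-module $R \llbracket X \rrbracket$ and observing that $R \llbracket X \rrbracket \hat{\otimes}_R A = R \llbracket X, A \rrbracket$ (the module of continuous maps from $X$ to $A$, in the notation of \cite[Exercise 5.5.5]{R-Z}), one sees at once that $R \llbracket X \rrbracket \hat{\otimes}_R -$ preserves surjections; splitting off $M$ as a summand then preserves this property. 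Alternatively, one can check that $\id_M \hat{\otimes} f^{i_0 i}$ has dense image (because the pure tensors $m \hat{\otimes} a$ span a dense submodule, and every $a \in A^i$ is hit by $f^{i_0 i}$) and then use that its image is a continuous image of a compact space in a Hausdorff space, hence closed. Either route gives surjectivity, and the lemma follows.
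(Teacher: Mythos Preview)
Your argument correctly verifies the literal definition of a Type L system: since the completed tensor product is right exact, $M \hat{\otimes}_R -$ preserves epimorphisms (this holds for any $M$, so your appeal to $R$-projectivity here is unnecessary), and the same $i_0$ works for the new system. As a proof of the statement exactly as written, this is fine.

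However, this is not what the paper's proof does, and it is not what the paper actually needs. The paper's proof opens with ``Because $M \hat{\otimes}_R -$ preserves epimorphisms, we just need to show that it commutes with direct limits of Type L systems''---that is, it treats epimorphism-preservation as already known and spends all its effort on establishing
\[
\varinjlim_{PMod(R)} (M \hat{\otimes}_R A^i) \;\cong\; M \hat{\otimes}_R \varinjlim_{PMod(R)} A^i.
\]
In the profinite setting this commutation is \emph{not} automatic: direct limits in $PMod(R)$ involve a profinite-completion step, and there is no general reason for $\hat{\otimes}_R$ to commute with that. The paper proves it using the explicit description of a Type L direct limit as $A^{i_0}/\overline{\bigcup \ker f^{i_0 i}}$, exactness of $M \hat{\otimes}_R -$ (this is where $R$-projectivity of $M$ is genuinely used, to identify $\ker(\id_M \hat{\otimes} f^{i_0 i})$ with $M \hat{\otimes}_R \ker f^{i_0 i}$), and a density argument via inverse limits.

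This commutation is exactly what Theorem~\ref{neq0ext} requires when it cites the lemma as ``$M \hat{\otimes}_R -$ preserves Type L structures'': one must know not only that $\{M \hat{\otimes}_R R \llbracket G/H_j;\tau_j \rrbracket\}$ is Type L, but that its direct limit is $M \hat{\otimes}_R R \llbracket G/H;\tau \rrbracket$, so that Proposition~\ref{epitypeI} applies with the correct target. The lemma's statement is arguably under-specified, but the proof and the downstream use make the intended content clear, and your argument does not touch it.
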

\begin{proof}
Because $M \hat{\otimes}_R -$ preserves epimorphisms, we just need to show that it commutes with direct limits of Type L systems; that is, we have to show that if $\{A^i,f^{ij}\}$ is a Type L system, then $$M \hat{\otimes}_R \varinjlim_{PMod(R)} A^i \cong \varinjlim_{PMod(R)} (M \hat{\otimes}_R A^i).$$ We have a canonical homomorphism $$g: \varinjlim_{PMod(R)} (M \hat{\otimes}_R A^i) \rightarrow M \hat{\otimes}_R \varinjlim_{PMod(R)} A^i;$$ it is an epimorphism because the epimorphism $$g^{i_0}: M \hat{\otimes}_R A^{i_0} \rightarrow M \hat{\otimes}_R \varinjlim_{PMod(R)} A^i$$ factors as $$M \hat{\otimes}_R A^{i_0} \xrightarrow{h} \varinjlim_{PMod(R)} (M \hat{\otimes}_R A^i) \xrightarrow{g} M \hat{\otimes}_R \varinjlim_{PMod(R)} A^i.$$ In fact we will show that $\ker g^{i_0} \subseteq \ker h$; this implies that $g$ is injective, as required.

Now $$\ker g^{i_0} = M \hat{\otimes}_R \ker f^{i_0} = M \hat{\otimes}_R \overline{\bigcup_{i \geq i_0} \ker f^{i_0i}}$$ by the exactness of $M \hat{\otimes}_R -$ (because $M$ is $R$-projective) and the construction of direct limits of Type L systems; moreover, writing $g^{i_0i}$ for $M \hat{\otimes}_R A^{i_0} \rightarrow M \hat{\otimes}_R A^i$, we have by exactness of $M \hat{\otimes}_R -$ again that $\ker g^{i_0i} = M \hat{\otimes}_R \ker f^{i_0i}$. Hence, by the construction of direct limits of Type L systems, $$\ker h = \overline{\bigcup_{i \geq i_0} \ker g^{i_0i}} = \overline{\bigcup_{i \geq i_0} M \hat{\otimes}_R \ker f^{i_0i}}.$$ Thus we are reduced to showing that the subspace $$\bigcup_{i \geq i_0} M \hat{\otimes}_R \ker f^{i_0i} \subseteq M \hat{\otimes}_R \overline{\bigcup_{i \geq i_0} \ker f^{i_0i}}$$ is dense. This can be seen by considering inverse limits: if $M = \varprojlim M_j$, then $\overline{\bigcup_{i \geq i_0} \ker f^{i_0i}} = \varprojlim N_k$, with all the $M_j$, $N_k$ finite, then $$M \hat{\otimes}_R \overline{\bigcup_{i \geq i_0} \ker f^{i_0i}} = \varprojlim M_j \hat{\otimes}_R N_k,$$ and by the denseness of $\bigcup_{i \geq i_0} \ker f^{i_0i}$ in $\overline{\bigcup_{i \geq i_0} \ker f^{i_0i}}$, for each $k$ there is some $i$ such that $\ker f^{i_0i} \rightarrow N_k$ is surjective, so $M_j \hat{\otimes}_R \ker f^{i_0i} \rightarrow M_j \hat{\otimes}_R N_k$ is too. Denseness follows by \cite[Lemma 1.1.7]{R-Z}.
\end{proof}

Finally, we need one more result to apply this to the problem of getting information about group structure. Suppose $G \in PGrp$, let $H$ be a subgroup of $G$, and let $\{H_i\}$ be a direct system of (closed) subgroups of $H$, with inclusion maps between them, whose union $H'$ is dense in $H$ -- note that $H'$ is an (abstract) subgroup of $H$, because the system is direct. Thus we get a corresponding direct system $\{R \llbracket G/H_i \rrbracket\}$ of $R \llbracket G \rrbracket$ permutation modules whose maps come from quotients $G/H_i \rightarrow G/H_j$. Note that this system is Type L, because the maps $R \llbracket G/H_i \rrbracket \rightarrow R \llbracket G/H_j \rrbracket$ are all epimorphisms.

\begin{lem}
\label{prodirlim}
$\varinjlim_{PMod(R \llbracket G \rrbracket)} R \llbracket G/H_i \rrbracket = R \llbracket G/H \rrbracket$. Hence $\{R \llbracket G/H_i \rrbracket\}$ is Type L.
\end{lem}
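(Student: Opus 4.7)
The plan is to verify the universal property of the colimit directly, using the universal property of permutation modules (Lemma \ref{C_G}) to translate the data of compatible morphisms into a single fixed element, and then exploiting density together with the Hausdorff property to transfer an $H'$-fixed element to an $H$-fixed one.

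First, I would note that the canonical quotients $G/H_i \twoheadrightarrow G/H$ induce a compatible family of epimorphisms $R \llbracket G/H_i \rrbracket \to R \llbracket G/H \rrbracket$ in $PMod(R\llbracket G \rrbracket)$, and hence a canonical morphism $\Phi: \varinjlim R \llbracket G/H_i \rrbracket \to R \llbracket G/H \rrbracket$. To show $\Phi$ is an isomorphism I would verify that $R \llbracket G/H \rrbracket$ itself satisfies the universal property of the colimit. So fix $M \in PMod(R\llbracket G \rrbracket)$ together with a compatible family of morphisms $\phi_i: R \llbracket G/H_i \rrbracket \to M$. By Lemma \ref{C_G}, each $\phi_i$ corresponds uniquely to a continuous $G$-map $G/H_i \to M$, and such a map is determined by its value $m_i := \phi_i(1 H_i)$, which must be $H_i$-fixed.

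Next, the compatibility of the $\phi_i$ under the structure maps (which send $1 H_i \mapsto 1 H_j$ whenever $H_i \subseteq H_j$) forces $m_i = m_j$ for all $i,j$; call this common element $m \in M$. Then $m$ is fixed by every $H_i$, hence by the abstract subgroup $H' = \bigcup_i H_i$. Because the action $G \times M \to M$ is continuous and $M$ is Hausdorff, the stabilizer of $m$ is closed in $G$, so it contains $\overline{H'} = H$. Thus $m$ is $H$-fixed, and the unique continuous $G$-map $G/H \to M$ sending $1H \mapsto m$ extends, by the universal property of the permutation module $R \llbracket G/H \rrbracket$, to a unique continuous $R\llbracket G \rrbracket$-morphism $\psi: R \llbracket G/H \rrbracket \to M$. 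One checks that the composites $R \llbracket G/H_i \rrbracket \to R \llbracket G/H \rrbracket \xrightarrow{\psi} M$ recover the $\phi_i$ (both send $1H_i \mapsto m$), and uniqueness of $\psi$ is immediate from the universal property of $R\llbracket G/H \rrbracket$. This establishes the universal property and hence the desired isomorphism, and the ``Type L'' clause then follows from the observation made just before the statement (the transition maps are epimorphisms).

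The main obstacle I anticipate is the density step: one must be careful that ``$H'$ is dense in $H$'' together with ``$m$ is fixed by $H'$'' really does give ``$m$ is fixed by $H$.'' This is precisely where continuity of the $G$-action on a Hausdorff profinite module is used, via the fact that the stabilizer of a point is closed. Everything else is a routine chase through the universal properties of permutation modules and of colimits.
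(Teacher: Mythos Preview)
Your argument is correct. Both you and the paper use Lemma~\ref{C_G} to translate the problem into one about $G$-maps, and both rely on the density of $H'$ in $H$ together with the Hausdorff property to pass from $H'$-invariance to $H$-invariance. The difference is in execution: the paper first reduces to showing $\varinjlim_{G\text{-}Pro} G/H_i = G/H$, computes $\varinjlim_{G\text{-}Top} G/H_i = G/H'$ by an explicit injectivity check, and then invokes Corollary~\ref{pccomp} (profinite completion is left adjoint) to get $G/H$. You instead verify the universal property of $R\llbracket G/H \rrbracket$ directly at the module level, identifying $C_G(G/H_i,M)$ with $M^{H_i}$ and computing $\varprojlim_i M^{H_i} = M^H$ via the closed-stabilizer observation. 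Your route is a bit more elementary in that it sidesteps the Section~\ref{progrm} machinery on topological colimits and profinite completions; the paper's route has the minor advantage of isolating the statement $\varinjlim_{G\text{-}Pro} G/H_i = G/H$, which is of independent interest.
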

\begin{proof}
Recall that, for $X \in G \text{-} Pro$ and $M \in PMod(R \llbracket G \rrbracket)$, we let $C_G(X,M)$ be the $U(R)$-module of continuous $G$-maps $X \rightarrow M$. We have
\begin{align*}
\Hom_{R \llbracket G \rrbracket}(\varinjlim_{PMod(R \llbracket G \rrbracket)} R \llbracket G/H_i \rrbracket,M) &= \varprojlim_{Mod(U(R))} \Hom_{R \llbracket G \rrbracket}(R \llbracket G/H_i \rrbracket,M) \\
&= \varprojlim_{Mod(U(R))} C_G(G/H_i,M) \\
&= C_G(\varinjlim_{G \text{-} Pro} G/H_i,M) \\
&= \Hom_{R \llbracket G \rrbracket}(R \llbracket \varinjlim_{G \text{-} Pro} G/H_i \rrbracket,M):
\end{align*}
the first and third equalities hold by the universal property of colimits; the second and fourth hold by the universal property of permutation modules, Lemma \ref{C_G}. Since this holds for all $M$, we have $\varinjlim_{PMod(R \llbracket G \rrbracket)} R \llbracket G/H_i \rrbracket = R \llbracket \varinjlim_{G \text{-} Pro} G/H_i \rrbracket$, so we just need to show that $\varinjlim_{G \text{-} Pro} G/H_i = G/H$.

To see this, we will show first that $\varinjlim_{G \text{-} Top} G/H_i = G/H'$. Note that we have compatible epimorphisms $G/H_i \rightarrow G/H'$, and hence an epimorphism $f: \varinjlim_{G \text{-} Top} G/H_i \rightarrow G/H'$. Note also that the maps $G/H_i \rightarrow \varinjlim_{G \text{-} Top} G/H_i$ are surjective. Suppose $f(x)=f(y)$ for $x,y \in \varinjlim_{G \text{-} Top} G/H_i$. Take a representative $x'$ of $x$ in some $G/H_{i_1}$, and a representative $y'$ of $y$ in some $G/H_{i_2}$. Now the images of $x'$ and $y'$ are in the same left coset of $H'$ in $G$, i.e. $x'h_1 = y'h_2$ for some $h_1,h_2 \in H'$. Write $H_j$ for the subgroup of $H'$ generated by $h_1, h_2, x$ and $y$. Thus the images of $x'$ and $y'$ are in the same left coset of $H_j$ in $G$, i.e. $x'$ and $y'$ have the same image in $G/H_j$ and hence in $\varinjlim_{G \text{-} Top} G/H_i$, so $x=y$, and $f$ is injective. Finally, note that, in exactly the same way as the construction of Type L direct limits, $\varinjlim_{G \text{-} Top} G/H_i$ has the quotient topology coming from $G$, which is the same as the one on $G/H'$. Thus, by Corollary \ref{pccomp}, $\varinjlim_{G \text{-} Pro} G/H_i$ is the profinite completion of $G/H'$, which is just $G/\overline{H'} = G/H$ by the same argument as for Type L systems.
\end{proof}

By Lemma \ref{typeIres}, $\varinjlim_{PMod(R)} R \llbracket G/H_i \rrbracket = R \llbracket G/H \rrbracket$ as well; indeed, by the same lemma, any compatible collection of $G$-actions on these modules gives a direct limit whose underlying $R$-module is $R \llbracket G/H \rrbracket$, and whose $G$-action is just the one coming from any of the quotient maps $$f_i: R \llbracket G/H_i \rrbracket \rightarrow R \llbracket G/H \rrbracket.$$ So if $R \llbracket G/H;\sigma \rrbracket$ is a signed $R \llbracket G \rrbracket$ permutation module, define $R \llbracket G/H_i;\sigma_i \rrbracket$ for each $i$ to be a signed $R \llbracket G \rrbracket$ permutation module by the $G$-action $\sigma_i(g,x) = gx$ if $\sigma(g,f_i(x)) = gf_i(x)$ and $\sigma_i(g,x) = -gx$ if $\sigma(g,f_i(x)) = -gf_i(x)$, for all $g \in G, x \in G/H_i \cup -G/H_i$. Clearly these $G$-actions are all compatible, and they have as their direct limit (in $PMod(\Lambda)$) $R \llbracket G/H;\sigma \rrbracket$. In particular, we get the following result.

\begin{cor}
\label{prodirlim+}
Given a signed $R \llbracket G \rrbracket$ permutation module $R \llbracket G/H;\sigma \rrbracket$, and a direct system $\{H_i\}$ of subgroups of $H$ whose union is dense in $H$, there is a Type L system of signed permutation modules of the form $R \llbracket G/H_i;\sigma_i \rrbracket$ whose direct limit is $R \llbracket G/H;\sigma \rrbracket$.
\end{cor}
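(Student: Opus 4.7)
The strategy is to take the construction sketched in the paragraph immediately preceding the statement and verify that it genuinely produces a Type L system with the claimed direct limit; almost all of the substantive work has already been done by Lemma~\ref{prodirlim} and Lemma~\ref{typeIres}, and what remains is to bookkeep the signs carefully.

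First I would invoke Lemma~\ref{prodirlim} to obtain the unsigned Type L system $\{R \llbracket G/H_i \rrbracket\}$ with direct limit $R \llbracket G/H \rrbracket$ in $PMod(R \llbracket G \rrbracket)$. Write $f_i: G/H_i \cup -G/H_i \to G/H \cup -G/H$ for the natural quotient map of $G$-spaces (sign-preserving on each piece). For each $i$ I would define $\sigma_i$ on $G/H_i \cup -G/H_i$ by pulling back the sign of $\sigma$ through $f_i$: explicitly, $\sigma_i(g,x) = \varepsilon \cdot gx$ where $\varepsilon \in \{\pm 1\}$ is the unique sign satisfying $\sigma(g, f_i(x)) = \varepsilon \cdot g f_i(x)$.

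Next I would verify three routine items. (a) Each $\sigma_i$ is continuous (it is a composite of $\sigma$, $f_i$ and the group operation of $G$), satisfies the sign-equivariance $\sigma_i(g,-x) = -\sigma_i(g,x)$, and obeys the group-action axioms; all three are inherited one level up from $\sigma$. (b) The twist homomorphism of $R \llbracket G/H_i; \sigma_i \rrbracket$ is simply the restriction to $H_i$ of the twist homomorphism $\varepsilon: H \to \{\pm 1\}$ of $R \llbracket G/H; \sigma \rrbracket$, and so is still a continuous homomorphism; thus each $R \llbracket G/H_i; \sigma_i \rrbracket$ is a bona fide signed permutation module. (c) For $H_i \leq H_j$ the connecting map $R \llbracket G/H_i \rrbracket \to R \llbracket G/H_j \rrbracket$ intertwines $\sigma_i$ and $\sigma_j$ because both signs are read off from the same map into $R \llbracket G/H; \sigma \rrbracket$; the resulting direct system is Type L since the connecting maps, induced from surjections of $G$-spaces, are epimorphisms.

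Finally, to identify the direct limit, I would combine Lemma~\ref{prodirlim} with Lemma~\ref{typeIres}: the direct limit in $PMod(R \llbracket G \rrbracket)$ has underlying $R$-module $R \llbracket G/H \rrbracket$, and the limiting $G$-action is the unique one compatible with all the $\sigma_i$, which by construction coincides with $\sigma$. Hence the limit is $R \llbracket G/H; \sigma \rrbracket$. I do not anticipate any serious obstacle; the only point requiring mild care is packaging the signs uniformly through $f_i$, which is precisely what the definition of $\sigma_i$ above is designed to do.
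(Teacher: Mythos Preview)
Your proposal is correct and follows exactly the same approach as the paper: the paragraph preceding the corollary already contains the whole argument, defining $\sigma_i$ by pulling back the sign of $\sigma$ along $f_i$ and then invoking Lemma~\ref{prodirlim} together with Lemma~\ref{typeIres} to identify the limit. You have simply supplied the routine verifications (continuity, compatibility, action axioms) that the paper leaves implicit.
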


\section{Type H Systems}
\label{typeIIsys}

As before, let $R$ be a commutative profinite ring, $\Lambda$ a profinite $R$-algebra, and $M \in PMod(\Lambda)$ of type $\FP_\infty$. Suppose $A \in PMod(\Lambda)$ has the form $\varprojlim_{j \in J} A_j$, where each $A_j \in PMod(\Lambda)$ is finite. Suppose in addition that each $A_j$ is a direct sum $A_{j,1} \oplus \cdots \oplus A_{j,n_j}$ of $\Lambda$-modules such that, whenever $j_1 \geq j_2$, the morphism $\phi_{j_1j_2}: A_{j_1} \rightarrow A_{j_2}$ has the property that, for each $k$, $\phi_{j_1j_2}(A_{j_1,k})$ is contained in some $A_{j_2,k'}$. Then we say $A$ has the structure of a Type H system.

In the same notation, write $I_j$ for the set $\{1,\ldots,n_j\}$. Then the structure of the Type H system induces a map $\psi_{j_1j_2}: I_{j_1} \rightarrow I_{j_2}$ for each $j_1 \geq j_2$ in $J$, giving an inverse system $\{I_j:j \in J\}$: if $\phi_{j_1j_2}(A_{j_1,k}) \subseteq A_{j_2,k'}$, define $\psi_{j_1j_2}(k)=k'$. Write $I$ for the inverse limit and $\iota_j$ for the map $I \rightarrow I_j$. $I$ is clearly profinite, because it is the inverse limit of a system of finite sets; also $I$ is non-empty by \cite[Proposition 1.1.4]{R-Z}. Now pick $i \in I$. We call $A^i = \varprojlim_j A_{j,\iota_j(i)}$ the $i$th component of $A$.

\begin{prop}
\label{epitypeII}
Suppose $A \in PMod(\Lambda)$ has the structure of a Type H system. Suppose $\{A^i: i \in I\}$ are the components of $A$. Then for each $n$ we have an epimorphism $$\bigoplus_{PMod(R),i} \Ext_\Lambda^n(M,A^i) \rightarrow \Ext_\Lambda^n(M,A).$$
\end{prop}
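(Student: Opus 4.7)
The plan is to follow the template of Proposition \ref{epitypeI}, with coproducts in $PMod(R)$ taking the role that direct limits played there. Since $M$ is of type $\FP_\infty$, take a projective resolution $P_\ast$ of $M$ with each $P_n$ finitely generated; then $\Ext_\Lambda^n(M,-)$ is computed as $H^n \Hom_\Lambda(P_\ast,-)$.

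The first stage is to establish the Type H analogue of Lemma \ref{typeIres}: for each finitely generated projective $P$, the natural map
\[ \bigoplus_{PMod(R),i} \Hom_\Lambda(P,A^i) \to \Hom_\Lambda(P,A) \]
is an isomorphism. Since $P$ is finitely generated, $\Hom_\Lambda(P,-)$ commutes with the inverse limit $A = \varprojlim_j A_j$, giving $\Hom_\Lambda(P,A) = \varprojlim_j \bigoplus_{k \in I_j} \Hom_\Lambda(P,A_{j,k})$. This itself carries a Type H structure of finite modules in $PMod(R)$, whose components are precisely $\Hom_\Lambda(P,A^i)$. One checks the isomorphism on finite quotients: each factors through $\Hom_\Lambda(P,A_j) = \bigoplus_{k \in I_j} \Hom_\Lambda(P,A_{j,k})$ for some $j$, and surjectivity onto each summand $\Hom_\Lambda(P,A_{j,k})$ is obtained by picking $i \in I$ with $\iota_j(i) = k$ and lifting from $\Hom_\Lambda(P,A^i)$, using projectivity of $P$ and the finiteness of the inverse system.

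With this Key Lemma in hand, the first-stage diagram chase of Proposition \ref{epitypeI} yields $H^n(\bigoplus_i \Hom_\Lambda(P_\ast,A^i)) \cong \Ext_\Lambda^n(M,A)$. For the second stage, the right-exactness of $\bigoplus_{PMod(R)}$ noted after Corollary \ref{pccomp}, combined with the Five Lemma exactly as in Proposition \ref{epitypeI}, produces an epimorphism $\bigoplus_i \Ext_\Lambda^n(M,A^i) \to H^n(\bigoplus_i \Hom_\Lambda(P_\ast,A^i))$. Composing yields the stated epimorphism.

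The main obstacle is the Key Lemma in the first stage: identifying the coproduct in $PMod(R)$ of the components $\Hom_\Lambda(P,A^i)$ with $\Hom_\Lambda(P,A)$. Because the coproduct is the profinite completion of the topological direct sum, the compatibility with the inverse limit structure is delicate, and one must carefully trace an arbitrary element through the finite-level decompositions $\bigoplus_{k \in I_j} \Hom_\Lambda(P,A_{j,k})$ back through the coproduct of components, exploiting the Type H hypothesis that each transition $A_{j_1} \to A_{j_2}$ respects the summand structure.
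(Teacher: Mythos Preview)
Your Key Lemma is false, and this breaks the first stage of the argument. Take $\Lambda = R = \mathbb{F}_p$, $P = \mathbb{F}_p$, and $A = \mathbb{F}_p \llbracket \mathbb{Z}_p \rrbracket$ with the Type H structure coming from $A_j = \mathbb{F}_p[\mathbb{Z}/p^j\mathbb{Z}]$, each $A_{j,k}$ being the span of a single basis element. Then $I_j = \mathbb{Z}/p^j\mathbb{Z}$, $I = \mathbb{Z}_p$, and every component $A^i$ is a copy of $\mathbb{F}_p$. Your lemma would give $\bigoplus_{PMod(\mathbb{F}_p),\,i \in \mathbb{Z}_p} \mathbb{F}_p \cong \mathbb{F}_p \llbracket \mathbb{Z}_p \rrbracket$, but applying $\Hom_{\mathbb{F}_p}(-,\mathbb{F}_p)$ gives, on the left, all set-maps $\mathbb{Z}_p \to \mathbb{F}_p$ (by the universal property of the coproduct) and, on the right, only the continuous ones (by Lemma~\ref{C_G}). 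The argument you sketch (checking on finite quotients) establishes only that the natural map $\bigoplus_i \Hom_\Lambda(P,A^i) \to \Hom_\Lambda(P,A)$ is an epimorphism; it says nothing about the kernel, and in fact the kernel is enormous here. With only an epimorphism at the $\Hom$ level, the identification $H^n(\bigoplus_i \Hom_\Lambda(P_\ast,A^i)) \cong \Ext_\Lambda^n(M,A)$ collapses, and the second stage has nothing to feed into.

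The paper does not attempt to compare $\bigoplus_i \Hom_\Lambda(P,A^i)$ with $\Hom_\Lambda(P,A)$ at all. Instead it works directly at the level of $\Ext$ and exploits finiteness. Since $M$ is of type $\FP_\infty$ and each $A_j$ is finite, each $\Ext_\Lambda^n(M,A_j)$ is finite, and $\Ext_\Lambda^n(M,A) = \varprojlim_j \Ext_\Lambda^n(M,A_j)$. Writing $C_j$ for the image of $\Ext_\Lambda^n(M,A)$ in $\Ext_\Lambda^n(M,A_j)$, it suffices to show $\bigoplus_i \Ext_\Lambda^n(M,A^i)$ surjects onto each $C_j$. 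This is done by a compactness argument: if the image in $C_j$ were a proper submodule $C'_j$, one builds (using nonemptiness of inverse limits of nonempty finite sets) an index $i \in I$ and an element of $C_j \setminus C'_j$ lying in the image of $\Ext_\Lambda^n(M,A^i)$, a contradiction. No analogue of Lemma~\ref{typeIres} is needed or available.
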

\begin{proof}
Many aspects of the Type H structure carry over to $\Ext_\Lambda^n(M,A)$. $$\Ext_\Lambda^n(M,A) = \varprojlim_{PMod(R),j} \Ext_\Lambda^n(M,A_j)$$ by \cite[Theorem 3.7.2]{S-W}, and similarly
\begin{equation*}
\Ext_\Lambda^n(M,A^i) = \varprojlim_{PMod(R),j} \Ext_\Lambda^n(M,A_{j,\iota_j(i)}) \tag{$\ast$}
\end{equation*}
for each $i \in I$. By additivity, $$\Ext_\Lambda^n(M,A_j) = \Ext_\Lambda^n(M,A_{j,1}) \oplus \cdots \oplus \Ext_\Lambda^n(M,A_{j,n_j}).$$ Note that each $\Ext_\Lambda^n(M,A_j)$ and $\Ext_\Lambda^n(M,A_{j,k})$ is finite, because $M$ is of type $\FP_\infty$, and there are only finitely many homomorphisms from a finitely generated $\Lambda$-module to a finite one.

Write $C_j$ for the image of $$g_j: \Ext_\Lambda^n(M,A) \rightarrow \Ext_\Lambda^n(M,A_j):$$ then by \cite[Corollary 1.1.8(a)]{R-Z} we know $\Ext_\Lambda^n(M,A) = \varprojlim_{PMod(R),j} C_j$. We claim that $$f_j: \bigoplus_{PMod(R),i} \Ext_\Lambda^n(M,A^i) \rightarrow C_j$$ is an epimorphism for each $j$, and then the proposition will follow by \cite[Corollary 1.1.6]{R-Z}. To see this claim, fix some $j$, and suppose that the image of $f_j$ is some submodule $C'_j \neq C_j$. We will obtain a contradiction by showing that the image of $f_j$ is strictly larger than $C'_j$.

Now define, for $j' \geq j$, $I'_{j'} \subseteq I_{j'}$ to be those elements $k$ of $I_{j'}$ for which the image of $\Ext_\Lambda^n(M,A_{j',k})$ in $\Ext_\Lambda^n(M,A_j)$ is not contained in $C'_j$. For each $j' \geq j$ the map $g_j$ factors as $$\Ext_\Lambda^n(M,A) \xrightarrow{g_{j'}} \Ext_\Lambda^n(M,A_{j'}) \xrightarrow{g_{j'j}} \Ext_\Lambda^n(M,A_j),$$ so $\im(g_{j'j}) \supseteq \im(g_{j'}) = C_j$; hence $I'_{j'} \neq \varnothing$, and so $I' = \varprojlim_{j' \geq j} I'_{j'} \neq \varnothing$ by \cite[Proposition 1.1.4]{R-Z}.

Pick $i \in I'$. By definition of $I'$, $\Ext_\Lambda^n(M,A_{j,\iota_j(i)})$ is not contained in $C'_j$, so $\Ext_\Lambda^n(M,A_{j,\iota_j(i)}) \setminus C'_j \neq \varnothing$. Suppose that, for each $x \in \Ext_\Lambda^n(M,A_{j,\iota_j(i)}) \setminus C'_j$, there is some $j_x \geq j$ such that $$x \notin \im(f_{j_x,\iota_{j_x}(i)}: \Ext_\Lambda^n(M,A_{j_x,\iota_{j_x}(i)}) \rightarrow \Ext_\Lambda^n(M,A_j)).$$ Since $J$ is directed, there is some $j_0 \in J$ such that $j_0 \geq j_x$ for all $x$ in the finite set $\Ext_\Lambda^n(M,A_{j,\iota_j(i)}) \setminus C'_j$. For each such $x$, $\im(f_{j_0,\iota_{j_0}(i)}) \subseteq \im(f_{j_x,\iota_{j_x}(i)})$ and hence $x \notin \im(f_{j_0,\iota_{j_0}(i)}),$ so that $\im(f_{j_0,\iota_{j_0}(i)}) \subseteq C'_j$. But we chose $\iota_{j_0}(i)$ to be in $I'_{j_0}$, so $\im(f_{j_0,\iota_{j_0}(i)}) \nsubseteq C'_j$, contradicting our supposition. Therefore there must be some $x \in \Ext_\Lambda^n(M,A_{j,\iota_j(i)}) \setminus C'_j$ such that, for every $j' \geq j$, $x \in \im(f_{j',\iota_{j'}}).$

Write $f^i_j$ for the map $\Ext_\Lambda^n(M,A^i) \rightarrow \Ext_\Lambda^n(M,A_j)$, so that by ($\ast$) we have $f^i_j = \varprojlim_{j'} f_{j',\iota_{j'}}$. For every $j' \geq j$ we have $f_{j',\iota_{j'}}^{-1}(x) \neq \varnothing$, and hence, taking inverse limits over $j'$, we get $(f^i_j)^{-1}(x) \neq \varnothing$ by \cite[Proposition 1.1.4]{R-Z}, so that $x \in \im(f^i_j) \setminus C'_j$. Finally, it is clear from the definitions that $\im(f_j) \supseteq \im(f^i_j)$, so $x \in \im(f_j) \setminus C'_j$, proving our claim and giving the result.
\end{proof}

As in the last section, we want to be able to make new Type H systems from old ones.

\begin{lem}
\label{typeII+}
Suppose $G \in PGrp$. Suppose $M,A \in PMod(R \llbracket G \rrbracket)$, $M = \varprojlim_k M_k$, and let $A = \varprojlim_j A_{j,1} \oplus \cdots \oplus A_{j,n_j}$ have the structure of a Type H system in $PMod(\Lambda)$. Suppose $\{A^i: i \in I\}$ are the components of $A$. Then $M \hat{\otimes}_R A \in PMod(R \llbracket G \rrbracket)$, with the diagonal $G$-action, has the structure of a Type H system given by $\varprojlim_{j,k} (M_k \hat{\otimes}_R A_{j,1}) \oplus \cdots \oplus (M_k \hat{\otimes}_R A_{j,n_j})$ with components $\{M \hat{\otimes}_R A^i\}$.
\end{lem}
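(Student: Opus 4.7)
The plan is to verify three things in sequence: first, that $M \hat{\otimes}_R A$ really is the inverse limit $\varprojlim_{j,k} \bigoplus_\ell (M_k \hat{\otimes}_R A_{j,\ell})$ as a profinite $R \llbracket G \rrbracket$-module with diagonal action; second, that this presentation satisfies the definition of a Type H system (finiteness of each summand plus compatibility of the transition maps with the direct sum decomposition); and third, that the resulting components are exactly the $M \hat{\otimes}_R A^i$.

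For the first step, I would use that the completed tensor product over $R$ commutes with inverse limits of finite profinite modules -- explicitly, writing $M = \varprojlim_k M_k$ and $A_j = A_{j,1} \oplus \cdots \oplus A_{j,n_j}$, we have $M \hat{\otimes}_R A = \varprojlim_{j,k} (M_k \hat{\otimes}_R A_j)$, and since $\hat{\otimes}_R$ distributes over finite direct sums this equals $\varprojlim_{j,k} \bigoplus_\ell (M_k \hat{\otimes}_R A_{j,\ell})$. The $G$-action on each factor of the product is the diagonal one, and the identification is $R \llbracket G \rrbracket$-equivariant. Each summand $M_k \hat{\otimes}_R A_{j,\ell}$ is finite since $M_k$ and $A_{j,\ell}$ are finite.

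For the second step, I would reindex the inverse system by the directed set $J \times K$ and observe that the index set of summands at level $(j,k)$ is $I_j = \{1,\ldots,n_j\}$, independent of $k$. Given $(j_1,k_1) \geq (j_2,k_2)$, the transition map is $\phi_{j_1j_2} \hat{\otimes}_R (M_{k_1} \to M_{k_2})$ applied term by term; compatibility of $\phi_{j_1j_2}$ with the decomposition of $A$ therefore transfers immediately: $(M_{k_1} \hat{\otimes}_R A_{j_1,\ell})$ lands inside $(M_{k_2} \hat{\otimes}_R A_{j_2,\psi_{j_1j_2}(\ell)})$. Hence the system is Type H, and its combinatorial index set is $\varprojlim_{j,k} I_j = \varprojlim_j I_j = I$.

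For the third step, for $i \in I$ the $i$-th component is by definition $\varprojlim_{j,k} (M_k \hat{\otimes}_R A_{j,\iota_j(i)})$, which by the same interchange-of-limits argument equals $M \hat{\otimes}_R A^i$, again with the diagonal $G$-action. The only real obstacle is making step one rigorous: justifying that $\hat{\otimes}_R$ commutes with the relevant inverse limits, which I would handle by appealing to the construction of $\hat{\otimes}_R$ as an inverse limit of tensor products of finite quotients (as in \cite[Proposition 5.5.3]{R-Z}), together with the fact that a cofinal system of finite quotients of $M \hat{\otimes}_R A$ is given by $M_k \hat{\otimes}_R A_j$. Everything else is bookkeeping of the kind already carried out in Lemma \ref{typeI+}.
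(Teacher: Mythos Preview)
Your proposal is correct and follows exactly the approach the paper uses: the paper's proof is the single sentence ``This is immediate, since $\hat{\otimes}_R$ commutes with $\varprojlim$ and finite direct sums commute with both,'' and your three steps are simply a careful unpacking of that assertion. The only difference is that you spell out the bookkeeping (the reindexing by $J \times K$, the identification of the index set as $I$, and the computation of the components) that the paper leaves implicit.
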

\begin{proof}
This is immediate, since $\hat{\otimes}_R$ commutes with $\varprojlim$ and finite direct sums commute with both.
\end{proof}

Again, this section finishes with a couple of lemmas allowing us to get information about group structure.

\begin{lem}
\label{1orbit}
Suppose $G \in PGrp$, and suppose $\{X_j\}$ is an inverse system in $G \text{-} Pro$ with $X = \varprojlim_j X_j$. If each $X_j$ has a single $G$-orbit, so does $X$.
\end{lem}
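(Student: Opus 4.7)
The plan is to combine a basepoint argument with the compactness of $G$. First, note that $X = \varprojlim_j X_j$ is non-empty, being an inverse limit of non-empty compact Hausdorff spaces (each $X_j$ is non-empty as it has a $G$-orbit); pick once and for all some $x \in X$, and write $x_j$ for its image in $X_j$. By hypothesis $X_j = G \cdot x_j$, and by the structure of the inverse system the bonding maps $X_{j_1} \to X_{j_2}$ (for $j_1 \geq j_2$) send $x_{j_1}$ to $x_{j_2}$. This gives us a coherent system of basepoints to compare with an arbitrary element.

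Now fix an arbitrary $y \in X$ with images $y_j \in X_j$. Since $X_j$ is a single $G$-orbit, the set
\[
C_j = \{g \in G : g \cdot x_j = y_j\}
\]
is non-empty, and it is closed in $G$ because the $G$-action on $X_j$ is continuous and $\{y_j\}$ is closed in the Hausdorff space $X_j$. Moreover, if $j_1 \geq j_2$ and $g \in C_{j_1}$, applying the bonding map $X_{j_1} \to X_{j_2}$ to $g \cdot x_{j_1} = y_{j_1}$ yields $g \cdot x_{j_2} = y_{j_2}$, so $C_{j_1} \subseteq C_{j_2}$. Hence $\{C_j\}_{j \in J}$ is a filtered family of non-empty closed subsets of the compact space $G$, and so has the finite intersection property.

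By compactness of $G$, we conclude $\bigcap_j C_j \neq \varnothing$; pick any $g$ in this intersection. Then $g \cdot x_j = y_j$ for every $j$, which, since the $G$-action on $X$ is computed componentwise from the inverse limit description, means precisely that $g \cdot x = y$ in $X$. As $y \in X$ was arbitrary, $X = G \cdot x$, i.e. $X$ consists of a single $G$-orbit.

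The key (minor) obstacle is simply the verification that the sets $C_j$ form a \emph{coherent} (nested) filtered family, which is what allows the compactness of $G$ to produce a simultaneous witness $g$; everything else is bookkeeping. I would cite \cite[Proposition 1.1.4]{R-Z} if needed for the non-emptiness of inverse limits of non-empty compact Hausdorff spaces, to keep the proof uniform with the rest of the paper.
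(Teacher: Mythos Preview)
Your proof is correct, but it takes a different route from the paper's. The paper picks $x \in X$, observes that the orbit $G \cdot x$ is compact (as the continuous image of $G$) and hence closed in $X$, notes that it surjects onto each $X_j$, and then invokes \cite[Corollary 1.1.8(a)]{R-Z} to conclude $G \cdot x = \varprojlim_j X_j = X$ in one stroke. You instead fix an arbitrary target $y$ and run a direct compactness argument in $G$: the sets $C_j = \{g : g \cdot x_j = y_j\}$ are nonempty, closed, and filtered, so their intersection is nonempty by compactness of $G$, producing a simultaneous $g$ with $g \cdot x = y$. Your argument is slightly longer but entirely self-contained and makes the role of compactness of $G$ explicit; the paper's is shorter but relies on the cited inverse-limit lemma (and implicitly on the closedness of $G \cdot x$, which it does not spell out). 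Either is perfectly acceptable here.
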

\begin{proof}
Write $\phi_j$ for the map $X \rightarrow X_j$. For $Y \subseteq X$, define $G \cdot Y = \{gy: g \in G, y \in Y\}$. Pick $x \in X$. Then $\phi_j(G \cdot \{x\}) = G \cdot \phi_j(\{x\}) = X_j$ for each $j$, because each $X_j$ has a single $G$-orbit, so $G \cdot \{x\} = \varprojlim_j X_j = X$ by \cite[Corollary 1.1.8(a)]{R-Z}. Hence the orbit of $x$ is the whole of $X$.
\end{proof}

\begin{lem}
\label{typeIIsperm}
Suppose $G \in PGrp$, and suppose $R \llbracket X \rrbracket \in PMod(R \llbracket G \rrbracket)$ is a signed permutation module. Then $R \llbracket X \rrbracket$ has the structure of a Type H system whose components are signed permutation modules $R \llbracket X^i \rrbracket$, where the $X^i$ are the $G$-orbits of $R \llbracket X \rrbracket$.
\end{lem}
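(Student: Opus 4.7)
The plan is to refine the inverse-limit description of $R \llbracket X \rrbracket$ provided by Lemma \ref{algstruct} by splitting each finite piece into its $G$-orbit summands, and then to show that the components of the resulting Type H system are exactly the signed permutation modules on the $G$-orbits of $X$. The case $\chac R = 2$ is immediate, since there signed permutation modules coincide with ordinary permutation modules and the desired decomposition is the usual one along orbits. So assume $\chac R \neq 2$.

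First I would fix notation using Lemma \ref{algstruct} to write $R \llbracket X \rrbracket = \varprojlim_{(i,j)} R_j[X_i]$ with $X \cup -X = \varprojlim (X_i \cup -X_i)$ a system of sign-preserving finite $G$-quotients. For each $(i,j)$ let $O_{(i,j)}$ be the finite set of $G$-orbits of $\overline{X_i}$, and for $o \in O_{(i,j)}$ let $Z_{(i,j),o} \subseteq X_i$ be its preimage under $X_i \to \overline{X_i}$. Then $Z_{(i,j),o} \cup -Z_{(i,j),o}$ is a $G$-stable subset of $X_i \cup -X_i$ (the involution $x \mapsto -x$ commutes with the $G$-action and hence preserves the $\overline{X_i}$-orbit decomposition), so each $R_j[Z_{(i,j),o}]$ is a signed permutation $R_j \llbracket G \rrbracket$-submodule and
\[
R_j[X_i] = \bigoplus_{o \in O_{(i,j)}} R_j[Z_{(i,j),o}].
\]

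Next I would check that the transition maps in the inverse system respect this decomposition. A map $R_{j_1}[X_{i_1}] \to R_{j_2}[X_{i_2}]$ comes from a $G$-equivariant, sign-preserving quotient $X_{i_1} \cup -X_{i_1} \to X_{i_2} \cup -X_{i_2}$, which induces a map $\overline{X_{i_1}} \to \overline{X_{i_2}}$ sending each $G$-orbit into a single $G$-orbit. This yields maps $\psi: O_{(i_1,j_1)} \to O_{(i_2,j_2)}$ with $R_{j_1}[Z_{(i_1,j_1),o}]$ landing inside $R_{j_2}[Z_{(i_2,j_2),\psi(o)}]$, verifying the definition of a Type H structure with the indexing sets $I_{(i,j)} = O_{(i,j)}$. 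The index set of components is $I = \varprojlim_{(i,j)} O_{(i,j)}$, and by Lemma \ref{1orbit} each $i \in I$ gives $\varprojlim_{(i_0,j_0)} \iota_{(i_0,j_0)}(i)$, a single $G$-orbit of $\overline{X}$; conversely, every $G$-orbit of $\overline{X}$ projects compatibly to one orbit in each $\overline{X_i}$, giving an element of $I$. Writing $X^i \subseteq X$ for the preimage of the orbit corresponding to $i$, one has $X^i = \varprojlim_{(i_0,j_0)} Z_{(i_0,j_0),\iota_{(i_0,j_0)}(i)}$, so the $i$th component of the Type H system is
\[
\varprojlim_{(i_0,j_0)} R_{j_0}[Z_{(i_0,j_0),\iota_{(i_0,j_0)}(i)}] = R \llbracket X^i \rrbracket,
\]
again a signed permutation module (its sign structure being inherited from that of $R \llbracket X \rrbracket$).

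The main obstacle is purely bookkeeping: keeping track of the sign involution through the decomposition and the passage to inverse limits, and verifying that the components really are signed permutation modules in their own right rather than merely subspaces of one. Everything hinges on the fact that the quotient maps produced by Lemma \ref{algstruct} are sign-preserving, so the orbit decomposition of $\overline{X_i}$ lifts to a $G$-stable sign-symmetric partition of $X_i \cup -X_i$, and the identification of $I$ with the set of $G$-orbits of $\overline{X}$ via Lemma \ref{1orbit} ties the component indices to the orbits $X^i$ claimed in the statement.
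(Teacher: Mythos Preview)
Your proposal is correct and follows essentially the same approach as the paper: use Lemma~\ref{algstruct} to write $R\llbracket X\rrbracket$ as an inverse limit of finite signed permutation modules $R_l[X_j]$, decompose each of these along the $G$-orbits of $\overline{X_j}$, check that the transition maps send each orbit summand into a single orbit summand, and then identify the index set $I$ with the set of $G$-orbits of $\overline{X}$ via Lemma~\ref{1orbit}. The only cosmetic difference is that you separate out the case $\chac R = 2$ at the start, whereas the paper's argument handles both cases uniformly; either way the content is the same.
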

\begin{proof}
By Lemma \ref{algstruct} we can write $X \cup -X = \varprojlim X_j \cup -X_j$, where the $X_j \cup -X_j$ are finite quotients of $X \cup -X$ preserving the algebraic structure. If $R = \varprojlim_l R_l$, $R \llbracket X \rrbracket = \varprojlim_{j,l} R_l[X_j]$, and each $R_l[X_j]$ is a signed $R_l \llbracket G \rrbracket$ permutation module. Now as a $G$-space $\overline{X_j} = (X_j \cup -X_j)/\sim$ is the disjoint union of its orbits $\overline{X_{j,1}}, \ldots, \overline{X_{j,n_j}}$, where $\sim$ is the relation $x \sim -x$, so $X_j \cup -X_j$ is the disjoint union of $G$-spaces $X_{j,1} \cup -X_{j,1}, \ldots, X_{j,n_j} \cup -X_{j,n_j}$. Therefore we get $$R_l[X_j] = R_l[X_{j,1}] \oplus \cdots \oplus R_l[X_{j,n_j}].$$ For $l_1 \geq l_2$ and $j_1 \geq j_2$, write $\phi_{(l_1,j_1)(l_2,j_2)}$ for the map $R_{l_1}[X_{j_1}] \rightarrow R_{l_2}[X_{j_2}]$. For each orbit $\overline{X_{j_1,k_1}}$, $$\phi_{(l_1,j_1)(l_2,j_2)}(X_{j_1,k_1} \cup -X_{j_1,k_1}) = X_{j_2,k_2} \cup -X_{j_2,k_2},$$ where $\overline{X_{j_2,k_2}}$ is the image of $\overline{X_{j_1,k_1}}$ in $\overline{X_{j_2}}$ (since $\overline{X_{j_1,k_1}}$ has only one orbit, so any $G$-map image of it has one orbit too). Therefore $$\phi_{(l_1,j_1)(l_2,j_2)}(R_{l_1}[X_{j_1,k_1}]) = R_{l_2}[X_{j_2,k_2}],$$ and hence we have the structure of a Type H system.

Now write $I_j = \{1,\ldots,n_j\}$, define the maps $I_{j'} \rightarrow I_j$ for $j' \geq j$ coming from the Type H structure, and let $I = \varprojlim_j I_j$, $\iota_j: I \rightarrow I_j$. We give a bijection between $I$ and the set of orbits of $R \llbracket X \rrbracket$. Given a $G$-orbit $X'$ of $R \llbracket X \rrbracket$, any $G$-map image of it has one orbit too, so for each $j,l$ the image of $R \llbracket X' \rrbracket \rightarrow R \llbracket X \rrbracket \rightarrow R_l[X_j]$ must be contained in some $R_l[X_{j,i_j}]$ with one orbit. Define the element $i \in I$ to be the inverse limit over $j$ of $i_j$ and define the map $b: \{\text{orbits of } R \llbracket X \rrbracket\} \rightarrow I$ by $b(X')=i$.

Conversely, given $i \in I$, each $\overline{X_{j,\iota_j(i)}}$ has a single $G$-orbit, so $\overline{X^i} = \varprojlim_j \overline{X_{j,\iota_j(i)}}$ does too, by Lemma \ref{1orbit}. It is easy to see the map $i \mapsto X^i$ is inverse to $b$, giving the result.
\end{proof}

\section{The Main Result}
\label{main}

We can now use these results to get information about groups of type $\FP_\infty$ in $\widehat{\cll'\clh_R}\mathfrak{F}$. As usual, $R$ is a commutative profinite ring. Given abelian categories $C,D$, define a $(-\infty,\infty)$ cohomological functor from $C$ to $D$ to be a sequence of additive functors $T^i: C \rightarrow D$, $i \in \mathbb{Z}$, with natural connecting homomorphisms such that for every short exact sequence $0 \rightarrow L \rightarrow M \rightarrow N \rightarrow 0$ in $C$ we get a long exact sequence $$\cdots \rightarrow T^{n-1}(N) \rightarrow T^n(L) \rightarrow T^n(M) \rightarrow T^n(N) \rightarrow \cdots.$$

We start by giving a (slight) generalisation of \cite[3.1]{Krop}, which holds for all $(-\infty,\infty)$ cohomological functors. The proof is a dimension-shifting argument which goes through entirely unchanged.

\begin{lem}
\label{lesneq0}
Let $T^\ast$ be a $(-\infty,\infty)$ cohomological functor from $C$ to $D$. Let $$0 \rightarrow M_r \rightarrow M_{r-1} \rightarrow \cdots \rightarrow M_0 \rightarrow L \rightarrow 0$$ be an exact sequence in $D$. If $T^i(L) \neq 0$ for some $i$ then $T^{i+j}(M_j) \neq 0$ for some $0 \leq j \leq r$.
\end{lem}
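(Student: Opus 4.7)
The plan is a standard dimension-shifting argument using only the long exact sequences attached to the cohomological functor $T^\ast$. First I would chop the given resolution into short exact sequences. Set $K_{-1} = L$, let $K_j = \ker(M_j \to M_{j-1})$ for $0 \le j \le r-2$, and note that exactness at $M_{r-1}$, combined with the injectivity of $M_r \to M_{r-1}$, identifies $K_{r-1} := \ker(M_{r-1} \to M_{r-2})$ with $M_r$. This yields, for each $0 \le j \le r-1$, a short exact sequence
\[
0 \to K_j \to M_j \to K_{j-1} \to 0.
\]

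Next I would induct on $j$, proving the statement: either $T^{i+k}(M_k) \ne 0$ for some $0 \le k \le j$, or $T^{i+j+1}(K_j) \ne 0$. For the base case $j=0$, applying $T^\ast$ to $0 \to K_0 \to M_0 \to L \to 0$ yields a long exact sequence
\[
\cdots \to T^i(M_0) \to T^i(L) \to T^{i+1}(K_0) \to \cdots.
\]
Since $T^i(L) \ne 0$ by hypothesis, either $T^i(M_0) \ne 0$ (and $k=0$ works) or a nonzero class in $T^i(L)$ maps to a nonzero class in $T^{i+1}(K_0)$, whence $T^{i+1}(K_0) \ne 0$. The inductive step is formally identical, with $L$ replaced by $K_{j-1}$ and the sequence $0 \to K_j \to M_j \to K_{j-1} \to 0$ used in place of the $j=0$ one.

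Finally, running the induction up to $j = r-1$ and invoking the identification $K_{r-1} = M_r$ closes the argument: either $T^{i+k}(M_k) \ne 0$ for some $k < r$, or $T^{i+r}(K_{r-1}) = T^{i+r}(M_r) \ne 0$, so in every case the conclusion holds for some $0 \le j \le r$. There is no real obstacle here; the only bookkeeping to watch is the grading convention and the boundary identification $K_{r-1} \cong M_r$ coming from exactness of the leftmost arrow.
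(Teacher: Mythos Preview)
Your argument is correct and is precisely the dimension-shifting argument the paper invokes (it cites \cite[3.1]{Krop} and does not reproduce the proof). The only cosmetic point is that your formula $K_j = \ker(M_j \to M_{j-1})$ at $j=0$ tacitly uses $M_{-1} = L$; making that convention explicit, or writing $K_0 = \ker(M_0 \to L)$, removes any ambiguity.
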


Define, for all $n \geq 0$, $$H_R^n(G,-) = \Ext_{R \llbracket G \rrbracket}^n(R,-).$$ Define $H_R^n(G,-)=0$ for $n<0$. The functors $H_R^\ast(G,-)$ thus defined form a $(-\infty,\infty)$ cohomological functor from $PMod(R \llbracket G \rrbracket)$ to $Mod(U(R))$.

The following theorem corresponds roughly to \cite[3.2]{Krop}.

\begin{thm}
\label{neq0h}
Suppose $G \in \widehat{\cll'\clh_R}\mathfrak{X}$ is of type $\FP_\infty$. Then there is some subgroup $H \leq G$ which is in $\mathfrak{X}$, some signed $R \llbracket G \rrbracket$ permutation module $R \llbracket G/H;\sigma \rrbracket$ and some $n$ such that $H_R^n(G,R \llbracket G/H;\sigma \rrbracket) \neq 0$.
\end{thm}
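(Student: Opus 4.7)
The plan is to prove by transfinite induction on $\alpha$ a slightly stronger auxiliary statement: if $H\le G$ is any closed subgroup lying in $(\cll'\clh_R)_\alpha\mathfrak{X}$ and $R\llbracket G/H;\sigma\rrbracket$ is a signed $R\llbracket G\rrbracket$ permutation module with $H^n_R(G,R\llbracket G/H;\sigma\rrbracket)\ne 0$ for some $n$, then there exist $H'\in\mathfrak{X}$, a signed permutation module $R\llbracket G/H';\sigma'\rrbracket$ and $m$ with $H^m_R(G,R\llbracket G/H';\sigma'\rrbracket)\ne 0$. The theorem is the case $H=G$, $\sigma$ trivial, $n=0$, since $H^0_R(G,R)=R$. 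We may assume $\mathfrak{X}$ is subgroup-closed (replacing it by $\cls\mathfrak{X}$ if necessary), so that every $(\cll'\clh_R)_\beta\mathfrak{X}$ is too. The case $\alpha=0$ is immediate and the limit case is trivial, so suppose $\alpha=\beta+1$.

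First I would exploit $\cll'$: write $H$ as the closure of a directed union of subgroups $H_i\in\clh_R((\cll'\clh_R)_\beta\mathfrak{X})$. By Corollary \ref{prodirlim+}, $R\llbracket G/H;\sigma\rrbracket$ is the direct limit of a Type L system of signed permutation modules $R\llbracket G/H_i;\sigma_i\rrbracket$. Since $G$ is of type $\FP_\infty$ over $R$, Proposition \ref{epitypeI} provides an epimorphism
\begin{equation*}
\varinjlim_i H^n_R(G,R\llbracket G/H_i;\sigma_i\rrbracket)\twoheadrightarrow H^n_R(G,R\llbracket G/H;\sigma\rrbracket)\ne 0,
\end{equation*}
so $H^n_R(G,R\llbracket G/H_i;\sigma_i\rrbracket)\ne 0$ for some $i$. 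Fix such an $i$.

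Next I would exploit $\clh_R$: take a finite length signed permutation resolution $0\to Q_r\to\cdots\to Q_0\to R\to 0$ of $R$ over $R\llbracket H_i\rrbracket$ with stabilisers in $(\cll'\clh_R)_\beta\mathfrak{X}$. Letting $R'$ be the twisted $R\llbracket H_i\rrbracket$-module associated with the twist homomorphism of $\sigma_i$ as in Lemma \ref{twist}, I would tensor the resolution with $R'$ over $R$ (exact, since each $Q_j$ is $R$-free) and then induce to $G$, producing
\begin{equation*}
0\to\Ind^G_{H_i}(Q_r\otimes_R R')\to\cdots\to\Ind^G_{H_i}(Q_0\otimes_R R')\to R\llbracket G/H_i;\sigma_i\rrbracket\to 0.
\end{equation*}
By Lemmas \ref{indsperm} and \ref{twist} each term is a signed $R\llbracket G\rrbracket$ permutation module, and its $G$-stabilisers are (conjugates of subgroups of) the $H_i$-stabilisers of the $Q_j$, hence still in $(\cll'\clh_R)_\beta\mathfrak{X}$. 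Lemma \ref{lesneq0} applied to this resolution yields some $j$ with $H^{n+j}_R(G,\Ind^G_{H_i}(Q_j\otimes_R R'))\ne 0$; Lemma \ref{typeIIsperm} equips this module with a Type H decomposition whose components are single-orbit signed permutation modules $R\llbracket G/K;\sigma'\rrbracket$ with $K\in(\cll'\clh_R)_\beta\mathfrak{X}$; and Proposition \ref{epitypeII} then forces $H^{n+j}_R(G,R\llbracket G/K;\sigma'\rrbracket)\ne 0$ for some $K$. The inductive hypothesis applied to $K$ completes the step.

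The hard part will be bookkeeping rather than any single ingredient: one has to interleave the Type L reduction (handling $\cll'$) with the Type H reduction (handling $\clh_R$) while ensuring that the signed structure from Section \ref{spermmod} is preserved at every stage. In particular, the tensor twist by $R'$, forced by the twist homomorphism $\varepsilon$ appearing in Lemma \ref{twist}, is precisely the step where one needs the induction and single-orbit properties of signed permutation modules (Lemmas \ref{indsperm} and \ref{typeIIsperm}) rather than their unsigned counterparts.
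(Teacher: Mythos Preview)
Your proposal is correct and follows essentially the same route as the paper: the paper phrases the descent as a minimal-ordinal argument on the set of $\beta$ for which a suitable signed permutation module with nonvanishing cohomology exists, but the content of each step---Type~L reduction via Corollary~\ref{prodirlim+} and Proposition~\ref{epitypeI}, twisting by $R'$ and inducing (Lemma~\ref{twist}), applying Lemma~\ref{lesneq0}, then Type~H reduction via Lemma~\ref{typeIIsperm} and Proposition~\ref{epitypeII}---is identical to yours. One small point: the replacement of $\mathfrak{X}$ by $\cls\mathfrak{X}$ is unnecessary (and would technically weaken the conclusion), since the $G$-stabilisers of $\Ind^G_{H_i}(Q_j\hat\otimes_R R')$ are exact conjugates of the $H_i$-stabilisers of $Q_j$ rather than proper subgroups of them, so isomorphism-closure of $(\cll'\clh_R)_\beta\mathfrak{X}$ is all that is used.
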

\begin{proof}
Note first that $H_R^0(G,R) = R \neq 0$.

Consider the collection $\mathcal{O}$ of ordinals $\beta$ for which there exists $i \geq 0$ and $H \leq G$ such that $H \in (\cll'\clh_R)_\beta \mathfrak{X}$ and $H_R^i(G,R \llbracket G/H;\tau \rrbracket) \neq 0$, for some signed $R \llbracket G \rrbracket$ permutation module $R \llbracket G/H;\tau \rrbracket$. It suffices to prove $0 \in \mathcal{O}$. Observe first that $\mathcal{O}$ is non-empty, because $G \in (\cll'\clh_R)_\alpha \mathfrak{X}$ for some $\alpha$, and then $\alpha \in \mathcal{O}$ by hypothesis. So we need to show that if $0 \neq \beta \in \mathcal{O}$, there is some $\gamma < \beta$ such that $\gamma \in \mathcal{O}$.

So suppose $H \in (\cll'\clh_R)_\beta \mathfrak{X}$ and $H_R^i(G,R \llbracket G/H;\tau \rrbracket) \neq 0$. If $\beta$ is a limit, $H$ is in $(\cll'\clh_R)_\gamma \mathfrak{X}$ for some $\gamma < \beta$, and we are done; so assume $\beta$ is a successor ordinal. Now pick a direct system $\{H_j\}$ of subgroups of $H$ whose union is dense in $H$, with $H_j \in \clh_R(\cll'\clh_R)_{\beta-1}\mathfrak{X}$ for each $j$. Then we have a Type L system $\{R \llbracket G/H_j;\tau_j \rrbracket\}$ whose direct limit is $R \llbracket G/H;\tau \rrbracket$ by Corollary \ref{prodirlim+}, so we have an epimorphism $$\varinjlim_{PMod(R),j} H_R^i(G,R \llbracket G/H_j;\tau_j \rrbracket) \rightarrow H_R^i(G,R \llbracket G/H;\tau \rrbracket)$$ by Proposition \ref{epitypeI}: thus there is some $j$ such that $H_R^i(G,R \llbracket G/H_j;\tau_j \rrbracket) \neq 0$ too.

Suppose $R \llbracket G/H_j;\tau_j \rrbracket$ has twist homomorphism $\delta: H_j \rightarrow \{\pm 1\}$, and write $R'$ for a copy of $R$ on which $H_j$ acts by $h \cdot r = \delta(h)r$. Recall that $H_j \in \clh_R(\cll'\clh_R)_{\beta-1}\mathfrak{X}$; take a finite length signed permutation resolution $$0 \rightarrow P_n \rightarrow P_{n-1} \rightarrow \cdots \rightarrow P_0 \rightarrow R \rightarrow 0$$ of $R$ as a trivial $R \llbracket H_j \rrbracket$-module with stabilisers in $(\cll'\clh_R)_{\beta-1} \mathfrak{X}$, and apply induction $\Ind^G_{H_j}(- \hat{\otimes}_R R')$, where $- \hat{\otimes}_R R'$ is given the diagonal $H_j$ action, to get a sequence
\begin{align*}
0 &\rightarrow \Ind^G_{H_j} (P_n \hat{\otimes}_R R') \rightarrow \Ind^G_{H_j} (P_{n-1} \hat{\otimes}_R R') \rightarrow \cdots \\
&\rightarrow \Ind^G_{H_j} (P_0 \hat{\otimes}_R R') \rightarrow \Ind^G_{H_j} (R \hat{\otimes}_R R') \rightarrow 0
\end{align*}
which is exact by \cite[Theorem 6.10.8(c)]{R-Z}. Now $$\Ind^G_{H_j} (R \hat{\otimes}_R R') = R \llbracket G/H_j;\tau_j \rrbracket$$ by Lemma \ref{twist}; hence, by Lemma \ref{lesneq0}, $$H_R^{i+r}(G,\Ind^G_{H_j} (P_r \hat{\otimes}_R R')) \neq 0$$ for some $0 \leq r \leq n$. Now $\Ind^G_{H_j} (P_r \hat{\otimes}_R R')$ is a signed permutation module $P$ by Lemma \ref{indsperm}, so it has the structure of a Type H system, by Lemma \ref{typeIIsperm}, with components of the form $R \llbracket G/K;\tau' \rrbracket$, some $K \in (\cll'\clh_R)_{\beta-1} \mathfrak{X}$. By Proposition \ref{epitypeII}, $$\bigoplus_{PMod(R),K} H_R^{i+r}(G,R \llbracket G/K;\tau' \rrbracket) \rightarrow H_R^{i+r}(G,P)$$ is an epimorphism, so there is some $K$ such that $H_R^{i+r}(G,R \llbracket G/K;\tau' \rrbracket) \neq 0$. Since $K \in (\cllh_R)_{\beta-1} \mathfrak{X}$, this completes the inductive step of the proof.
\end{proof}

In particular, if the only $\mathfrak{X}$-subgroup of $G$ is the trivial one, by Lemma \ref{freeperm} there is some $n$ such that $H_R^n(G,R \llbracket G \rrbracket) \neq 0$. If $\mathfrak{X} = \mathfrak{F}$, we can say slightly more.

\begin{cor}
\label{neq0hf}
Suppose $G \in \widehat{\cll'\clh_R}\mathfrak{F}$ is of type $\FP_\infty$. Then there is some $n$ such that $H_R^n(G,R \llbracket G \rrbracket) \neq 0$.
\end{cor}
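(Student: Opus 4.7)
The plan is to begin with Theorem \ref{neq0h} applied to $\mathfrak{X}=\mathfrak{F}$, which yields a finite closed subgroup $H\leq G$, a signed permutation module $R\llbracket G/H;\sigma\rrbracket$, and an integer $n$ with $H_R^n(G,R\llbracket G/H;\sigma\rrbracket)\neq 0$. By Lemma \ref{twist}, this module may be written as $\Ind^G_H R'$, where $R'$ is $R$ equipped with the $H$-action twisted by the twist homomorphism $\varepsilon\colon H\to\{\pm 1\}$. The case $H=1$ has already been disposed of by the remark immediately preceding the corollary, so I may assume $H$ is a nontrivial finite group.

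My key move will be to descend one further step by using the $R\llbracket H\rrbracket$-surjection $R\llbracket H\rrbracket\twoheadrightarrow R'$, $h\mapsto\varepsilon(h)$, with some kernel $K$. Since $R\llbracket G\rrbracket\cong R\llbracket T\rrbracket\hat{\otimes}_R R\llbracket H\rrbracket$ as right $R\llbracket H\rrbracket$-modules (via any closed left transversal $T$ of $H$ in $G$, as in \cite[Proposition 1.3.4]{Wilson}), the functor $\Ind^G_H=R\llbracket G\rrbracket\hat{\otimes}_{R\llbracket H\rrbracket}-$ is exact, and carries $R\llbracket H\rrbracket$ to $R\llbracket G\rrbracket$. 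Applying it yields
\[
0\longrightarrow\Ind^G_H K\longrightarrow R\llbracket G\rrbracket\longrightarrow R\llbracket G/H;\sigma\rrbracket\longrightarrow 0
\]
in $PMod(R\llbracket G\rrbracket)$, and Lemma \ref{lesneq0} applied to the cohomological functor $H_R^\ast(G,-)$ gives either $H_R^n(G,R\llbracket G\rrbracket)\neq 0$ (the desired conclusion) or $H_R^{n+1}(G,\Ind^G_H K)\neq 0$.

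In the latter case I iterate: extend $R\llbracket H\rrbracket\twoheadrightarrow R'$ to a free resolution $F_\ast\to R'$ with each $F_i$ finitely generated free over $R\llbracket H\rrbracket$ and induce, producing a resolution of $R\llbracket G/H;\sigma\rrbracket$ whose terms $\Ind^G_H F_i = R\llbracket G\rrbracket^{\oplus k_i}$ are finitely generated free over $R\llbracket G\rrbracket$. Since $G$ is of type $\FP_\infty$, $H_R^\ast(G,-)$ commutes with the direct sums appearing here, so if at no stage we achieve $H_R^m(G,R\llbracket G\rrbracket)\neq 0$, then dimension shifting along the induced resolution forces $H_R^{n+i}(G,\Ind^G_H L_i)\neq 0$ for every $i\geq 0$, where $L_i$ denotes the $i$-th syzygy of $R'$ over $R\llbracket H\rrbracket$.

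The \emph{hardest part} will be to extract a contradiction in this final situation. Because $R'$ need not have finite projective dimension over $R\llbracket H\rrbracket$ (for instance when $H=\mathbb{Z}/p$ and $p$ is not invertible in $R$), the induced resolution is genuinely infinite, and a naive application of Lemma \ref{lesneq0} to the entire resolution is unavailable. I expect the finish to rest on the fact that, for $G$ of type $\FP_\infty$, $H_R^\ast(G,-)$ commutes with inverse limits of profinite modules (as invoked in the proof of Proposition \ref{epitypeII} via Theorem 3.7.2 of \cite{S-W}): combined with a careful analysis of the truncations of the induced resolution, and an appeal to either the Type L or Type H machinery of Sections \ref{typeIsys}--\ref{typeIIsys} to control the passage to the limit, this should force the assumed vanishing $H_R^\ast(G,R\llbracket G\rrbracket)=0$ to propagate into $H_R^n(G,R\llbracket G/H;\sigma\rrbracket)=0$, contradicting Theorem \ref{neq0h}.
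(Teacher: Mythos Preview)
Your proposal has a genuine gap in the final step, and the sketch you give for closing it does not work. The difficulty you identify is real: when $|H|$ is not invertible in $R$, the $R[H]$-module $R'$ has infinite projective dimension, so dimension-shifting along the induced resolution only produces the statement that $H_R^{n+i}(G,\Ind^G_H L_i)\neq 0$ for every $i\geq 0$, which is not a contradiction. Neither the Type L nor the Type H machinery applies here: there is no relevant direct system of epimorphisms and no decomposition of the $L_i$ into components indexed by an inverse system. The commutation of $H_R^\ast(G,-)$ with inverse limits also does not help, since there is no natural way to express $R\llbracket G/H;\sigma\rrbracket$ as an inverse limit along which the vanishing of $H_R^\ast(G,R\llbracket G\rrbracket)$ propagates. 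In short, the ``hardest part'' is not merely hard but, as stated, unworkable.

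The paper's argument sidesteps this entirely by a different idea: instead of resolving $R'$ over $R[H]$, one passes to an open normal subgroup $U\trianglelefteq G$ with $U\cap H=1$ (which exists because $H$ is finite and the open normal subgroups form a base at the identity). The Lyndon--Hochschild--Serre spectral sequence then forces $H_R^i(U,R\llbracket G/H;\sigma\rrbracket)\neq 0$ for some $i\leq n$. The point is that $U$ acts \emph{freely} on $G/H$ (the stabiliser of $gH$ in $U$ is $U\cap H^{g^{-1}}=(U\cap H)^{g^{-1}}=1$), so by Lemma \ref{freeperm} the module $R\llbracket G/H;\sigma\rrbracket$ is finitely generated free over $R\llbracket U\rrbracket$, giving $H_R^i(U,R\llbracket U\rrbracket)\neq 0$. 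Finally, since $U$ has finite index, $\Ind_U^G=\mathrm{Coind}_U^G$ and Shapiro's lemma yield $H_R^i(G,R\llbracket G\rrbracket)=H_R^i(U,R\llbracket U\rrbracket)\neq 0$. The key insight you are missing is that one should change the group rather than resolve the module.
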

\begin{proof}
From Theorem \ref{neq0h} we know there is a finite $H \leq G$, a $\sigma$ and an $n$ such that $H_R^n(G,R \llbracket G/H;\sigma \rrbracket) \neq 0$. Pick an open normal subgroup $U \leq G$ such that $U \cap H = 1$: such a subgroup exists because the open normal subgroups of $G$ form a fundamental system of neighbourhoods of the identity by \cite[Lemma 2.1.1]{R-Z}. Then the Lyndon-Hochschild-Serre spectral sequence \cite[Theorem 4.2.6]{S-W} gives that $H_R^i(U,R \llbracket G/H;\sigma \rrbracket) \neq 0$, for some $i \leq n$. As a $U$-space, the stabiliser of the coset $gH, g \in G$, has the form $$U \cap H^{g^{-1}} = U^{g^{-1}} \cap H^{g^{-1}} = (U \cap H)^{g^{-1}} = 1;$$ hence $G/H$ is free as a $U$-space, so as an $R \llbracket U \rrbracket$-module $R \llbracket G/H;\sigma \rrbracket$ is finitely generated and free by Lemma \ref{freeperm}, and additivity gives $H_R^i(U,R \llbracket U \rrbracket) \neq 0$. Now
\begin{align*}
H_R^i(G,R \llbracket G \rrbracket) = &= H_R^i(G,Ind_U^G R \llbracket U \rrbracket) \\
&= H_R^i(G,Coind_U^G R \llbracket U \rrbracket) &&\text{ by \cite[(3.3.7)]{S-W}} \\
&= H_R^i(U,R \llbracket U \rrbracket)  \neq 0 &&\text{ by \cite[Theorem 10.6.5]{R-Z}},
\end{align*}
as required.
\end{proof}

As in \cite[Theorem A]{Krop}, there is no particular reason to restrict from $\Ext$-functors to group cohomology: all we need to know is that the first variable of these functors is of type $FP_\infty$ over $R \llbracket G \rrbracket$, and that it is projective on restriction to $R$. We sketch the proof of the theorem which follows from this observation; it is almost exactly the same as the proof of Theorem \ref{neq0h}.

\begin{thm}
\label{neq0ext}
Suppose $G \in \widehat{\cll'\clh_R} \mathfrak{X}$. Suppose $M \in PMod(R \llbracket G \rrbracket)$ is projective as an $R$-module, by restriction, and is of type $\FP_\infty$ over $R \llbracket G \rrbracket$. Then there is some subgroup $H \leq G$ which is in $\mathfrak{X}$, some signed $R \llbracket G \rrbracket$ permutation module $R \llbracket G/H;\sigma \rrbracket$ and some $n$ such that $\Ext_{R \llbracket G \rrbracket}^n(M,M \hat{\otimes}_R R \llbracket G/H;\sigma \rrbracket) \neq 0$.
\end{thm}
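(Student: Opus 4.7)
The plan is to mimic the proof of Theorem~\ref{neq0h} almost verbatim, replacing the functor $H_R^*(G,-) = \Ext^*_{R \llbracket G \rrbracket}(R,-)$ with $T^*(-) := \Ext^*_{R \llbracket G \rrbracket}(M, M \hat{\otimes}_R -)$. Because $M$ is projective (hence flat) as an $R$-module, the functor $M \hat{\otimes}_R -$ is exact on $PMod(R \llbracket G \rrbracket)$ when each target is given the diagonal $G$-action, so composing with $\Ext^*_{R \llbracket G \rrbracket}(M,-)$ gives a $(-\infty,\infty)$ cohomological functor $T^*$ from $PMod(R \llbracket G \rrbracket)$ to $Mod(U(R))$. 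The base value is nonzero: $T^0(R) = \Hom_{R \llbracket G \rrbracket}(M, M \hat{\otimes}_R R) \cong \Hom_{R \llbracket G \rrbracket}(M,M)$ contains $\id_M$.

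Let $\mathcal{O}$ be the collection of ordinals $\beta$ for which there exist $i \geq 0$, a subgroup $H \leq G$ with $H \in (\cll'\clh_R)_\beta \mathfrak{X}$, and a signed permutation module $R \llbracket G/H; \tau \rrbracket$ with $T^i(R \llbracket G/H; \tau \rrbracket) \neq 0$. Taking $H = G$, $\tau$ trivial and $i = 0$ shows that $\mathcal{O}$ contains any $\alpha$ with $G \in (\cll'\clh_R)_\alpha \mathfrak{X}$, so $\mathcal{O}$ is non-empty. It suffices to show $0 \in \mathcal{O}$, which, by transfinite descent, reduces (the limit case being immediate from the definition of $(\cll'\clh_R)_\beta$ at limit $\beta$) to showing that any nonzero successor $\beta = \gamma+1 \in \mathcal{O}$ satisfies $\gamma \in \mathcal{O}$.

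So suppose $H \in (\cll'\clh_R)_{\gamma+1} \mathfrak{X}$ witnesses $\gamma+1 \in \mathcal{O}$, and pick a direct system $\{H_j\}$ of subgroups of $H$ with $H_j \in \clh_R(\cll'\clh_R)_\gamma \mathfrak{X}$ and $\bigcup H_j$ dense in $H$. Corollary~\ref{prodirlim+} yields a Type~L system of signed permutation modules $\{R \llbracket G/H_j; \tau_j \rrbracket\}$ with direct limit $R \llbracket G/H; \tau \rrbracket$; Lemma~\ref{typeI+}, which applies because $M$ is $R$-projective, upgrades this to a Type~L system $\{M \hat{\otimes}_R R \llbracket G/H_j; \tau_j \rrbracket\}$ with direct limit $M \hat{\otimes}_R R \llbracket G/H; \tau \rrbracket$. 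Proposition~\ref{epitypeI} then yields an epimorphism
$$\varinjlim_j T^i(R \llbracket G/H_j; \tau_j \rrbracket) \twoheadrightarrow T^i(R \llbracket G/H; \tau \rrbracket) \neq 0,$$
so $T^i(R \llbracket G/H_j; \tau_j \rrbracket) \neq 0$ for some $j$. Write $\delta : H_j \to \{\pm 1\}$ for the twist homomorphism of $R \llbracket G/H_j; \tau_j \rrbracket$ and $R'$ for $R$ with the $H_j$-action by $\delta$. Take a finite length signed permutation resolution $0 \to P_n \to \cdots \to P_0 \to R \to 0$ over $R \llbracket H_j \rrbracket$ with stabilisers in $(\cll'\clh_R)_\gamma \mathfrak{X}$ and apply $\Ind^G_{H_j}(- \hat{\otimes}_R R')$: by \cite[Theorem~6.10.8(c)]{R-Z} together with Lemmas~\ref{indsperm} and~\ref{twist}, the result is an exact sequence of signed $R \llbracket G \rrbracket$-permutation modules terminating in $R \llbracket G/H_j; \tau_j \rrbracket$. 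Tensoring with $M$ over $R$ preserves exactness, and Lemma~\ref{lesneq0} applied to the cohomological functor $\Ext^*_{R \llbracket G \rrbracket}(M, -)$ produces $0 \leq r \leq n$ with $T^{i+r}(P) \neq 0$, where $P := \Ind^G_{H_j}(P_r \hat{\otimes}_R R')$.

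By Lemma~\ref{indsperm}, $P$ is a signed permutation module, and by Lemma~\ref{typeIIsperm} it carries a Type~H structure whose components are of the form $R \llbracket G/K; \tau' \rrbracket$ for various $K \in (\cll'\clh_R)_\gamma \mathfrak{X}$. Lemma~\ref{typeII+} transfers this to a Type~H structure on $M \hat{\otimes}_R P$ with components $M \hat{\otimes}_R R \llbracket G/K; \tau' \rrbracket$, and Proposition~\ref{epitypeII} then produces an epimorphism
$$\bigoplus_K T^{i+r}(R \llbracket G/K; \tau' \rrbracket) \twoheadrightarrow T^{i+r}(P) \neq 0,$$
forcing $T^{i+r}(R \llbracket G/K; \tau' \rrbracket) \neq 0$ for at least one component $K \in (\cll'\clh_R)_\gamma \mathfrak{X}$. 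Thus $\gamma \in \mathcal{O}$, completing the induction. The only real obstacle, and the reason this is not purely a restatement of Theorem~\ref{neq0h}, is that $M \hat{\otimes}_R -$ must preserve the exactness of signed permutation resolutions, Type~L direct limits and Type~H decompositions; all three are furnished, respectively, by the $R$-projectivity of $M$ and by Lemmas~\ref{typeI+} and~\ref{typeII+}, which were tailored precisely for this purpose.
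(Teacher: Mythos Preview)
Your proposal is correct and follows essentially the same approach as the paper: the paper's own proof is a brief sketch instructing the reader to rerun the proof of Theorem~\ref{neq0h} with $\Ext_{R \llbracket G \rrbracket}^i(M,-)$ in place of $H_R^i(G,-)$ and $M \hat{\otimes}_R R \llbracket X \rrbracket$ in place of $R \llbracket X \rrbracket$, noting exactly the three preservation properties (Type~L via Lemma~\ref{typeI+}, Type~H via Lemma~\ref{typeII+}, and exactness of the induced resolution) that you invoke. The only cosmetic difference is that the paper justifies exactness of the tensored resolution by observing that signed permutation resolutions are $R$-split, whereas you use $R$-projectivity of $M$; both are valid.
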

\begin{proof}
Replace $H_R^i(G,-)$ with $\Ext_{R \llbracket G \rrbracket}^i(M,-)$. Replace the signed permutation module coefficients $R \llbracket X \rrbracket$ of these functors with $M \hat{\otimes}_R R \llbracket X \rrbracket$, with the diagonal $G$-action. Then the proof goes through as before, after noting three things: that $M \hat{\otimes}_R -$ preserves Type L structures by Lemma \ref{typeI+}, that it preserves Type H structures by Lemma \ref{typeII+}, and that it preserves exactness of finite length signed permutation resolutions because finite length signed permutation resolutions of $R$ are $R$-split.
\end{proof}

Once again, if the only $\mathfrak{X}$-subgroup of $G$ is the trivial one, by Lemma \ref{freeperm} there is some $n$ such that $\Ext_{R \llbracket G \rrbracket}^n(M,M \hat{\otimes}_R R \llbracket G \rrbracket) \neq 0$. There is also a result corresponding to Corollary \ref{neq0hf}.

\section{Totally Disconnected Polish \texorpdfstring{$R$}{R}-modules}
\label{tdmod}

To use the results of Section \ref{main} to derive information about the group structure of $G$, as in \cite[Section 2]{Krop3}, we need the work of \cite{Boggi}, which provides the Lyndon-Hochschild-Serre spectral sequence we need.

All profinite groups and rings in this and the next section will be assumed to be countably based, unless stated otherwise -- for background on countably based profinite groups, see \cite[Section 2.6]{R-Z}. Note in particular that this class includes all finitely generated groups, and hence all pro-$p$ groups of type $\FP_1$ over $\mathbb{Z}_p$ (and all subgroups of such groups), by \cite[Remark 3.5(c)]{Myself2}. By \cite[Remark 3.5(a)]{Myself2}, in fact all prosoluble groups of type $\FP_1$ over $\hat{\mathbb{Z}}$ are finitely generated. On the other hand, the following example shows that a group in $\widehat{\cll'\clh_{\hat{\mathbb{Z}}}}\mathfrak{F}$ need not be countably based even if it is of type $\FP_1$. This example is adapted from \cite[Example 2.6]{Damian}; the approach is the same, but we construct groups which are not countably based.

\begin{example}
Consider a product of copies of $A_5$, the alternating group on $5$ letters, indexed by a set $I$. Suppose $I$ has cardinality $\aleph_\alpha$ for some ordinal $\alpha$. Since $A_5$ is simple, the finite quotients of $\prod_I A_5$ are all $\prod_{i=1}^n A_5$. By \cite[Example 2.6]{Damian}, the minimal number of generators of $\prod_{i=1}^n A_5$ tends to $\infty$ as $n$ does, but the augmentation ideal $\ker(\hat{\mathbb{Z}} \llbracket \prod_{i=1}^n A_5 \rrbracket \rightarrow \hat{\mathbb{Z}})$ is $2$-generated for all $n$. It follows by \cite[Theorem 2.3]{Damian} that $\prod_I A_5$ is of type $\FP_1$ over $\hat{\mathbb{Z}}$.

Since $A_5$ is discrete, the family $F$ of neighbourhoods of $1$ in $\prod_I A_5$ of the form $$(\prod_{\{i \in I: i \neq i_1, \ldots, i_t\}} A_5) \times \{1\}_{i_1} \times \cdots \times \{1\}_{i_t},$$ for any $i_1, \ldots, i_t \in I$, is a fundamental system of neighbourhoods of $1$ in $\prod_I A_5$. Since $I$ has cardinality $\aleph_\alpha$, $F$ does too. Hence by \cite[Proposition 2.6.1]{R-Z} $\prod_I A_5$ has weight $\aleph_\alpha$. In particular, for $\alpha > 0$, $\prod_I A_5$ is not countably based.

Finally, to see that $\prod_I A_5 \in \widehat{\cll'\clh_{\hat{\mathbb{Z}}}}\mathfrak{F}$, the easiest way is to note that $\bigoplus_I A_5$ is dense in $\prod_I A_5$, and $\bigoplus_I A_5$ is clearly locally finite, so we have $\prod_I A_5 \in \cll'\mathfrak{F}$.
\end{example}

\begin{question}
Are there profinite groups of type $\FP_2$ which are not finitely generated?
\end{question}

We work now with the categories of modules developed in \cite{Boggi}: given $R \in PRng$, we call a topological $R$-module $M$ a \emph{totally disconnected} (or \emph{t.d.}) \emph{Polish $R$-module} if $M$ is complete, Hausdorff, has a basis of neighbourhoods of $0$ consisting of open submodules and is second countable. Thus all countably based profinite $R$-modules are t.d. Polish $R$-modules. We write $Mod^{t.d.}(R)$ for the category of t.d. Polish $R$-modules and continuous module homomorphisms.

It is possible to define, for a countably based commutative profinite ring $R$ and a countably based profinite group $G$, the homology and cohomology groups of $G$ over $R$ with coefficients in $Mod^{t.d.}(R \llbracket G \rrbracket)$. In particular, we need the following result.

\begin{lem}
\label{td=pro}
Write $H_R^n(G,-)$ for the usual $n$th cohomology group of $G$ with profinite coefficients, and $\underline{H}_R^n(G,M)$ for the $n$th cohomology group of $G$ with coefficients in $Mod^{t.d.}(R \llbracket G \rrbracket)$. Suppose $M \in Mod^{t.d.}(R \llbracket G \rrbracket)$ is profinite. Then, for all $n \geq 0$, $\underline{H}_R^n(G,M) = H_R^n(G,M)$.
\end{lem}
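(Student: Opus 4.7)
The plan is to compute both cohomology groups using a single projective resolution of $R$ in $PMod(R \llbracket G \rrbracket)$ and show that the resulting complexes of continuous homomorphisms agree.

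First I would take a projective resolution $P_\ast \to R \to 0$ in $PMod(R \llbracket G \rrbracket)$; since $G$ and $R$ are countably based, one can arrange (for instance via the standard bar resolution) that each $P_i$ is countably based profinite, hence in particular a t.d. Polish $R \llbracket G \rrbracket$-module. By definition,
$$H_R^n(G, M) = H^n\bigl(\Hom_{R \llbracket G \rrbracket}(P_\ast, M)\bigr).$$
Next, I would observe that the inclusion $PMod(R \llbracket G \rrbracket) \hookrightarrow Mod^{t.d.}(R \llbracket G \rrbracket)$ is exact and that continuous $R \llbracket G \rrbracket$-homomorphisms between a profinite module and a (profinite, hence t.d. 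Polish) module $M$ are the same whether computed in the profinite or the t.d. Polish category. So the complex $\Hom_{R \llbracket G \rrbracket}(P_\ast, M)$ is literally the same complex in both settings.

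It then remains to argue that $P_\ast$ also computes $\underline{H}_R^n(G, M)$, i.e.\ that each profinite projective $P_i$ is acyclic for the functor $\Hom_{R \llbracket G \rrbracket}(-, M)$ in Boggi's t.d.\ Polish setting: equivalently, that $\underline{\Ext}^n_{R \llbracket G \rrbracket}(P_i, M) = 0$ for $n \geq 1$. This is the main obstacle, since it requires engaging with the specific definition of $\underline{H}_R^n$ used in \cite{Boggi}. I would handle it by reducing to the case of finite discrete coefficients: writing $M = \varprojlim_k M_k$ as an inverse limit of finite discrete $R \llbracket G \rrbracket$-modules, the profinite cohomology satisfies $H_R^n(G, M) = \varprojlim_k H_R^n(G, M_k)$ by \cite[Theorem 3.7.2]{S-W}, and one checks (from Boggi's construction) that $\underline{H}_R^n(G, -)$ has the analogous continuity property along such inverse systems of profinite modules. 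For finite discrete coefficients both theories reduce to ordinary continuous cochain cohomology, so they agree there.

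Putting these pieces together, $H_R^n(G, M) = \underline{H}_R^n(G, M)$ for all $n \geq 0$. The hard part is genuinely the acyclicity/continuity verification in the t.d.\ Polish setting; everything else is a formal comparison of derived functors computed by the same complex.
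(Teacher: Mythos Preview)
Your proposal is correct in substance but considerably more elaborate than necessary, and you have mislocated the difficulty. The paper's proof is a single sentence: both cohomology theories can be computed using the bar resolution (citing \cite[p.28]{Boggi}), so the cochain complexes are literally identical and the result follows immediately.

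You actually have this observation in your first step --- you mention the bar resolution as an instance of $P_\ast$ --- but you then go on to treat the acyclicity of profinite projectives in the t.d.\ Polish category as the ``main obstacle'', and propose to resolve it via a continuity-and-reduction-to-finite-coefficients argument. This extra work is unnecessary: in Boggi's framework $\underline{H}_R^n(G,M)$ is computed directly by the bar complex, not as a derived functor for which one must independently verify that an arbitrary profinite projective resolution serves. Once you fix the bar resolution on both sides, there is nothing left to check beyond the (trivial) observation that $\Hom_{R\llbracket G\rrbracket}(P_n,M)$ means the same thing in both categories when $P_n$ and $M$ are profinite. Your continuity argument would work as an alternative route, but it replaces a one-line citation with a genuine verification about Boggi's theory that the paper does not need to carry out.
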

\begin{proof}
Both can be calculated using the bar resolution (see \cite[p.28]{Boggi}).
\end{proof}

As a result of this, we will write $H_R^n(G,M)$ for both cohomology theories.

\section{Soluble Groups}
\label{solgp}

We now establish some properties of nilpotent profinite groups; here we take nilpotent to mean that a group's (abstract) upper central series becomes the whole group after finitely many steps. All these results correspond closely to known ones in the abstract case, but there doesn't seem to be a good profinite reference, so they are included here.

\begin{lem}
Each term in the upper central series of a profinite group is closed.
\end{lem}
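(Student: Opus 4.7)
The plan is to prove this by induction on $n$, showing that each term $Z_n(G)$ of the upper central series is closed in $G$. The base case $Z_0(G) = 1$ is trivial, since profinite groups are Hausdorff, so the trivial subgroup is closed.

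The key observation I would use is a standard fact about general Hausdorff topological groups: the center $Z(H)$ of any Hausdorff topological group $H$ is closed. This is because the commutator map $c_h: H \to H$ defined by $c_h(g) = ghg^{-1}h^{-1}$ is continuous for each fixed $h \in H$, so $\{g \in H : [g,h] = 1\} = c_h^{-1}(\{1\})$ is closed (singletons being closed in Hausdorff spaces). Then $Z(H) = \bigcap_{h \in H} c_h^{-1}(\{1\})$ is an intersection of closed sets, hence closed.

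For the inductive step, suppose $Z_n(G)$ is closed. Since $Z_n(G)$ is an (abstractly) normal closed subgroup of $G$, the quotient $G/Z_n(G)$ is a Hausdorff topological group (in fact profinite). Applying the observation above, $Z(G/Z_n(G))$ is closed in $G/Z_n(G)$. By the definition of the upper central series, $Z_{n+1}(G)$ is the preimage of $Z(G/Z_n(G))$ under the continuous quotient map $G \to G/Z_n(G)$, and the preimage of a closed set under a continuous map is closed, so $Z_{n+1}(G)$ is closed in $G$.

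There is no real obstacle here; the argument is entirely formal and is essentially the same as in the abstract case once one knows that the topological and abstract centers of a Hausdorff topological group coincide and that the former is closed. The mild subtlety, namely that $G/Z_n(G)$ inherits a Hausdorff topology because $Z_n(G)$ is closed and normal, is a standard fact about topological groups and requires no further work.
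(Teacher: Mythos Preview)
Your proof is correct and takes essentially the same approach as the paper: both show that the centre of a Hausdorff topological group is closed by writing it as an intersection of preimages of $\{1\}$ under commutator maps, and then pull back along the quotient map for the inductive step. The only cosmetic difference is that you start the induction at $Z_0(G)=1$ while the paper starts at $Z_1(G)$.
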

\begin{proof}
We show first that $Z_1(G)$ is closed. For each $g \in G$, the centraliser $C_G(g)$ of $g$ is the inverse image of $1$ in the continuous map $G \rightarrow G, x \mapsto [g,x]$, so it is closed. Then $Z_1(G) = \bigcap_{g \in G} C_G(g)$ is closed.

Now we use induction: suppose $Z_{i-1}(G)$ is closed. We know the centre of $G/Z_{i-1}(G)$ is closed, and $Z_i(G)$ is the preimage of $Z(G/Z_{i-1}(G))$ under the projection $G \rightarrow G/Z_{i-1}(G)$; hence $Z_i(G)$ is closed too.
\end{proof}

Thus nilpotent profinite groups are exactly the profinite groups which are nilpotent as abstract groups.

Since $G$ is nilpotent as an abstract group, write $$G = C_{abs}^1(G) \rhd C_{abs}^2(G) \rhd \cdots$$ for the terms of the abstract lower central series of $G$, and define the profinite upper central series by $C^n(G) = \overline{C_{abs}^n(G)}$. Each $C^n(G)$ is normal, as the closure of a normal subgroup. Moreover, since $[G,C_{abs}^n(G)] = C_{abs}^{n+1}(G)$, we have $\overline{[G,C^n(G)]} = \overline{[G,C_{abs}^n(G)]} = C^{n+1}(G)$. If $G$ has nilpotency class $k$, $C_{abs}^{k+1}(G)=1 \Rightarrow C^{k+1}(G)=1$. In particular $C^n(G)$ has nilpotency class $k+1-n$, for $n \leq k$.

\begin{lem}
\label{fgnilpot}
Suppose $G$ is a finitely generated nilpotent profinite group of nilpotency class $k$. Then every subgroup $H \leq G$ is finitely generated.
\end{lem}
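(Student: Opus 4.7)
The plan is to reduce to the classical abstract-group theory of finitely generated nilpotent groups, via the dense subgroup $\Gamma$ generated (abstractly, not topologically) by a finite topological generating set for $G$. First I would observe that since $C^{k+1}(G)=1$ and $C^{k+1}_{abs}(G) \subseteq \overline{C^{k+1}_{abs}(G)} = C^{k+1}(G)$, the group $G$ is nilpotent of class at most $k$ even when regarded purely as an abstract group. Hence for any finite topological generating set $\{g_1,\ldots,g_m\}$ of $G$, the abstract subgroup $\Gamma = \langle g_1,\ldots,g_m\rangle$ is a finitely generated abstract nilpotent group, and by classical theory it is polycyclic. So one may fix a subnormal series $\Gamma = \Gamma_0 \supseteq \Gamma_1 \supseteq \cdots \supseteq \Gamma_n = 1$ with each quotient $\Gamma_i/\Gamma_{i+1}$ cyclic.

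Next I would lift this to $G$ by taking closures. Density of $\Gamma_i$ in $\overline{\Gamma_i}$ together with continuity of commutator and conjugation makes each $\overline{\Gamma_{i+1}}$ normal in $\overline{\Gamma_i}$; and $\overline{\Gamma_0} = \overline{\Gamma} = G$, $\overline{\Gamma_n} = 1$. Since the composite $\Gamma_i \hookrightarrow \overline{\Gamma_i} \twoheadrightarrow \overline{\Gamma_i}/\overline{\Gamma_{i+1}}$ has dense image and factors through the cyclic group $\Gamma_i/\Gamma_{i+1}$, the quotient $\overline{\Gamma_i}/\overline{\Gamma_{i+1}}$ is the closure of an abstract cyclic subgroup of a profinite group, hence pro-cyclic.

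Finally, intersect the series with $H$: put $H_i = H \cap \overline{\Gamma_i}$. This gives a chain $H = H_0 \supseteq \cdots \supseteq H_n = 1$ of closed subgroups with each $H_{i+1}$ normal in $H_i$, and the inclusion $H_i \hookrightarrow \overline{\Gamma_i}$ induces an injection $H_i/H_{i+1} \hookrightarrow \overline{\Gamma_i}/\overline{\Gamma_{i+1}}$. So each factor $H_i/H_{i+1}$ is a closed subgroup of a pro-cyclic group, and is therefore itself topologically generated by a single element. Since an extension of topologically finitely generated profinite groups is again topologically finitely generated, induction along the series shows that $H$ is topologically generated by at most $n$ elements.

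The main obstacle I would expect is checking that taking closures preserves the subnormal structure --- specifically, that $\overline{\Gamma_{i+1}}$ is normal in $\overline{\Gamma_i}$ and that $\overline{\Gamma_i}/\overline{\Gamma_{i+1}}$ is pro-cyclic --- but both reduce to standard density-plus-continuity arguments. A subsidiary point worth recording is that $G$ really is abstractly nilpotent of class $\leq k$, which is what lets the classical polycyclic theory apply to $\Gamma$; this follows from the inclusion $C^{k+1}_{abs}(G) \subseteq C^{k+1}(G) = 1$ noted above.
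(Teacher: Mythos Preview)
Your proof is correct and takes a genuinely different route from the paper's. The paper argues by induction on the nilpotency class $k$: it shows that $C^2(G)$ is topologically finitely generated (by observing that $C^2(G)$ is the closure of $C^2_{abs}(\Gamma)$, which is abstractly finitely generated by the classical theory of finitely generated nilpotent groups), then applies the inductive hypothesis to $H \cap C^2(G)$ (class $\leq k-1$) and the abelian base case to $H/(H \cap C^2(G))$, assembling the two. You instead exploit the full polycyclic structure of $\Gamma$ at once: take a polycyclic series for $\Gamma$, close it up to a poly-procyclic series for $G$, and intersect with $H$.

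Your approach has two advantages worth noting. First, it avoids a separate base case and the appeal to the structure theory of finitely generated abelian profinite groups. Second, it actually proves more than the lemma states: since the length $n$ of the series depends only on $G$ (not on $H$), you get directly that $G$ has finite rank $\leq n$, whereas the paper's argument, as written, only yields that each individual $H$ is finitely generated. The paper's route has the minor virtue of staying closer to the lower central series, which it uses again in the next lemma, but mathematically your argument is at least as clean.
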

\begin{proof}
Let $X$ be a finite generating set for $G$. Write $G^{abs}$ for the dense subgroup of $G$ generated abstractly by $X$, and $C_{abs}^n(G^{abs})$ for the terms in its (abstract) upper central series. Now $$\overline{C_{abs}^2(G^{abs})} = \overline{[G^{abs},G^{abs}]} = \overline{[\overline{G^{abs}},\overline{G^{abs}}]} = \overline{[G,G]} = C^2(G).$$ By \cite[5.2.17]{Robinson}, $C_{abs}^2(G^{abs})$ is abstractly finitely generated, so its closure $C^2(G)$ is topologically finitely generated.

We now prove the lemma by induction on $k$: when $k=1$, $G$ is abelian, and we are done by \cite[Proposition 8.2.1]{Wilson}. So suppose the result holds for $k-1$. Since $C^2(G)$ has class $k-1$ and $G/C^2(G)$ has class $1$, by hypothesis $H \cap C^2(G)$ and $H/(H \cap C^2(G))$ are both finitely generated, and hence $H$ is too.
\end{proof}

\begin{lem}
\label{tfnilpot}
Suppose $G$ is a finitely generated torsion-free nilpotent profinite group. Then $G$ is poly-(torsion-free procyclic).
\end{lem}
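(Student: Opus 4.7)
The plan is to induct on the nilpotency class $k$ of $G$. For the base case $k=1$, $G$ is a finitely generated torsion-free abelian profinite group, hence by the structure theorem for such groups (cf.\ \cite[\S8.2]{Wilson}) a finite direct product of procyclic groups; torsion-freeness of $G$ forces each factor to be torsion-free procyclic, so the chain of partial products is a poly-(torsion-free procyclic) series.

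For the inductive step, assume $k \geq 2$ and the lemma holds in smaller class. Let $Z = Z_1(G)$ denote the centre of $G$; by Lemma 6.4 it is closed and normal, by Lemma \ref{fgnilpot} it is finitely generated, and it is torsion-free as a subgroup of $G$. Hence the base case applies to $Z$. The heart of the argument is to prove that $G/Z$ is torsion-free. Granted that, $G/Z$ is finitely generated, torsion-free, and nilpotent of class $k-1$, so the inductive hypothesis provides a poly-(torsion-free procyclic) series for $G/Z$; lifting it back to $G$ and concatenating with the series for $Z$ gives the desired series for $G$.

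To show $G/Z$ is torsion-free, I plan to reduce to the classical theorem of Mal'cev: in any (abstract) torsion-free nilpotent group, if a power $g^n$ commutes with $h$, then so does $g$. Given $g \in G$ with $g^n \in Z$ and any $h \in G$, consider the abstract subgroup $H = \langle g, h \rangle$, formed without taking closure. As an abstract subgroup of the (abstractly) nilpotent group $G$, $H$ is nilpotent of class at most $k$, and it is torsion-free as a subgroup of $G$. Since $g^n \in Z$ is central in $G$, it is central in $H$, so Mal'cev's theorem applied in $H$ gives $[g,h] = 1$; as $h \in G$ was arbitrary, $g \in Z$. The main obstacle is precisely this isolation step, since the profinite topology does not directly accommodate root-extraction arguments; the workaround is to pass to the abstract two-generator subgroup, where nilpotency class and torsion-freeness both descend from $G$ and the abstract Mal'cev theorem applies without modification.
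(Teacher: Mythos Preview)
Your proof is correct and follows the same inductive skeleton as the paper's: induct on the nilpotency class, handle the abelian base case via the structure theorem, and for the inductive step reduce to showing that $G/Z_1(G)$ is torsion-free so that the hypothesis applies to it. The only difference lies in how that torsion-freeness is established. The paper argues directly that each successive quotient $Z_{j+1}(G)/Z_j(G)$ is torsion-free by exhibiting, for each nontrivial $x \in Z_2(G)/Z_1(G)$, a commutator homomorphism $Z_2(G)/Z_1(G) \to Z_1(G)$, $y \mapsto [g,y]$, that does not kill $x$; this is essentially the standard proof that the centre of a torsion-free nilpotent group is isolated. You instead invoke that isolation result as a black box (under the name of Mal'cev), which is legitimate since the paper has already observed that a nilpotent profinite group is nilpotent as an abstract group and torsion-freeness passes to abstract subgroups. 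Your detour through the two-generator abstract subgroup $H = \langle g, h \rangle$ is not really needed---the isolation theorem applies directly to $G$ viewed abstractly---but it does no harm.
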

\begin{proof}
Suppose $G$ has nilpotency class $k$. Consider the upper central series of $G$, $$1 \lhd Z_1(G) \lhd \cdots \lhd Z_k(G)=G.$$ If we show that every factor $Z_{j+1}(G)/Z_j(G)$ is torsion-free, then it will follow by Lemma \ref{fgnilpot} that every $Z_{i+1}(G)/Z_i(G)$ is finitely generated torsion-free abelian, hence poly-(torsion-free procyclic), and we will be done. Both these facts are known in the abstract case.

Clearly $Z_1(G)$ is torsion-free, and we use induction on $k$, on the hypothesis that $Z_{j+1}(G)/Z_j(G)$ is torsion-free whenever $G$ of nilpotency class $k$ has $Z_1(G)$ torsion-free. $k=1$ is trivial. For $k>1$, we show first that $Z_2(G)/Z_1(G)$ is torsion-free by showing, for each $1 \neq x \in Z_2(G)/Z_1(G)$, that there is some $\phi \in Hom(Z_2(G)/Z_1(G), Z_1(G))$ such that $\phi(x) \neq 1$. Then the result follows because $Z_1(G)$ is torsion-free. So pick a preimage $x'$ of $x$ in $Z_2(G)$. $x' \notin Z_1(G)$, so there is some $g \in G$ such that $1 \neq [g,x'] \in Z_1(G)$. Now define $$\phi': Z_2(G) \rightarrow Z_1(G), y \mapsto [g,y];$$ note that $\phi'$ is a homomorphism, because $$[g,y_1y_2]=[g,y_1][y_1,[g,y_2]][g,y_2]=[g,y_1][g,y_2],$$ since $[g,y_2] \in Z_1(G) \Rightarrow [y_1,[g,y_2]]=1$. $\phi'(x') \neq 1$, and $Z_1(G) \leq ker(\phi')$, so this induces $\phi: Z_2(G)/Z_1(G) \rightarrow Z_1(G)$ such that $\phi(x) \neq 1$, as required.

By hypothesis, the centre of $G/Z_1(G)$ being torsion-free implies that $$Z_{j+1}(G)/Z_j(G) = Z_j(G/Z_1(G))/Z_{j-1}(G/Z_1(G))$$ is torsion-free, for each $j$.
\end{proof}

Recall that a profinite group $G$ is said to have \emph{finite rank} if there is some $r$ such that every subgroup $H$ of $G$ is generated by $r$ elements. By \cite[Proposition 8.1.1]{Wilson}, the class of profinite groups of finite rank is closed under taking subgroups, quotients and extensions.

\begin{lem}
\label{poincare}
Let $G$ be a finitely generated torsion-free nilpotent pro-$p$ group of nilpotency class $k$, and let $F \in Mod^{t.d.}(\mathbb{Z}_p \llbracket G \rrbracket)$ be a free profinite $\mathbb{Z}_p \llbracket G \rrbracket$-module. If $H_{\mathbb{Z}_p}^n(G,F) \neq 0$, then $k \leq n$ and $G$ has rank $\leq n$.
\end{lem}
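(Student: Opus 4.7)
By Lemma \ref{tfnilpot}, $G$ is poly-(torsion-free procyclic), so it has a well-defined Hirsch length $h$, which for such groups coincides with its profinite rank (both invariants are additive under extensions by torsion-free procyclic factors). The plan is to induct on $h$. The base case $h = 0$ is vacuous: $G$ is trivial, so $k = 0$, and $H_{\mathbb{Z}_p}^n(1,F) = 0$ for $n \neq 0$, forcing $n \geq 0 = k$ and $\operatorname{rank}(G) = 0 \leq n$.

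For the inductive step, set $Z := Z_1(G)$. Since $G$ is a non-trivial nilpotent group, $Z$ is non-trivial; by Lemma \ref{fgnilpot} it is topologically finitely generated; and the argument in the proof of Lemma \ref{tfnilpot} shows that the entire upper central series of $G$ has torsion-free factors, so both $Z$ and $G/Z$ are torsion-free. Hence $Z \cong \mathbb{Z}_p^s$ for some $s \geq 1$, and $G/Z$ is a finitely generated torsion-free nilpotent pro-$p$ group of class $k-1$ and Hirsch length $h-s$.

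Next I would invoke the Lyndon--Hochschild--Serre spectral sequence for the normal subgroup $Z \lhd G$ in the profinite / t.d.\ Polish framework of Section \ref{tdmod}:
\[ E_2^{p,q} = H_{\mathbb{Z}_p}^p(G/Z, H_{\mathbb{Z}_p}^q(Z,F)) \;\Longrightarrow\; H_{\mathbb{Z}_p}^{p+q}(G,F). \]
Choosing a closed transversal of $Z$ in $G$ shows that $\mathbb{Z}_p \llbracket G \rrbracket$ is free as a right $\mathbb{Z}_p \llbracket Z \rrbracket$-module, and hence $F$ is free over $\mathbb{Z}_p \llbracket Z \rrbracket$. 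Choosing topological generators $z_1, \ldots, z_s$ of $Z$ identifies $\mathbb{Z}_p \llbracket Z \rrbracket$ with the formal power series ring $\mathbb{Z}_p \llbracket T_1, \ldots, T_s \rrbracket$ (via $T_i = z_i - 1$), and the associated Koszul resolution of $\mathbb{Z}_p$ of length $s$ yields
\[ H_{\mathbb{Z}_p}^q(Z,F) = \begin{cases} F_Z := F \hat{\otimes}_{\mathbb{Z}_p \llbracket Z \rrbracket} \mathbb{Z}_p & \text{if } q = s, \\ 0 & \text{if } q \neq s, \end{cases} \]
with $F_Z$ inheriting the structure of a free $\mathbb{Z}_p \llbracket G/Z \rrbracket$-module. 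The spectral sequence therefore collapses to an isomorphism $H_{\mathbb{Z}_p}^n(G,F) \cong H_{\mathbb{Z}_p}^{n-s}(G/Z, F_Z)$.

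If $H_{\mathbb{Z}_p}^n(G,F) \neq 0$ then $H_{\mathbb{Z}_p}^{n-s}(G/Z,F_Z) \neq 0$, and the inductive hypothesis applied to $G/Z$ gives $k - 1 \leq n - s$ and $h - s = \operatorname{rank}(G/Z) \leq n - s$; using $s \geq 1$, we then get $k \leq n - s + 1 \leq n$ and $\operatorname{rank}(G) = h \leq n$, closing the induction. The main obstacle will be the cohomological calculation over $Z$: one has to confirm that the Koszul resolution of $\mathbb{Z}_p$ over $\mathbb{Z}_p \llbracket Z \rrbracket$ is valid in the t.d.\ Polish module setting of Section \ref{tdmod}, and that the coinvariants $F_Z$ genuinely retain a free profinite $\mathbb{Z}_p \llbracket G/Z \rrbracket$-module structure, so that the induction can be iterated; the remainder of the argument is formal once these module-theoretic facts are in place.
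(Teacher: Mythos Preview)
Your inductive argument via the Lyndon--Hochschild--Serre spectral sequence is correct, but the paper takes a shorter path: it simply observes that a finitely generated torsion-free nilpotent pro-$p$ group of Hirsch length $m$ has $\cd_{\mathbb{Z}_p} G = m$ and is a Poincar\'e duality group at $p$ of dimension $m$ (by \cite[Proposition 4.3.1, Theorem 5.1.9]{S-W}), whence $H_{\mathbb{Z}_p}^i(G,F)=0$ for every $i\neq m$ and every free $F$, forcing $n=m$; then $k\le m$ and $\operatorname{rank}(G)\le m$ follow at once from the poly-$\mathbb{Z}_p$ structure. In effect you are re-deriving this Poincar\'e duality vanishing by hand, one central $\mathbb{Z}_p^s$ at a time, via the Koszul complex. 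That buys you independence from the cited duality theorem at the cost of the module-theoretic checks you flag (that the Koszul computation and the freeness of $F_Z$ over $\mathbb{Z}_p\llbracket G/Z\rrbracket$ survive in the profinite/t.d.\ setting); the paper's route avoids these by quoting the duality package wholesale. One small overstatement: you assert that Hirsch length \emph{equals} rank, but you only need (and only use) the inequality $\operatorname{rank}(G)\le h$, which is all the paper claims as well.
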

\begin{proof}
Suppose $G$ has Hirsch length $m$. Note that $k \leq m$ by Lemma \ref{tfnilpot}. Then Lemma \ref{tfnilpot} gives also that $cd_{\mathbb{Z}_p} G = m$, by \cite[Proposition 4.3.1]{S-W}, and that $G$ is a Poincar\'{e} duality group in dimension $m$ by \cite[Theorem 5.1.9]{S-W}. Hence $H_{\mathbb{Z}_p}^i(G,F) = 0$ for $i \neq m$, and so $m=n \geq k$. $G$ is built, by extensions, out of $n$ groups of rank $1$, so $G$ has rank $\leq n$, by repeated applications of \cite[Proposition 8.1.1(b)]{Wilson}.
\end{proof}

\begin{lem}
\label{subnormalcohom}
Let $G$ be a profinite group, and suppose $M \in Mod^{t.d.}(R \llbracket G \rrbracket)$ such that
$H_R^n(G, M) \neq 0$ for some $i$. If $H$ is a subnormal subgroup of $G$, there is some $i \leq n$ such
that $H_R^i(H, M) \neq 0$.
\end{lem}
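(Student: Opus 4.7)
The natural approach is induction on the length of a subnormal chain $H = H_0 \lhd H_1 \lhd \cdots \lhd H_k = G$, which reduces the claim to the case where $H$ is normal in $G$. In the normal case the plan is to run the Lyndon–Hochschild–Serre spectral sequence
\[
E_2^{p,q} = H_R^p(G/H, H_R^q(H,M)) \Longrightarrow H_R^{p+q}(G,M),
\]
which is available in Boggi's setting (Section \ref{tdmod}) since $M \in Mod^{t.d.}(R \llbracket G \rrbracket)$. Suppose for contradiction that $H_R^q(H,M) = 0$ for every $q \leq n$. Then $E_2^{p,q} = 0$ whenever $q \leq n$, so the same vanishing propagates to $E_\infty^{p,q}$. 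The module $H_R^n(G,M)$ carries a filtration whose successive quotients are the $E_\infty^{p,q}$ with $p+q = n$; but any such pair has $q \leq n$, so every associated graded piece vanishes and $H_R^n(G,M) = 0$, contradicting the hypothesis. Hence some $i \leq n$ satisfies $H_R^i(H,M) \neq 0$.

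For the inductive step, given a subnormal chain $H = H_0 \lhd H_1 \lhd \cdots \lhd H_k = G$ and the hypothesis $H_R^n(G,M) \neq 0$, apply the normal case to $H_{k-1} \lhd G$ to obtain some $i_{k-1} \leq n$ with $H_R^{i_{k-1}}(H_{k-1},M) \neq 0$. Note that $M$ remains a totally disconnected Polish $R \llbracket H_{k-1} \rrbracket$-module on restriction, so the hypotheses are preserved. Now $H$ is subnormal in $H_{k-1}$ via a chain of length $k-1$, and the inductive hypothesis produces $i \leq i_{k-1} \leq n$ with $H_R^i(H,M) \neq 0$, completing the induction. The base case $k=0$ is vacuous ($H = G$).

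The main obstacle is ensuring the spectral sequence is legitimately available in the t.d.\ Polish framework and that restriction of coefficients behaves well at each stage of the induction; both are handled by the machinery of Section \ref{tdmod} and \cite{Boggi}, so no additional work is needed. The dimension-counting argument on the $E_\infty$-page is essentially the only computation, and it is the same kind of filtration argument one would use for abstract groups.
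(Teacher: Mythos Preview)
Your proposal is correct and follows essentially the same approach as the paper: both argue by induction along a subnormal chain, reducing to the normal case and then invoking the Lyndon--Hochschild--Serre spectral sequence from \cite{Boggi}. You have simply spelled out the filtration argument on the $E_\infty$-page and the restriction-of-coefficients point that the paper leaves implicit.
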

\begin{proof}
For $H$ normal, we use the Lyndon-Hochschild-Serre spectral sequence \cite[6.17]{Boggi}. For $H$ subnormal, we have a sequence $$H = G_m \lhd G_{m-1} \lhd \cdots \lhd G_0 = G,$$ and we use the spectral sequence repeatedly to show that for each $0 \leq k \leq m$ there is some $n_k \leq n$ such that $H_R^{n_k}(G_k, M) \neq 0$.
\end{proof}

\begin{lem}
\label{subnormal}
Every subgroup $H$ of a profinite nilpotent group $G$ is subnormal.
\end{lem}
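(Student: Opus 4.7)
The plan is to mimic the standard abstract proof that every subgroup of a nilpotent group is subnormal, using the upper central series of $G$ as a scaffold, and then check that each subgroup in the resulting chain is automatically closed in the profinite topology. The preceding lemma tells us that the upper central series consists of closed subgroups, which is exactly the point we need to stay within the category of closed subgroups.

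Let $k$ be the nilpotency class of $G$, so that $Z_k(G) = G$ and each $Z_j(G)$ is closed in $G$. For $0 \le i \le k$ define
\[
H_i \;=\; H \cdot Z_{k-i}(G).
\]
First I would check that each $H_i$ is a closed subgroup of $G$. Because $Z_{k-i}(G)$ is normal in $G$ we have $H \cdot Z_{k-i}(G) = Z_{k-i}(G) \cdot H$, so $H_i$ is abstractly a subgroup; and it is closed because it is the image of the compact set $H \times Z_{k-i}(G)$ under the continuous multiplication map $G \times G \rightarrow G$, and images of compact sets in a Hausdorff space are closed. By construction $H_0 = G$ and $H_k = H$.

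Next I would verify $H_{i+1} \lhd H_i$, which (since $H_{i+1}$ is closed and the subgroups are already known to be closed) reduces to the abstract normality statement $[H_i, H_{i+1}] \subseteq H_{i+1}$. Expanding via the identities $[AB, CD] \subseteq [A,C][A,D][B,C][B,D]$ modulo the relevant normal subgroups, one has to check four inclusions:
\[
[H,H] \subseteq H, \quad [H, Z_{k-i-1}(G)] \subseteq Z_{k-i-2}(G),
\]
\[
[Z_{k-i}(G), H] \subseteq Z_{k-i-1}(G), \quad [Z_{k-i}(G), Z_{k-i-1}(G)] \subseteq Z_{k-i-1}(G),
\]
all of which follow at once from $[G, Z_j(G)] \subseteq Z_{j-1}(G)$. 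Each of the four pieces lies inside $H \cdot Z_{k-i-1}(G) = H_{i+1}$, so the inclusion $[H_i, H_{i+1}] \subseteq H_{i+1}$ holds and the chain
\[
H = H_k \lhd H_{k-1} \lhd \cdots \lhd H_1 \lhd H_0 = G
\]
exhibits $H$ as subnormal.

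There is no real obstacle here beyond the bookkeeping: the only profinite ingredient is that $H \cdot Z_j(G)$ is a closed subgroup, which uses compactness plus the normality of $Z_j(G)$, and that each $Z_j(G)$ is itself closed, which is the content of the preceding lemma. Everything else is the purely algebraic argument, which goes through unchanged because $G$ is nilpotent as an abstract group.
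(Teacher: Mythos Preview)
Your proof is correct and follows essentially the same approach as the paper: both use the chain $H \leq HZ_1(G) \leq \cdots \leq HZ_k(G) = G$ and verify normality of each step via the relation $[Z_{j+1}(G), G] \subseteq Z_j(G)$. You additionally check explicitly that each $HZ_j(G)$ is closed (via compactness), a point the paper leaves implicit under its standing convention that all subgroups are closed.
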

\begin{proof}
Consider the upper central series of $G$, $$1 \lhd Z_1(G) \lhd \cdots \lhd Z_k(G)=G.$$ Then $$H \leq HZ_1(G) \leq \cdots \leq HZ_k(G)=G$$ gives a subnormal series for $H$: to see that $HZ_i(G)$ is normal in $HZ_{i+1}(G)$, note that $H$ clearly normalises $HZ_i(G)$, and $Z_{i+1}(G)$ does because $$[Z_{i+1}(G),HZ_i(G)] \leq [Z_{i+1}(G),G] \leq Z_i(G) \leq HZ_i(G),$$ so $HZ_{i+1}(G)$ does too.
\end{proof}

For abstract groups, the Fitting subgroup is defined to be the join of the nilpotent normal subgroups. \cite[Section 8.4]{Wilson} defines a profinite Fitting subgroup of a profinite group $G$ as the inverse limit of the Fitting subgroups of the finite quotients of $G$; this is not the definition we will use. Instead we define the \emph{abstract Fitting subgroup} to be the abstract subgroup generated by the nilpotent normal closed subgroups of $G$.

\begin{prop}
\label{fitting}
Let $G$ be a torsion-free pro-$p$ group, $N$ its abstract Fitting subgroup, and $\bar{N} \geq N$ the closure of $N$ in $G$. If there is some free profinite $\mathbb{Z}_p \llbracket G \rrbracket$-module $F \in Mod^{t.d.}(\mathbb{Z}_p \llbracket G \rrbracket)$ such that $H_{\mathbb{Z}_p}^n(G,F) \neq 0$, then $\bar{N}$ is nilpotent of nilpotency class and rank $\leq n$.
\end{prop}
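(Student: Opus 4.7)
The plan is to first establish that every closed nilpotent normal subgroup $H \leq G$ has class and rank at most $n$, and then bootstrap this to $\bar N$. Fix such an $H$ and let $K \leq H$ be any finitely generated closed subgroup. By Lemma \ref{subnormal}, $K$ is subnormal in the nilpotent group $H$, and since $H \lhd G$, $K$ is subnormal in $G$. Restriction of $F$ along $\mathbb{Z}_p \llbracket K \rrbracket \hookrightarrow \mathbb{Z}_p \llbracket G \rrbracket$ yields a free $\mathbb{Z}_p \llbracket K \rrbracket$-module (using that $\mathbb{Z}_p \llbracket G \rrbracket$ is free as a $\mathbb{Z}_p \llbracket K \rrbracket$-module via a closed left transversal), which remains t.d.\ Polish. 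Lemma \ref{subnormalcohom} then produces some $i \leq n$ with $H_{\mathbb{Z}_p}^i(K,F) \neq 0$, so Lemma \ref{poincare} forces $K$ to have nilpotency class at most $i \leq n$ and rank at most $i \leq n$.

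Since the relation $[x_1,\ldots,x_{n+1}] = 1$ is determined by testing on arbitrary $(n+1)$-tuples, and every such tuple lies in a finitely generated closed subgroup of $H$, the group $H$ itself has class at most $n$. For the rank bound on $H$: if a closed subgroup $L \leq H$ had $d(L) > n$, pick $k_1,\ldots,k_{n+1} \in L$ whose images in $L/\Phi(L)$ are linearly independent over $\mathbb{F}_p$; the finitely generated closed subgroup $L' = \overline{\langle k_1,\ldots,k_{n+1}\rangle}$ satisfies $\Phi(L') \subseteq \Phi(L)$, so $L'/\Phi(L')$ surjects onto a subspace of $L/\Phi(L)$ of dimension $\geq n+1$, giving $d(L') \geq n+1$ and contradicting the bound on rank of finitely generated closed subgroups of $H$ established in the previous step.

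To pass to the Fitting subgroup, note that if $H_1, \ldots, H_r$ are closed nilpotent normal subgroups of $G$, then $H_1 \cdots H_r$ is the continuous image of the compact space $H_1 \times \cdots \times H_r$, hence closed; it is normal, and nilpotent by iterated application of Fitting's theorem, so the previous step applies to give it class at most $n$. Every $(n+1)$-tuple of elements of $N$ lies in such a product, whence $N$ is nilpotent of class at most $n$. Since $\{(x_1,\ldots,x_{n+1}) \in G^{n+1} : [x_1,\ldots,x_{n+1}] = 1\}$ is closed in $G^{n+1}$, this condition passes from the dense subgroup $N$ to its closure $\bar N$. Finally, $\bar N$ is itself a closed nilpotent normal subgroup of $G$, so applying the first step to $H = \bar N$ gives the rank bound on $\bar N$.

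The main obstacle is the first paragraph: guaranteeing that $F$ restricts to a free $\mathbb{Z}_p \llbracket K \rrbracket$-module so that Lemma \ref{poincare} becomes available, and upgrading the rank bound from finitely generated closed subgroups to arbitrary closed subgroups of $H$ via the Frattini-quotient argument. Once these are in hand, the transition from $H$ to $N$ to $\bar N$ is essentially a density/closedness manipulation together with Fitting's theorem.
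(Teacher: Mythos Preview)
Your proof is correct and follows essentially the same route as the paper's: reduce to finitely generated closed subgroups of closed nilpotent normal subgroups via Lemma \ref{subnormal} and Lemma \ref{subnormalcohom}, apply Lemma \ref{poincare}, use Fitting's theorem on finite products, and pass to the closure by continuity of the iterated commutator map. Your version fills in two points the paper leaves implicit (that $F$ restricts to a free profinite $\mathbb{Z}_p\llbracket K\rrbracket$-module, and the Frattini-quotient argument lifting the rank bound from finitely generated to arbitrary closed subgroups), and you recover the rank bound on $\bar N$ by reapplying the first step to $H=\bar N$ rather than, as the paper does, observing directly that $\bar N = N$.
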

\begin{proof}
We claim the join of any finite collection $N_1, \ldots, N_m$ of nilpotent normal closed subgroups of $G$ is nilpotent, normal and closed. Consider the abstract join $N'$ of these as subgroups of an abstract group: then it is known that $N'$ is nilpotent and normal. Moreover, because all the subgroups are normal, $N' = N_1 \cdots N_m$, which is closed in $G$, so $N'$ is the join of $N_1, \ldots, N_m$ as profinite subgroups of $G$, and we are done.

So we can see $N$ as the directed union of the nilpotent normal subgroups of $G$. Suppose $H$ is a finitely generated subgroup of $G$, generated by finitely many elements of $N$. Then $H$ is contained in a nilpotent normal subgroup of $G$ (and so it is also contained in $N$); hence it is finitely generated torsion-free nilpotent pro-$p$, and subnormal by Lemma \ref{subnormal}. So by Lemma \ref{subnormalcohom} $H_{\mathbb{Z}_p}^i(H,F) \neq 0$ for some $i \leq n$, and hence $H$ has nilpotency class and rank $\leq n$ by Lemma \ref{poincare}.

This holds for every finitely generated subgroup of $N$, so $N$ is nilpotent of class $\leq n$. Thus the continuous map $$\overbrace{N \times N \times \cdots \times N}^{n+1} \rightarrow \overbrace{[N,[N,[\cdots,N]\cdots]]}^{n+1}$$ has image $1$, and by continuity its closure $$\overbrace{\bar{N} \times \bar{N} \times \cdots \times \bar{N}}^{n+1} \rightarrow \overbrace{[\bar{N},[\bar{N},[\cdots,\bar{N}]\cdots]]}^{n+1}$$ also has image $1$. Therefore $\bar{N}$ is nilpotent of class $\leq n$ too, and it is normal because $N$ is, so by definition of $N$ we have $\bar{N} \leq N \Rightarrow \bar{N}=N$. Finally, we have shown that every finitely generated subgroup of $\bar{N}$ has rank $\leq n$, so $\bar{N}$ does too.
\end{proof}

One of the useful properties of the Fitting subgroup for abstract soluble groups is that it contains its own centraliser. The easiest way to show that the same property holds for profinite soluble groups is to show that the two are the same.

\begin{lem}
\label{fittingcentraliser}
Let $G$ be a profinite group and $N$ its abstract Fitting subgroup. Write $G^{\text{abs}}$ for $G$ considered as an abstract group, and let $N^{\text{abs}}$ be the Fitting subgroup of $G^{\text{abs}}$. Then, as (abstract) subgroups of $G$, $N = N^{\text{abs}}$. Thus, for $G$ soluble, $N$ contains its own centraliser in $G$.
\end{lem}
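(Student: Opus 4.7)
The plan is to establish the equality $N = N^{\text{abs}}$ by showing that every nilpotent normal abstract subgroup of $G$ lies in a closed nilpotent normal subgroup, namely its own closure, and then to quote the classical abstract result to conclude the centraliser statement.

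The inclusion $N \subseteq N^{\text{abs}}$ is immediate from the definitions, since any closed nilpotent normal subgroup of $G$ is \emph{a fortiori} a nilpotent normal subgroup of $G^{\text{abs}}$. For the reverse inclusion, it suffices to show that every nilpotent normal (abstract) subgroup $H \leq G^{\text{abs}}$ is contained in a closed nilpotent normal subgroup of $G$, and the natural candidate is $\bar{H}$. Normality of $\bar{H}$ in $G$ follows from the fact that for any $g \in G$, conjugation by $g$ is a homeomorphism of $G$, so $g\bar{H}g^{-1} = \overline{gHg^{-1}} = \bar{H}$. Nilpotency of $\bar{H}$ of the same class $k$ as $H$ is exactly the continuity argument used in Proposition \ref{fitting}: the iterated commutator map
$$\underbrace{H \times \cdots \times H}_{k+1} \longrightarrow G, \quad (h_0, \ldots, h_k) \longmapsto [h_0,[h_1,[\cdots,h_k]\cdots]]$$
is the restriction of a continuous map with values in the Hausdorff space $G$, and has image $\{1\}$; its unique continuous extension to $\bar{H}^{k+1}$ therefore also has image $\{1\}$, so $\bar{H}$ has nilpotency class at most $k$. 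Thus $\bar{H}$ is one of the closed nilpotent normal subgroups generating $N$, and $H \subseteq \bar{H} \subseteq N$, so $N^{\text{abs}} \subseteq N$.

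For the final statement, suppose $G$ is soluble. Then $G^{\text{abs}}$ is also soluble as an abstract group: since the profinite derived series terminates at $\{1\}$ and contains the abstract derived series termwise, the latter terminates as well. By the classical result for abstract soluble groups (\emph{cf.}\ Robinson, 5.4.4), the Fitting subgroup of an abstract soluble group contains its own centraliser, i.e.\ $C_{G^{\text{abs}}}(N^{\text{abs}}) \leq N^{\text{abs}}$. Since the centraliser of a subset of $G$ is the same whether computed in $G$ or in $G^{\text{abs}}$ (it is cut out by a set of equations), and since $N = N^{\text{abs}}$ as subsets of $G$ by what we have shown, this gives $C_G(N) \leq N$.

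The argument contains no real obstacle: the main point to get right is the simultaneous handling of the two notions of Fitting subgroup, and in particular the observation that taking closures both preserves nilpotency class (by continuity of iterated commutators) and preserves normality (by continuity of conjugation), which together reduce the abstract and profinite definitions to the same thing.
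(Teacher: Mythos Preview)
Your proof is correct and follows essentially the same approach as the paper: both directions of $N = N^{\text{abs}}$ are handled identically, with the key step being that the closure of a nilpotent normal abstract subgroup is again nilpotent (by continuity of the iterated commutator map) and normal. You supply slightly more detail than the paper does for the final clause, explicitly checking that $G^{\text{abs}}$ is abstractly soluble and citing the classical result; the paper leaves this implicit.
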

\begin{proof}
Every nilpotent normal closed subgroup $H$ of $G$ is a nilpotent normal abstract subgroup, so every such $H$ is contained in $N^{\text{abs}}$, and hence so is $N$, i.e. $N \leq N^{\text{abs}}$.

Suppose instead that $H$ is a nilpotent normal abstract subgroup of nilpotency class $i$. Then the closure $\bar{H}$ is a normal closed subgroup of $G$. As before, the continuous map $$\overbrace{H \times H \times \cdots \times H}^{i+1} \rightarrow \overbrace{[H,[H,[\cdots,H]\cdots]]}^{i+1}$$ has image $1$, and by continuity its closure $$\overbrace{\bar{H} \times \bar{H} \times \cdots \times \bar{H}}^{i+1} \rightarrow \overbrace{[\bar{H},[\bar{H},[\cdots,\bar{H}]\cdots]]}^{i+1}$$ also has image $1$, so $\bar{H}$ is nilpotent. Hence $H \leq \bar{H} \leq N$, and therefore $N^{\text{abs}} \leq N$.
\end{proof}

The following result corresponds roughly to \cite[Theorem B]{Krop3}, and answers \cite[Open Question 6.12.1]{R-Z} in the torsion-free case.

\begin{thm}
\label{frank}
Let $G$ be a virtually torsion-free soluble pro-$p$ group of type $FP_\infty$ over $\mathbb{Z}_p$. Then $G$ has finite rank.
\end{thm}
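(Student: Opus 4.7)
The plan is to reduce to the torsion-free case, combine the main cohomological bound in Corollary~\ref{neq0hf} with Proposition~\ref{fitting} to identify a large nilpotent normal subgroup of finite rank, and finally use $p$-adic analytic theory together with the self-centralising property of the Fitting subgroup to bound the quotient.

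First I would pick a torsion-free open subgroup $H \leq G$, which exists by hypothesis. Since $H$ has finite index, it is of type $\FP_\infty$ over $\mathbb{Z}_p$, and finite rank is closed under extensions by \cite[Proposition 8.1.1]{Wilson}, so it suffices to show that $H$ has finite rank. As $H$ is soluble pro-$p$, it is elementary amenable, hence $H \in \widehat{\cll'\clh_{\hat{\mathbb{Z}}}}\mathfrak{F}$ by Proposition~\ref{elemamen}, and thus $H \in \widehat{\cll'\clh_{\mathbb{Z}_p}}\mathfrak{F}$ by Lemma~\ref{Ralgebra}. Corollary~\ref{neq0hf} applied to $H$ produces some $n$ with $H_{\mathbb{Z}_p}^n(H, \mathbb{Z}_p \llbracket H \rrbracket) \neq 0$.

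Taking the free module $F = \mathbb{Z}_p \llbracket H \rrbracket$ in Proposition~\ref{fitting}, the closure $\bar{N}$ of the abstract Fitting subgroup of $H$ is nilpotent of nilpotency class and rank at most $n$. Since $H$ is soluble, Lemma~\ref{fittingcentraliser} gives $C_H(\bar{N}) \leq \bar{N}$, and the conjugation action of $H$ on $\bar{N}$ thus yields an embedding
\[
H/\bar{N} \hookrightarrow \operatorname{Out}(\bar{N}).
\]
Now $\bar{N}$ is a torsion-free nilpotent pro-$p$ group of finite rank, hence is compact $p$-adic analytic, so its continuous automorphism group is virtually pro-$p$ of finite rank: passing to a characteristic uniform open subgroup of $\bar{N}$ embeds $\operatorname{Aut}(\bar{N})$ as a closed subgroup of $GL_d(\mathbb{Z}_p)$ for some $d$. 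The image of $H/C_H(\bar{N})$ in $\operatorname{Aut}(\bar{N})$ is pro-$p$, so it lies (up to finite index) inside the pro-$p$ part of $\operatorname{Aut}(\bar{N})$, which has finite rank. Hence $H/C_H(\bar{N})$, and therefore $H/\bar{N}$, has finite rank. Combining with $\bar{N}$ of finite rank in the extension $\bar{N} \to H \to H/\bar{N}$, we conclude that $H$ has finite rank, and therefore so does $G$.

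The main obstacle I anticipate is the final linearisation step: showing that the pro-$p$ quotient $H/C_H(\bar{N})$ really does acquire finite rank inside $\operatorname{Aut}(\bar{N})$. This is a standard consequence of $p$-adic Lie theory once one knows $\bar{N}$ is $p$-adic analytic, but it does require a clean citation and perhaps a small argument to pin down a characteristic uniform subgroup of $\bar{N}$ on which $H$ acts. Everything else is a direct assembly of results already established in the paper.
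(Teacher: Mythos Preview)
Your proposal is correct and follows essentially the same argument as the paper: reduce to the torsion-free case, apply Corollary~\ref{neq0hf} and Proposition~\ref{fitting} to see that the (closure of the) Fitting subgroup $N$ is closed, nilpotent and of finite rank, then use $C_G(N)\leq N$ from Lemma~\ref{fittingcentraliser} together with the finite rank of $\operatorname{Aut}(N)$ to bound the quotient. The paper dispatches your anticipated obstacle in one line by citing \cite[Theorem~5.7]{DDMS} for the finite rank of $\operatorname{Aut}(N)$, and also explicitly records (which you omit) that the $\FP_\infty$ hypothesis forces $H$ to be countably based, so that $\mathbb{Z}_p\llbracket H\rrbracket$ is genuinely t.d.\ Polish and Proposition~\ref{fitting} applies.
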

\begin{rem}
We do not need to assume in addition that $G$ is countably based; it follows from the $FP_\infty$ hypothesis by \cite[Remark 3.5(c)]{Myself2}.
\end{rem}
\begin{proof}
We can assume $G$ is torsion-free: if it isn't, take a finite index torsion-free subgroup. Write $N$ for the abstract Fitting subgroup of $G$. Then by Corollary \ref{neq0hf} there is some $n$ such that $H_{\mathbb{Z}_p}^n(G,\mathbb{Z}_p \llbracket G \rrbracket) \neq 0$. Now $\mathbb{Z}_p$ and $G$ are countably based, so $\mathbb{Z}_p \llbracket G \rrbracket$ is too, and hence it is a t.d. Polish $\mathbb{Z}_p \llbracket G \rrbracket$-module; thus Proposition \ref{fitting} gives us that $N$ is closed and has finite rank. Then, writing $C_G(N)$ for the centraliser of $N$ in $G$, we have a monomorphism $G/C_G(N) \rightarrow Aut(N)$, and $Aut(N)$ has finite rank by \cite[Theorem 5.7]{DDMS}, so $G/C_G(N)$ has finite rank too. But by Lemma \ref{fittingcentraliser} $C_G(N) \leq N$ has finite rank, so $G$ does.
\end{proof}

We observe that, by \cite[Proposition 4.2]{Myself2}, Theorem \ref{frank} has the following converse: Suppose $G$ is a soluble pro-$p$ group of finite rank. Then $G$ is virtually torsion-free of type $\FP_\infty$ over $\mathbb{Z}_p$.


\end{document}